\def\ge{\geqslant}
\def\le{\leqslant}
\def\a{\alpha}
\def\b{\beta}
\def\D{\Delta}
\def\L{\Lambda}
\def\s{\sigma}
\def\l{\lambda}
\def\i{^{-1}}
\def\<{\langle}
\def\>{\rangle}
\newcommand{{\BG}}{\ensuremath{\mathbb {G}}\xspace}
\newcommand{{\BK}}{\ensuremath{\mathbb {K}}\xspace}
\newcommand{\BN}{\ensuremath{\mathbb {N}}\xspace}
\newcommand{\BZ}{\ensuremath{\mathbb {Z}}\xspace}
\newcommand{\CB}{\ensuremath{\mathcal {B}}\xspace}
\newcommand{\CD}{\ensuremath{\mathcal {D}}\xspace}
\newcommand{\CP}{\ensuremath{\mathcal {P}}\xspace}
\newcommand{\CY}{\ensuremath{\mathcal {Y}}\xspace}
\def\tw{\tilde w}
\def\tW{\tilde W}
\def\kk{\mathbf k}
\newtheorem{theorem}{Theorem}
\newtheorem{prop}[theorem]{Proposition}
\newtheorem{proposition}[theorem]{Proposition}
\newtheorem{lem}[theorem]{Lemma}
\newtheorem{lemma}[theorem]{Lemma}
\newtheorem{cor}[theorem]{Corollary}
\theoremstyle{definition}
\newtheorem{remark}[theorem]{Remark}
\numberwithin{equation}{section}
\numberwithin{theorem}{section}
\renewcommand{\to}{%
   \ifbool{@display}{\longrightarrow}{\rightarrow}%
   }
\let\shortmapsto\mapsto
\renewcommand{\mapsto}{%
   \ifbool{@display}{\longmapsto}{\shortmapsto}%
   }
\newlength{\olen}
\newlength{\ulen}
\newlength{\xlen}
\newcommand{\xra}[2][]{%
   \ifbool{@display}%
      {\settowidth{\olen}{$\overset{#2}{\longrightarrow}$}%
       \settowidth{\ulen}{$\underset{#1}{\longrightarrow}$}%
       \settowidth{\xlen}{$\xrightarrow[#1]{#2}$}%
       \ifdimgreater{\olen}{\xlen}%
          {\underset{#1}{\overset{#2}{\longrightarrow}}}%
          {\ifdimgreater{\ulen}{\xlen}%
             {\underset{#1}{\overset{#2}{\longrightarrow}}}
             {\xrightarrow[#1]{#2}}}}%
      {\xrightarrow[#1]{#2}}
   }
\newcommand{\xyra}[2][]{%
   \settowidth{\xlen}{$\xrightarrow[#1]{#2}$}%
   \ifbool{@display}%
      {\settowidth{\olen}{$\overset{#2}{\longrightarrow}$}%
       \settowidth{\ulen}{$\underset{#1}{\longrightarrow}$}%
       \ifdimgreater{\olen}{\xlen}%
          {\mathrel{\xymatrix@M=.12ex@C=3.2ex{\ar[r]^-{#2}_-{#1} &}}}%
          {\ifdimgreater{\ulen}{\xlen}%
             {\mathrel{\xymatrix@M=.12ex@C=3.2ex{\ar[r]^-{#2}_-{#1} &}}}
             {\mathrel{\xymatrix@M=.12ex@C=\the\xlen{\ar[r]^-{#2}_-{#1} &}}}}}%
      {\mathrel{\xymatrix@M=.12ex@C=\the\xlen{\ar[r]^-{#2}_-{#1} &}}}%
   }
\newcommand{\xla}[2][]{%
   \ifbool{@display}%
      {\settowidth{\olen}{$\overset{#2}{\longleftarrow}$}%
       \settowidth{\ulen}{$\underset{#1}{\longleftarrow}$}%
       \settowidth{\xlen}{$\xleftarrow[#1]{#2}$}%
       \ifdimgreater{\olen}{\xlen}%
          {\underset{#1}{\overset{#2}{\longleftarrow}}}%
          {\ifdimgreater{\ulen}{\xlen}%
             {\underset{#1}{\overset{#2}{\longleftarrow}}}
             {\xleftarrow[#1]{#2}}}}%
      {\xleftarrow[#1]{#2}}
   }
\newcommand{\isoarrow}{%
   \ifbool{@display}{\overset{\sim}{\longrightarrow}}{\xrightarrow\sim}%
   }
  \newcommand{\leJ}{ {\, {}^J \!\! \le \,} }
  \newcommand{\leI}{ {\, {}^{I^\flat} \!\! \le \,} }
\begin{document}

\title[]{A Birkhoff-Bruhat Atlas for partial flag varieties}

\author[Huanchen Bao]{Huanchen Bao}
\address{Department of Mathematics, National University of Singapore, Singapore.}
\email{huanchen@nus.edu.sg}

\author[Xuhua He]{Xuhua He}
\address{The Institute of Mathematical Sciences and Department of Mathematics, The Chinese University of Hong Kong, Shatin, N.T., Hong Kong SAR, China}
\email{xuhuahe@math.cuhk.edu.hk}
\thanks{}

\keywords{Partial flag varieties, projected Richardson varieties, Kac-Moody groups, Bruhat atlas}
\subjclass[2010]{14M15, 20F55, 20G44}

\maketitle

%{\centering\footnotesize {\em To Tonny Springer with admiration\par}}

\begin{abstract}
A partial flag variety $\CP_K$ of a Kac-Moody group $G$ has a natural stratification into projected Richardson varieties. When $G$ is a connected reductive group, a Bruhat atlas for $\CP_K$ was constructed in \cite{HKL}: $\CP_K$ is locally modeled with Schubert varieties in some Kac-Moody flag variety as stratified spaces. The existence of Bruaht atlases implies some nice combinatorial and geometric properties on the partial flag varieties and the decomposition into projected Richardson varieties. 

A Bruhat atlas does not exist for partial flag varieties of an arbitrary Kac-Moody group due to combinatorial and geometric reasons. To overcome obstructions, we introduce the notion of Birkhoff-Bruhat atlas. Instead of the Schubert varieties used in a Bruhat atlas, we use the $J$-Schubert varieties for a Birkhoff-Bruhat atlas. The notion of the $J$-Schubert varieties interpolates Birkhoff decomposition and Bruhat decomposition of the full flag variety (of a larger Kac-Moody group). The main result of this paper is the construction of a Birkhoff-Bruhat atlas for any partial flag variety $\CP_K$ of a Kac-Moody group. We also construct a combinatorial atlas for the index set $Q_K$ of the projected Richardson varieties in $\CP_K$. As a consequence, we show that $Q_K$ has some nice combinatorial properties. This gives a new proof and generalizes the work of Williams \cite{LW} in the case where the group $G$ is a connected reductive group. 
\end{abstract}

\section{Introduction}

\subsection{The flag variety and its decomposition into Richardson varieties}

Let $G$ be a connected reductive group and $\CB$ be the full flag variety of $G$. An open Richardson variety is the intersection of a Bruhat cell with an opposite Bruhat cell. We then have the decomposition of $\CB$ into the disjoint union of the open Richardson varieties. This decomposition has many remarkable properties, including:
\begin{enumerate}
    \item each stratum is smooth; 
    \item the closure of each stratum is a union of other strata; 
    \item the closure of each stratum is normal, Cohen-Macaulay, with rational singularities;
    \item over positive characteristic, there exists a Frobenius splitting on $\CB$ which compatibly splits all the strata;
    \item over complex numbers, there exists a Poisson structure on $\CB$ for which the $T$-leaves are exactly the strata;
    \item over real numbers, the intersection of the totally nonnegative flag variety $X_{\ge 0}$ with each stratum gives a cellular decomposition of $X_{\ge 0}$; 
    \item the poset of the strata is thin and EL-shellable.
\end{enumerate}

Many of these remarkable properties remain valid for the full flag variety of any Kac-Moody group. 

\subsection{Partial flag varieties} Now we consider the partial flag variety $\CP_K=G/P_K$ of a Kac-Moody group $G$. The variety $\CP_K$ has a natural stratification into the projected Richardson varieties. The projected Richardson varieties in $\CP_K$ are the image of certain Richardson varieties in the full flag $\CB$ under the projection map $\pi: \CB \to \CP_K$. However, the combinatorial and geometric structures of the projected Richardson varieties in $\CP_K$ are more complicated than the Richardson varieties in $\CB$. 

Let $Q_K$ be the index set of the projected Richardson varieties in $\CP_K$ and $\preceq$ be the partial order on $Q_K$.  Williams  \cite{LW} showed that if $G$ is a connected reductive group, then the partial order set $Q_K$ has remarkable combinatorial properties: thinness, shellability, etc. Such combinatorial properties are used later by Galashin, Karp and Lam \cite{GKL} to prove the conjecture of Postnikov and Williams that the totally nonnegative part of $\CP_K$ is a regular CW complex.

\subsection{The Bruhat atlas of \cite{HKL}} One of the motivations in the unpublished work of Knutson, Lu and the second-named author \cite{HKL} is to use the Richardson varieties in the full flag variety $\tilde \CB$ of a ``much larger'' Kac-Moody group $\tilde G$ as the model for the decomposition of $\CP_K$ into projected Richardson varieties, and many other stratified spaces arising in Lie theory. Consequently, many remarkable combinatorial properties and geometric properties on these stratified spaces may be deduced directly from those on the Bruhat order of the Weyl group $\tilde W$ of $\tilde G$ and the Richardson varieties of $\tilde \CB$. 

By definition, a {\it Bruhat atlas} for a stratified space $M=\sqcup_y \mathring{M}_y$ consists of a large Kac-Moody group $\tilde{G}$ and an open covering $M=\cup U$, such that 
\begin{itemize}
    \item for each $U$, there exists an isomorphism of stratified spaces from $U$ to a Schubert cell in the flag variety $\tilde{\CB}$ of $\tilde{G}$;
    \item for each $y$ and $U$, $U \cap \mathring{M}_y$ is mapped isomorphically to an open Richardson variety of $\tilde \CB$.
\end{itemize}

The group $\tilde{G}$ is called the {\it atlas group} for this Bruhat atlas. 

The first example of a Bruhat atlas was constructed by Snider \cite{Sn} for Grassmannian with the positroid stratification. A Bruhat atlas for the full flag variety of a connected reductive group $G$ was constructed by Knutson, Woo, and Yong in \cite{KWY}. A Bruhat atlas for any partial flag variety of a connected reductive group and a Bruhat atlas for the wonderful compactification of a semisimple adjoint group was then constructed in \cite{HKL}. A different ``atlas'' for the partial flag varieties of a connected reductive group was constructed recently by Galashin, Karp and Lam in \cite{GKL} and by Huang in \cite{Hu1}. A Bruhat atlas for the wonderful compactification of the symmetric space $PSO(2n)/SO(n-1)$ was recently given by Huang in \cite{Hu2}.

\subsection{The Birkhoff-Bruhat atlas} In the works discussed above, the group $G$ involved is a connected reductive group, i.e. a Kac-Moody group of finite type. 

A Bruhat atlas for $\CP_K$ does not exists when $G$ is of infinite type, due to the following reasons. First, the partial flag variety $\CP_K$, in general, are infinite-dimensional. Thus one can not use Schubert cells (which is finite-dimensional) as an atlas for $\CP_K$. This gives a geometric obstruction. There is also a combinatorial obstruction arising from comparison of the partial orders. Note that the partial order $\preceq$ on $Q_K$ involved both the Bruhat order and the opposite Bruhat order in the Weyl group $W$ of $G$. By the definition of Bruhat atlas, one needs to embed $Q_K$ into the Weyl group $\tW$ of an atlas group. It is only possible if the Weyl group $W$ has the longest element, which interchanges the Bruhat order and the opposite Bruhat order on $W$. 

The main purpose of this paper is to introduce a suitable ``atlas model'' for the partial flag varieties of any Kac-Moody group. We use the open $J$-Schubert cells   in the full flag variety $\tilde \CB$ of an atlas group $\tilde G$ instead of the Schubert cells in $\tilde \CB$ as in the original definition of Bruhat atlas.

The decomposition of $\tilde \CB$ into the $J$-Schubert cell was introduced by Billig and Dyer in \cite{BD}, which simultaneously generalizes both the Bruhat decomposition of $\CB$ into the Schubert cells and the Birkhoff decomposition of $\CB$ into the opposite Schubert cells. A $J$-Schubert cell, in general, is neither finite dimensional nor finite codimensional. A Birkhoff-Bruhat atlas for $\CP_K$ consists of an atlas group $\tilde{G}$ and an open covering $\CP_K=\cup U$ such that 
\begin{itemize}
    \item  for each $U$, an embedding of $U$ into the full flag variety $\tilde \CB$ of $\tilde{G}$;
    \item the intersection of $U$ with any projected Richardson variety is mapped isomorphically to a $J$-Richardson variety in $\tilde \CB$. 
\end{itemize}

The main result of this paper is 

\begin{theorem}
    Any partial flag variety $\CP_K$ of a Kac-Moody group admits a Birkhoff-Bruhat atlas. 
\end{theorem}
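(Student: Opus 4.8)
The plan is to build the atlas one chart at a time, covering $\CP_K$ by the $W^K$-translates of the opposite big cell and matching each such chart, together with its induced projected-Richardson stratification, against a single open $J$-Schubert cell in $\tilde\CB$ carrying its $J$-Richardson stratification. The torus-fixed points of $\CP_K = G/P_K$ are indexed by the minimal-length coset representatives $W^K$ for $W/W_K$; for each representative $w$ the corresponding attracting set $U_w$ (the $w$-translate of the opposite big cell $U^- P_K/P_K$) is an affine chart, and the family $\{U_w\}$ is an open cover. It therefore suffices to produce an atlas group $\tilde G$, a distinguished subset $J$ of its simple reflections, and for each $w$ an embedding $U_w \hookrightarrow \tilde\CB$ that is an isomorphism of stratified spaces onto an open $J$-Schubert cell and carries each intersection $U_w \cap \pi(\mathring{R}_{v,u})$ isomorphically onto a $J$-Richardson variety.

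First I would settle the combinatorial side, which I expect to govern everything else: construct $\tilde G$ and $J$ so that the index poset $(Q_K, \preceq)$ is realized as an interval in the $J$-Bruhat order $(\tilde W, \leJ)$ of Billig--Dyer \cite{BD}. The Dynkin data of $\tilde G$ should be assembled from that of $G$ together with the combinatorics of $K$ and of an opposite copy, with $J$ chosen to be exactly the simple reflections governing the Birkhoff (opposite-Bruhat) directions, so that $\leJ$ interpolates: it restricts to the ordinary Bruhat order in the complementary directions and to the opposite order in the $J$-directions. Since $\preceq$ mixes the Bruhat and opposite-Bruhat orders of $W$, this interpolating order is precisely what is needed, and verifying that the resulting map $Q_K \to \tilde W$ is an order isomorphism onto an interval is the combinatorial atlas promised in the introduction.

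Next, on the geometric side, I would exploit the local product structure of the $J$-Schubert cells. Such a cell decomposes as a product of a finite-dimensional factor behaving like a Bruhat cell and a finite-codimensional factor behaving like a Birkhoff (opposite Schubert) cell, and the intersection with an opposite cell, i.e.\ a $J$-Richardson variety, splits compatibly. On the $\CP_K$ side, the chart $U_w$ carries an analogous product decomposition of its projected-Richardson stratification coming from $P_K$ and the opposite unipotent radical. I would define the embedding $U_w \hookrightarrow \tilde\CB$ so that these two product structures match factor by factor, using the combinatorial identification of the previous step to pin down the target cell and the correspondence of strata; checking that the map is an isomorphism onto its image then reduces to comparing coordinate rings, and checking that strata correspond reduces to the order isomorphism already established.

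The main obstacle will be the construction of $\tilde G$ and $J$ together with the proof that $(\tilde W, \leJ)$ reproduces $(Q_K, \preceq)$. In the finite-type Bruhat atlas of \cite{HKL} the longest element $w_0$ converts the opposite-Bruhat order into an ordinary Bruhat order, allowing $Q_K$ to be embedded into a single Bruhat poset; for a general Kac--Moody group no such $w_0$ exists, which is exactly the combinatorial obstruction flagged in the introduction. The interpolating $J$-order is introduced precisely to circumvent this, but its order-theoretic behavior is more delicate than that of the Bruhat order, so the interval identification and the attendant thinness and shellability statements must be proved directly rather than transported from \cite{HKL}. A secondary, analytic difficulty is that $J$-Schubert cells are in general neither finite-dimensional nor finite-codimensional, so one must ensure the chart embeddings and the comparison of coordinate rings are well posed in the pro-variety setting, so that \emph{isomorphism of stratified spaces} retains its intended meaning.
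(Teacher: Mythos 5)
Your chart cover (the $W^K$-translates of the opposite big cell around the $T$-fixed points), the doubled Dynkin diagram glued along $K$ with $J$ the copy receiving the Birkhoff directions, and the use of the Billig--Dyer twisted order to interpolate the two orders all match the paper's strategy. But your plan has a genuine gap at its center: you require each chart $U_r$ to map isomorphically, \emph{as a stratified space, onto a single open $J$-Schubert cell}. This is strictly stronger than what a Birkhoff--Bruhat atlas demands (only an embedding of $U_r$ into $\tilde\CB$ carrying each stratum isomorphically onto an open $J$-Richardson variety -- the chart image itself is not required to be a cell), and it is precisely the requirement that fails in general. In the paper's construction (Theorem~\ref{thm:BBatlas1}), the image of the chart is ${}^{I^\flat} \mathring{\tilde X}_{\tilde \nu(r, r)} \cap \tilde L_{I^\flat} \tilde L_{I^\sharp} \tilde B^+/\tilde B^+$, i.e.\ the intersection of one \emph{opposite} $I^\flat$-Schubert cell with the product-of-parabolics locus; by Proposition~\ref{prop:intersection} the cell ${}^{I^\flat}\mathring{\tilde X}_{\tilde \nu(r,r)}$ meets ${}^{I^\flat}\mathring{\tilde X}^{\tilde u}$ for \emph{every} $\tilde u$ with $\tilde\nu(r,r) \leI \tilde u$, whereas the chart only produces the strata with $\tilde u \in \tW_{I^\flat}\tW_{I^\sharp}$ (Proposition~\ref{prop:tW}(2)), so the image is in general a proper subvariety of the cell. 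The construction in which a chart genuinely is a full open $J$-Schubert cell is the paper's second atlas (Theorem~\ref{thm:BBatlas2} in Section~\ref{sec:2nd-atlas}), and it requires $W_K$ to be finite: both the gluing of the two copies of $I$ (along $-w_K$) and the chart map $\breve c_r$ use the longest element $w_K$ of $W_K$. Your plan as stated therefore reintroduces, at the chart level, the very obstruction (absence of a longest element) that the Birkhoff--Bruhat notion was designed to circumvent, and it would break exactly in the new cases -- $W_K$ infinite -- that the theorem is about.

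The same issue surfaces in your combinatorial step: $(Q_K,\preceq)$ is \emph{not} realized as an interval in the twisted order, and the stratum poset of a chart is not a principal down-set $\{v \mid v \leJ \tilde w\}$, which is what an identification with a $J$-Schubert cell would force. What is true (Proposition~\ref{prop:tW}) is that $\tilde\nu$ identifies $Q_K$ with the \emph{convex subset} $\{\tilde{w} \in \tW_{I^\flat} \tW_{I^\sharp} \mid r^\flat (r^{-1})^\sharp \leI \tilde{w} \text{ for some } r \in W^K\}$ of $(\tW,\leI)$; this subset has one minimal element per chart and in general no maximum, and each chart's strata are indexed by the up-set of $\tilde\nu(r,r)$ intersected with $\tW_{I^\flat}\tW_{I^\sharp}$. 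Repairing your outline means weakening ``isomorphism onto a $J$-Schubert cell / interval'' to ``embedding with strata onto $J$-Richardson varieties / convex subset''; after that, the remaining content -- the product decompositions (Lemmas~\ref{lem:JXw}, \ref{lem:+LL}, \ref{lem:J1B} and Propositions~\ref{prop:LL1}, \ref{prop:LL}, \ref{prop:-LL}), the splitting $\sigma_{r,\pm}$ of $g$ into positive and negative parts, and the explicit chart formula $\tilde c_r(g\dot r P^+_K/P^+_K)=\s_{r,+}^{\flat}(g)\cdot \dot r^{\flat}(\dot r^{-1})^\sharp\cdot \s^\sharp_{r,-}(g)^{-1}\cdot \tilde B^+/\tilde B^+$ -- is exactly what your proposal defers, and it is where the actual proof lives.
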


It is also worth mentioning that even in the finite type case, the atlas groups from the Birkhoff-Bruhat atlas we constructed and those from the Bruhat atlas in \cite{HKL} are different. Thus our construction provides a new ``atlas model'' for the partial flag varieties for connected reductive groups. 

We also construct a combinatorial ``atlas model'' for the poset $Q_K$. This ``atlas model'' identifies the poset $(Q_K, \preceq)$ with a convex subset of the Weyl group $\tilde{W}$ of the atlas group $\tilde{G}$ with respect to a twisted Bruhat order $\leI$. This combinatorial ``atlas model'' is valid for $Q_K$ from an arbitrary Coxeter group. 

\begin{theorem}
    The partial order $\preceq$ on $Q_K$ is thin and EL-shellable. 
\end{theorem}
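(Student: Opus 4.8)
The plan is to deduce both assertions from the combinatorial atlas model established above, which realizes $(Q_K, \preceq)$ as a convex subset $P$ of the Coxeter group $\tilde{W}$ equipped with the twisted Bruhat order $\leI$. The organizing observation is that thinness and EL-shellability pass to convex subsets: if $u \preceq w$ both lie in $P$, then convexity forces every element lying between them in $\tilde{W}$ to belong to $P$, so the interval $[u,w]$ computed inside $P$ coincides with the interval $[u,w]$ computed inside $\tilde{W}$. It therefore suffices to prove that the twisted order on $\tilde{W}$ is thin and admits a global edge-labeling restricting to an EL-labeling on every interval, and then to restrict this structure to $P$.

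First I would treat thinness. The twisted order $\leI$ is graded by a twisted length $\ell^{I^\flat}$, and a covering relation $u \lessdot w$ in $\leI$ has the form $w = tu$ for a reflection $t$ with $\ell^{I^\flat}(w) = \ell^{I^\flat}(u)+1$. To check the diamond condition I would take $u \leI w$ with $\ell^{I^\flat}(w) - \ell^{I^\flat}(u) = 2$ and show that exactly two elements lie strictly between them. This is a local statement that reduces, via the usual analysis of rank-two intervals, to dihedral reflection subgroups of $\tilde{W}$; inside each such subgroup the twisted order is again graded with every rank-two interval a diamond, as one verifies directly in the dihedral case.

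The main step is the EL-labeling of $\leI$, which I would construct by adapting Dyer's reflection-order shelling of ordinary Bruhat intervals. Fixing a reflection ordering on the set $T$ of reflections of $\tilde{W}$ that is aligned with the biclosed set $I^\flat$ defining the twist, I would label each cover $u \lessdot w = tu$ by the reflection $t$. The claim to be proved is that every interval of $\leI$ contains a unique maximal chain with strictly increasing labels and that this chain is lexicographically least. Since the twist replaces each inversion set $N(w)$ by its symmetric difference $N(w) \triangle I^\flat$, a reflection ordering chosen compatibly with $I^\flat$ makes the increasing-chain condition match the one Dyer uses in the untwisted setting; uniqueness and lexicographic minimality then follow his argument once the bookkeeping for the symmetric difference with $I^\flat$ is carried out.

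Finally I would transfer the conclusions to $P$. As every interval of $P$ equals the corresponding interval of $\tilde{W}$, thinness of $\tilde{W}$ yields thinness of $P$, and the restriction to $P$ of the reflection-order labeling is an EL-labeling on each interval of $P$. Adjoining a bottom element (and a top element where necessary) to $Q_K$ and extending the labeling over the new edges in the standard way then gives thinness and EL-shellability of $(Q_K, \preceq)$. I expect the EL-labeling of the twisted order to be the main obstacle: thinness is purely local and dihedral, whereas the EL-labeling depends on selecting a reflection ordering correctly synchronized with $I^\flat$ and on verifying that Dyer's increasing-chain analysis survives the twist, which is where the essential combinatorial work is concentrated.
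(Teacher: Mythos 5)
Your core reduction is exactly the paper's: identify $(Q_K,\preceq)$ with a convex subset of $(\tilde W, \leI)$ via Proposition~\ref{prop:tW}, note that convexity makes intervals in $Q_K$ coincide with intervals in $\tilde W$, and then invoke thinness and EL-shellability of the twisted Bruhat order. One difference of economy rather than substance: you propose to re-derive the two facts about $(\tilde W,\leI)$ (thinness via dihedral rank-two analysis, and the reflection-order EL-labeling by redoing Dyer's increasing-chain argument through the twist), whereas the paper simply cites Dyer — thinness is \cite[Proposition 2.5]{Dyer} and the EL-labeling is \cite[Proposition 3.9]{Dyer}, the latter valid for \emph{any} reflection order, with no synchronization with $I^\flat$ needed at this stage. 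Your instinct that "the essential combinatorial work" lies in pushing Dyer's analysis through the twist therefore misplaces the difficulty: that work is already in the literature.

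The genuine gap is your last step, where you adjoin the bottom element and extend the labeling "in the standard way." There is no standard way: $Q_K$ has many minimal elements $(r,r)$, $r\in W^K$, and adjoining $\hat 0$ below all of them does not automatically preserve either property. This is precisely where the paper (following Williams \cite{LW}) does its real work. For EL-shellability one must (i) choose the reflection order on $T$ so that $T\cap \tilde W_{I^\flat}$ is a \emph{final section} — existence is \cite[Proposition 2.3]{Dyer1}, and this is the actual reason the order must be synchronized with $I^\flat$, not the twisted order itself; (ii) insert the new label $\bot$ for the edges $\hat 0 \lessdot (r,r)$ strictly between $T\setminus \tilde W_{I^\flat}$ and $T\cap \tilde W_{I^\flat}$; and (iii) verify that in an interval $[\hat 0,(v,w)]$ the lexicographically minimal chains avoid edges labelled in $T\cap\tilde W_{I^\flat}$ and terminate through $(r,r)$ with $r=\min(vW_K)$, which is what makes the unique increasing chain exist and be lex-first. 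Likewise, thinness of the rank-two intervals $[\hat 0, y]$ (i.e., that each such $y$ lies above exactly two minimal elements) is a separate claim not covered by convexity, and the paper handles it by the argument of \cite[Proof of Theorem 1.1]{LW}. Without (i)--(iii) your proof establishes the properties only for $Q_K$ proper, not for the augmented poset $\hat Q_K$ that the paper actually proves them for and that the application to the Galashin--Karp--Lam conjecture requires.
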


We refer to section \ref{sec:EL} for the definition of thinness and EL-Shellability. This result generalizes the previous work of Williams \cite{LW}. 

Galashin, Karp and Lam \cite[Conjecture~10.2]{GKL} conjectured  that the totally nonnegative part of $\CP_K$ is a regular CW complex for a Kac-Moody group $G$, and $Q_K$ is its face poset. Theorem~{1.2} confirms the combinatorial aspect of their conjecture.

\subsection{Organization} This paper is organized as follows. We recall preliminaries of Kac-Moody groups and J-Schubert cells in Section~\ref{sec:pre}. We then define a Birkhoff-Bruhat atlas and construct such an atlas for the partial flag variety $\CP_{K}$ of arbitrary type in Section~\ref{sec:BB}. We discuss some combinatorial consequences in Section~\ref{sec:EL}. We construct another Birkhoff-Bruhat atlas for the partial flag variety $\CP_K$ when is $K$ is of finite type in Section~\ref{sec:2nd-atlas}.  We discuss examples in Section~\ref{sec:example}.

\vspace{.2cm}
\noindent {\bf Acknowledgement: } We thank Thomas Lam and Lauren Williams for helpful comments.

HB is supported by a NUS startup grant. XH is partially supported by a start-up grant and by funds connected with
Choh-Ming Chair at CUHK, and by Hong Kong RGC grant 14300220.
%%%%%%%%%%%%%%%%%%%%%%%%%%%%%%%%%%%%%%%%%%%%%%%%%%%%%%%%%%%%%%%%%%%%

%%%%%%%%%%%%%%%%%%%%%%%%%%%%%%%%%%%%%%%%%%%%%%%%%%%%%%%%%%%%%%%%%%%%
%%%%%%%%%%%%%%%%%%%%%%%%%%%%%%%%%%%%%%%%%%%%%%%%%%%%%%%%%%%%%%%%%%%%

%%%%%%%%%%%%%%%%%%%%%%%%%%%%%%%%%%%%%%%%%%%%%%%%%%%%%%%%%%%%%%%%%%%%
\section{Preliminary}\label{sec:pre}
\subsection{Minimal Kac-Moody groups}
 
Let $I$ be a finite set and $A=(a_{ij})_{i, j \in I}$ be a symmetrizable generalized Cartan matrix in the sense of \cite[\S 1.1]{Kac}. A {\it Kac-Moody root datum} associated to $A$ is a quintuple $$\CD=(I, A, X, Y, (\a_i)_{i \in I}, (\a^\vee_i)_{i \in I}),$$ where $X$ is a free $\BZ$-module of finite rank with $\BZ$-dual $Y$, and the elements $\a_i$ of $X$ and $\a^\vee_i$ of $Y$ such that $\<\a^\vee_j, \a_i\>=a_{ij}$ for $i, j \in I$. We denote by $\omega_i \in X$  the element that $\<\a^\vee_j, \omega_i\>=\delta_{ij}$. We shall assume the root datum $\mathcal{D}$ is simply connected.

We have natural actions of $W$ on both $X$ and $Y$. Let 
$$
\Delta^{re} = \{w ( \pm \alpha_i) \in X \mid i \in I, w \in W\} \subset X
$$
 be the set of real roots. Then $\D^{re}=\D^{re}_+ \sqcup \D^{re}_-$ is the union of positive real roots and negative real roots. 

Let $\kk$ be an algebraically closed field. The {\it minimal Kac-Moody group} $G$ associated to the Kac-Moody root datum $\CD$ is the group generated by the torus $T=Y \otimes_\BZ \kk^\times$ and the root subgroup $U_\a \cong \kk$ for each real root $\a$, subject to the Tits relations \cite{Ti87}. Let $U^+  \subset G $ (resp. $U^-  \subset G$) be the subgroup generated by $U_\a$ for $\a \in \D^{re}_+$ (resp. $\a \in \D^{re}_-$). Let $B^{\pm}  \subset G $ be the Borel subgroup generated by $T$ and $U^{\pm} $. We fix a pinning of $G$ consisting of $(T, B^+, B^-, x_i, y_i; i \in I)$ with one parameter subgroups $x_i: \kk \rightarrow U_{\alpha_i}$ and $y_i: \kk \rightarrow U_{-\alpha_i}$ analogous to \cite{Lu94}. We have an anti-involution $\Psi$ of $G$ analogous to \cite[\S1.2]{Lu94} such that $\Psi(x_i(a)) = y_i(a)$, $\Psi(y_i(a)) = x_i(a)$ and $\Psi(t) =t$ for $a \in \kk$, $t \in T$.

Let $J \subset I$ (not necessarily of finite type). We denote by $P^+_J$ the subgroup of $G$ generated by $B^+$ and $U_{-{\a_j}}$ for $j \in J$. Let $W_J$ be the subgroup of $W$ generated by $\{s_j\}_{j \in J}$. Let $W^J$ be the set of minimal-length coset representatives of $W/W_J$ and ${}^J W$ be the set of minimal-length coset representatives of $W_J \backslash W$. For $i \in I$, we define 
\[
\dot{s}_i = x_{i}(-1) y_i (1) x_{i}(-1) \in G.
\]
For any $w \in W$ with reduced expression $w = s_{i_1} \cdots s_{i_n}$, we define 
\[
\dot{w} = \dot{s}_{i_1} \cdots \dot{s}_{i_n} \in G.
\]
It is known that $\dot{w}$ is well-defined and independent of the reduced expression. 

Let $L_J$ be the subgroup of $P^+_J$ generated by $T$, $U_{\pm{\a_j}}$ for $j \in J$. We denote by $\D^{re}_J =  \{w ( \pm \alpha_j) \in X \vert j \in J, w \in W_J\} \subset \Delta^{re}$ the set of real roots of $L_J$. We write $\D^{re}_{J,\pm} = \D^{re}_J \cap \Delta^{re}_{\pm}$.
We denote by $U_{P^+_J}$ the unipotent radical of  ${P^+_J}$, which is generated by $U_{{\a}}$ for $\a \in \D^{re}_+ - \D^{re}_J$. We have following Levi decomposition of $P^+_J$ \cite[Theorem~B.39]{Mar}
\begin{equation}\label{eq:levi}
  P^+_J = L_J \ltimes U_{P^+_J}.
\end{equation}

We similarly define the subgroup $P^-_J$ of $G^{\min}$ as the subgroup generated by $B^-$ and $U_{ {\a_j}} $ for $j \in J$, with the Levi decomposition 
\begin{equation}\label{eq:levi-}
	P^-_J = L_J \ltimes U_{P^-_J}.
\end{equation}

%%%%%%%%%%%%%%%%%%%%%%%%%%%%%%%%%%%%%%%%%%%%%%%%%%%%%%%%%%%%%%%%%%%%%%%%%%%%%%%%%%%%%%%%%%%%%%%

%%%%%%%%%%%%%%%%%%%%%%%%%%%%%%%%%%%%%%%%%%%%%%%%%%%%%%%%%%%%%%%%%%%%%%%%%%%%%%%%%%%%%%%%%%%%%%%

\subsection{The full flag variety}

In this subsection, we recall several results on the Kac-Moody flag varieties.  

We denote by $\CB$ the (thin) full flag variety \cite{Kum}, equipped with the ind-variety structure.  Let $v, w \in W$.  Define, respectively, the Schubert cell, the opposite Schubert cell and the open Richardson variety by 
\[
\mathring{X}^w =B^+ \dot{w} B^+/B^+, \quad \mathring{X}_v=B^- \dot{v} B^+/B^+, \quad \mathring{R}_{v, w}=\mathring{X}^w \cap \mathring{X}_v.
\]

We have the Bruhat decomposition $\CB=\sqcup_{w \in W} \mathring{X}^w$ and the Birkhoff decomposition $\CB=\sqcup_{v \in W} \mathring{X}_v$. It is known that $\mathring{R}_{v, w} \neq \emptyset$ if and only if $v \le w$. In this case, $\mathring{R}_{v, w}$ is irreducible of dimension $\ell(w)-\ell(v)$. We also have the decomposition $$\CB=\sqcup_{v \le w} \mathring{R}_{v, w}.$$

Let $X^w, X_v, R_{v, w}$ be the (Zariski) closure of $\mathring{X}^w, \mathring{X}_v, \mathring{R}_{v, w}$ respectively. By \cite[Proposition~7.1.15\&7.1.21]{Kum}, 
$$
X^w=\bigsqcup_{w' \le w} \mathring{X}^{w'}, \qquad X_v=\bigsqcup_{v' \ge v} \mathring{X}_{v'}.
$$

As the Schubert varieties and opposite Schubert varieties intersect transversally, we also have (see e.g. \cite{KLS}) 
$$
R_{v, w}=\bigsqcup_{v \le v' \le w' \le w} \mathring{R}_{v', w'}.
$$

%%%%%%%%%%%%%%%%%%%%%%%%%%%%%%%%%%%%%%%%%%%%%%%%%%%%%%%%%%%%%%%%%%%%

%%%%%%%%%%%%%%%%%%%%%%%%%%%%%%%%%%%%%%%%%%%%%%%%%%%%%%%%%%%%%%%%%%%%

\subsection{The $J$-Schubert cells and $J$-Richardson varieties}\label{sec:J-Schubert}
Let $J \subset I$. Following \cite[Closure patterns]{BD}, we define the partial order $\leJ$ on $W$ as follows. The partial order $\leJ$ on $W$ is generated by the relations $s_\b w \, {}^J\!\!<w$ for $w \in W$ and $\b \in \Psi_J$ with $w \i(\b) \in \D^{re, -}$. Define a (non-standard) length function ${}^J\ell$ on $W$ by 
$$
{}^J \ell(w)=\ell(w)-2 \sharp(\D^{re, +}_J \cap w \i (\D^{re, -})).
$$

Note that any element in $W$ can be written in a unique way as $x y$ for $x \in W_J$ and $y \in {}^J W$. We have ${}^J \ell(x y)=\ell(y)-\ell(x)$, where $\ell(\cdot)$ denotes the usual length function. We define 
\[
\Psi^{+}_J=\D^{re, -}_J \sqcup (\D^{re, +}-\D^{re, +}_J), \qquad \Psi^{-}_J=\D^{re, +}_J \sqcup (\D^{re, -}-\D^{re, -}_J).
\]

\begin{remark}
The relation between our partial order $\leJ$ and the partial order $\le_{\Psi_J}$ used in \cite[Closure pattern]{BD} is the following 
\[
 v \leJ w \text { if and only if }  v^{-1} \le_{\Psi_J} w^{-1}. 
\]
Note that  $v \leJ w$ is not equivalent to $v^{-1} \leJ w^{-1}$ in general \cite[Page 18]{BD}.
\end{remark}
\begin{remark}
Let $W_J$ be a finite Weyl group. In this case, we denote by $w_J$ the longest element of $W_J$. Then for $w, w' \in W$, $w' \leJ w$ if and only if $w_J w' \le w_J w$. Moreover, we have ${}^J \ell(w)=\ell(w_J w)-\ell(w_J)$. 
\end{remark}
%%%%%%%%%%%%%%%%%%%%%%%%%%%%%%%%%%%%%%%%%%%%%%%%%%%%%
%%%%%%%%%%%%%%%%%%%%%%%%%%%%%%%%%%%%%%%%%%%%%%%%%%%%%

%%%%%%%%%%%%%%%%%%%%%%%%%%%%%%%%%%%%%%%%%%%%%%%%%%%%%
%%%%%%%%%%%%%%%%%%%%%%%%%%%%%%%%%%%%%%%%%%%%%%%%%%%%%

Let ${}^J B^+$ be the subgroup of $G$ generated by $T$ and $U_{\a}$ for $\a \in \Psi_J^+$. Then ${}^J B^+$ is the opposite Borel subgroup of the standard parabolic subgroup $P^+_J$. Similarly, let ${}^J B^-$ be the subgroup of $G$ generated by $T$ and $U_{\a}$ for $\a \in \Psi_J^-$. In the case where $J=\emptyset$, we have ${}^J B^+=B^+$ and ${}^J B^-=B^-$. In the case where $J=I$, we have ${}^J B^+=B^-$ and ${}^J B^-=B^+$. Let $B^{\pm}_J=L_J \cap B^{\pm}$ and $U^{\pm}_J=L_J \cap U^{\pm}$.

Thanks to the Levi decompositions \eqref{eq:levi}\&\eqref{eq:levi-}, we have 
\[
{}^J B^+ = B^-_J \ltimes U_{P^+_J}, \qquad {}^J B^- = B^+_J \ltimes U_{P^-_J}.
\]

Let $v, w \in W$. Define, respectively, the $J$-Schubert cell, the opposite $J$-Schubert cell and the open $J$-Richardson variety by 
$$
{}^J \mathring{X}^w ={}^J B^+  \dot{w} B^+/B^+, \quad {}^J \mathring{X}_v={}^J B^-  \dot{v} B^+/B^+,\quad {}^J \mathring{R}_{v, w}={}^J \mathring{X}^w \cap {}^J \mathring{X}_v.
$$

\begin{lem}\label{lem:JXw}
We have isomorphisms
\begin{gather*}  
(U_J^{-}  \cap \dot w U^- \dot w^{-1}) \times (U_{P^+_J}   \cap \dot w U^- \dot w^{-1} ) \rightarrow {}^J \mathring{X}^w, \quad
(x_1, x_2) \mapsto x_1 x_2  \dot{w} B^+/B^+; \\
(  U_J^{+}  \cap \dot v U^- \dot v^{-1}) \times (  U_{P^-_J}   \cap \dot v U^- \dot v^{-1}) \longrightarrow {}^J \mathring{X}_v, \quad
(x_1, x_2) \mapsto x_1 x_2  \dot{v} B^+/B^+.
\end{gather*}
\end{lem}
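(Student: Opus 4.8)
The plan is to realize ${}^J\mathring{X}^w$ as a single ${}^J B^+$-orbit, to strip off the stabilizer by a root-theoretic unique-factorization of ${}^J B^+$, and then to split the resulting subgroup into the two asserted factors by means of the Levi decomposition ${}^J B^+ = B^-_J \ltimes U_{P^+_J}$. I will only describe the first isomorphism in detail; the second is obtained by the symmetric argument, replacing $B^+,\ \dot w,\ {}^J B^+,\ \Psi^+_J$ by $B^-,\ \dot v,\ {}^J B^- = B^+_J \ltimes U_{P^-_J},\ \Psi^-_J$.

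First I would set $N = {}^J B^+ \cap \dot w U^- \dot w^{-1}$ and consider the orbit map $b \mapsto b\,\dot w B^+/B^+$ on ${}^J B^+$, whose image is ${}^J\mathring{X}^w$. Restricted to $N$ this map is injective: if $n_1\dot w B^+ = n_2\dot w B^+$ then $n_2^{-1}n_1$ lies in $\dot w U^- \dot w^{-1} \cap \dot w B^+ \dot w^{-1} = \dot w(U^- \cap B^+)\dot w^{-1} = \{1\}$. Surjectivity of $N \to {}^J\mathring{X}^w$ then reduces to the key claim, namely the unique factorization
\[
{}^J B^+ = N \cdot \big({}^J B^+ \cap \dot w B^+ \dot w^{-1}\big)
\]
as an isomorphism of ind-varieties: writing $b = n s$ accordingly and using $\dot w^{-1} s \dot w \in B^+$, one gets $b\,\dot w B^+ = n\,\dot w B^+$, so every orbit point already lies in the image of $N$.

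To prove the factorization I would argue at the level of root subgroups. By definition ${}^J B^+$ is generated by $T$ and the $U_\alpha$ with $\alpha \in \Psi^+_J$, while $\dot w U^- \dot w^{-1}$ (resp. $\dot w B^+ \dot w^{-1}$) is generated by the $U_\alpha$ with $\alpha \in w(\Delta^{re,-})$ (resp. together with $T$ by those with $\alpha \in w(\Delta^{re,+})$). Since $w$ permutes $\Delta^{re} = w(\Delta^{re,+}) \sqcup w(\Delta^{re,-})$, the closed root set $\Psi^+_J$ splits as
\[
\Psi^+_J = \big(\Psi^+_J \cap w(\Delta^{re,-})\big) \sqcup \big(\Psi^+_J \cap w(\Delta^{re,+})\big),
\]
and these are precisely the root sets of $N$ and of $\big({}^J B^+ \cap \dot w B^+ \dot w^{-1}\big)$. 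Choosing a convex order on the real roots and collecting the $w(\Delta^{re,-})$-factors on the left then yields the factorization from the standard structure theory of the (pro-)unipotent groups attached to closed sets of real roots. Finally I would refine $N$ using $\Psi^+_J = \Delta^{re,-}_J \sqcup (\Delta^{re,+} - \Delta^{re,+}_J)$: intersecting with $w(\Delta^{re,-})$ gives
\[
N = (U^-_J \cap \dot w U^- \dot w^{-1}) \cdot (U_{P^+_J} \cap \dot w U^- \dot w^{-1}),
\]
where $B^-_J \cap \dot w U^- \dot w^{-1} = U^-_J \cap \dot w U^- \dot w^{-1}$ because the torus of $B^-_J$ cannot meet the unipotent group $\dot w U^- \dot w^{-1}$, and the product is direct since $U^-_J \subset L_J$ normalizes $U_{P^+_J}$ while both factors sit inside $\dot w U^- \dot w^{-1}$. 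This is exactly the claimed isomorphism $(x_1,x_2) \mapsto x_1 x_2\,\dot w B^+/B^+$.

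The main obstacle I anticipate is making the factorization rigorous in the Kac-Moody setting. Unlike the finite-type case, $\Psi^+_J \cap w(\Delta^{re,\pm})$ can be infinite (this is precisely why $J$-Schubert cells are in general neither finite dimensional nor finite codimensional), so one cannot reduce to a finite product of root subgroups. The argument must therefore be carried out in the appropriate completion of the relevant unipotent groups, verifying that the infinite products converge, that the factorization is uniquely determined, and that both it and its inverse are morphisms of ind-varieties; the torus and the mixed signs of $\Psi^+_J$ make the bookkeeping the genuinely delicate part.
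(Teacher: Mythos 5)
Your skeleton---realize ${}^J\mathring{X}^w$ as the ${}^J B^+$-orbit of $\dot w B^+/B^+$, kill the stabilizer by a unique factorization, and land everything inside $\dot w U^-\dot w^{-1}$, where the orbit map is injective---is the right one, and both your injectivity argument (via $\dot w U^-\dot w^{-1}\cap \dot w B^+\dot w^{-1}=\{1\}$) and the final refinement of $N$ are sound. The genuine gap is exactly at the step you flag yourself: the factorization
\begin{equation*}
{}^J B^+ \;=\; \bigl({}^J B^+\cap \dot w U^-\dot w^{-1}\bigr)\cdot\bigl({}^J B^+\cap \dot w B^+\dot w^{-1}\bigr)
\end{equation*}
is essentially the entire content of the lemma, and your justification---choose a convex order on the real roots and ``collect'' the $w(\Delta^{re,-})$-factors on the left, by the standard structure theory of groups attached to closed sets of real roots---is not an available theorem for a group like ${}^J B^+$. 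That theory (Kumar, Theorem 5.2.3 and its surroundings; Marquis) is developed for subgroups of $U^+$ or of $U^-$, i.e.\ for closed subsets of $\Delta^{re,+}$ or of $\Delta^{re,-}$ alone; the set $\Psi^+_J$ mixes roots of both signs, ${}^J B^+$ also contains $T$, and in general it is neither finite-dimensional nor pro-unipotent, so there is no off-the-shelf reordering/collection principle to invoke. A smaller instance of the same problem: identifying $\Psi^+_J\cap w(\Delta^{re,\pm})$ as ``the root sets'' of the two intersection groups presupposes that these intersections are generated by the root subgroups (and torus) they contain, which is itself something that must be proved, not read off. Since you then explicitly defer the convergence/completion/ind-variety issues as ``the genuinely delicate part'' without resolving them, the proposal asserts its key claim rather than proving it.

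The way to close the gap is to use the Levi decomposition ${}^J B^+ = B^-_J \ltimes U_{P^+_J}$ at the \emph{start} rather than only at the end, which is what the paper does. This splits the problematic mixed group into $U^-_J\subset U^-$ and $U_{P^+_J}\subset U^+$, and for each of these Kumar's Theorem 5.2.3 gives the unique factorization $U' = (U'\cap\dot w U^-\dot w^{-1})\odot(U'\cap\dot w U^+\dot w^{-1})$, compatibly with the Levi decomposition; all of the infinite-dimensional bookkeeping you worry about is packaged into that cited theorem. The factors lying in $\dot w U^+\dot w^{-1}\subset \dot w B^+\dot w^{-1}$ are then absorbed into the base point (using that $U^-_J\subset L_J$ normalizes $U_{P^+_J}$ to move its ``positive'' factor across), giving
\begin{equation*}
{}^J B^+\dot w B^+/B^+ \;=\; (U^-_J\cap\dot w U^-\dot w^{-1})\cdot(U_{P^+_J}\cap\dot w U^-\dot w^{-1})\cdot \dot w B^+/B^+,
\end{equation*}
and restricting the ind-variety isomorphism $\dot w U^-\dot w^{-1}\xrightarrow{\sim} \dot w U^- B^+/B^+$, $g\mapsto g\dot w B^+/B^+$, finishes the proof. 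So your outline can be repaired, but only by replacing its root-combinatorial core with the Levi-plus-Kumar reduction.
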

 
 \begin{proof}We prove the first statement here. The second one is entirely similar.  It follows from the Levi decompositions and \cite[Theorem~5.2.3]{Kum} that we have isomorphisms 
 \[
(U_J^{-}  \cap \dot w U^- \dot w^{-1}) \times  (U_J^{-} \cap \dot w U^+ \dot w^{-1}) \longrightarrow  U_J^{-},
 \]
 \[
(U_{P^+_J}   \cap \dot w U^- \dot w^{-1}) \times  (U_{P^+_J}   \cap \dot w U^+ \dot w^{-1}) \longrightarrow  U_{P^+_J}.
 \]
 
Therefore we have 
\[{}^J B^+  \dot{w} B^+/B^+ =  (U_J^{-}  \cap \dot w U^- w^{-1})  \cdot (U_{P^+_J}  \cap \dot w U^- w^{-1}) \cdot \dot{w} B^+/B^+. %\cdot   (U_J^{-} \cap w U^+ w^{-1}) \cdot   (U_{P^+_J}   \cap w U^+ w^{-1})   {w} B^+/B^+ 
\]
 Now the lemma follows from the restriction of the isomorphism
 \[
    \dot w U^- \dot w^{-1} \longrightarrow \dot{w} U^- B^+ / B^+, \quad g \mapsto g \dot{w} B^+ /B^+. \qedhere
 \]
%Now the lemma follows from \cite[Lemma~7.4.10 \& Lemma~6.1.4 \& Proposition~7.4.11]{Kum}, 
 \end{proof}
 
By \cite[Theorem 1]{BD}, we have 
\begin{equation}\label{eq:BD1}
\CB=\sqcup_{w \in W} {}^J\! \mathring{X}^w=\sqcup_{v \in W} {}^J\! \mathring{X}_v.
\end{equation}

Let ${}^J X^w$ and ${}^J X_v$ be the (Zariski) closure of ${}^J\! \mathring{X}^w$ and ${}^J\! \mathring{X}_v$ respectively. By \cite[Theorem 4]{BD}, we have 
\begin{equation}\label{eq:closure} {}^J X^w=\overline{{}^J \mathring{X}^w} = \bigsqcup_{w' \leJ w} \mathring{X}^{w'}, {}^J X_v=\overline{{}^J \mathring{X}_v} =\bigsqcup_{v \leJ v'} \mathring{X}_{v'}.
\end{equation}

\begin{proposition}\label{prop:intersection}
	Let $v, w \in W$. Then the following conditions are equivalent: 
	
	(1) ${}^J \mathring{X}^w \cap {}^J \mathring{X}_v \neq \emptyset$; 
	
	(2) ${}^J X^w \cap {}^J X_v \neq \emptyset$; 
	
	(3) $v \leJ w$. 
	
	%In this case, both intersection are irreducible with dimension ${}^J \ell(w)-{}^J \ell(v)$. 
\end{proposition}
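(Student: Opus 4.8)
The implication $(1)\Rightarrow(2)$ is immediate, since ${}^J\mathring X^w\subseteq{}^JX^w$ and ${}^J\mathring X_v\subseteq{}^JX_v$. The real content is carried by $(1)\Rightarrow(3)$ and $(3)\Rightarrow(1)$; once $(1)\Rightarrow(3)$ is in hand, $(2)\Rightarrow(3)$ follows formally. The plan is therefore to prove $(1)\Rightarrow(3)$ by a contracting one-parameter subgroup argument, to deduce $(2)\Rightarrow(3)$ from it, and finally to establish the existence statement $(3)\Rightarrow(1)$, which is the main obstacle.

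For $(1)\Rightarrow(3)$ the idea is to flow a point of the intersection to the torus-fixed point $\dot wB^+/B^+$ and then read off the order relation from the closure formula \eqref{eq:closure}. Let $p\in{}^J\mathring X^w\cap{}^J\mathring X_v$. By Lemma~\ref{lem:JXw} I may write $p=u\,\dot wB^+/B^+$ with $u\in\dot wU^-\dot w^{-1}$, so that $u$ is a product of elements of the root subgroups $U_\delta$ with $\delta\in w(\Delta^{re,-})$. I then choose $\lambda\in Y$ with $\langle\lambda,\delta\rangle>0$ for every $\delta\in w(\Delta^{re,-})$; such a $\lambda$ exists, for instance a suitable positive multiple of $-w(\rho^\vee)$ where $\langle\rho^\vee,\alpha_i\rangle=1$. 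Conjugation then gives $\lambda(t)\cdot p\to\dot wB^+/B^+$ as $t\to0$, because $\lambda(t)$ fixes $\dot wB^+/B^+$ and contracts every such $U_\delta$ to the identity. Since $T\subseteq{}^JB^-$, the set ${}^JX_v$ is closed and $T$-stable, so the limit $\dot wB^+/B^+$ lies in ${}^JX_v$. As $\dot wB^+/B^+\in{}^J\mathring X_w$, the closure formula \eqref{eq:closure} forces $v\leJ w$.

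Granting $(1)\Rightarrow(3)$, the implication $(2)\Rightarrow(3)$ is purely combinatorial. If $p\in{}^JX^w\cap{}^JX_v$, then by the two decompositions in \eqref{eq:BD1} the point $p$ lies in a unique $J$-Schubert cell ${}^J\mathring X^{w'}$ and a unique opposite $J$-Schubert cell ${}^J\mathring X_{v'}$. The closure formula \eqref{eq:closure} gives $w'\leJ w$ and $v\leJ v'$, while $p\in{}^J\mathring X^{w'}\cap{}^J\mathring X_{v'}$ together with $(1)\Rightarrow(3)$ gives $v'\leJ w'$. Concatenating, $v\leJ v'\leJ w'\leJ w$, so $v\leJ w$.

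The remaining implication $(3)\Rightarrow(1)$ is the heart of the matter, and it cannot be reduced to the previous ones because $(2)\Rightarrow(1)$ fails in general: stratified closures may meet only along boundary strata. I plan to argue by induction on ${}^J\ell(w)-{}^J\ell(v)\ge0$. The base case $v=w$ is settled by the diagonal point $\dot wB^+/B^+\in{}^J\mathring X^w\cap{}^J\mathring X_w$. For the inductive step, given $v\leJ w$ with $v\neq w$, I will choose $v'$ with $v\leJ v'\leJ w$ and ${}^J\ell(v')={}^J\ell(v)+1$, so that $v=s_\beta v'$ for a real root $\beta$ which, after replacing $\beta$ by $-\beta$ if necessary, lies in $\Psi_J^+$; by induction there is a point $p'\in{}^J\mathring X^w\cap{}^J\mathring X_{v'}$. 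Moving $p'$ by the root subgroup $U_\beta\subseteq{}^JB^+$ keeps the orbit inside the ${}^JB^+$-orbit ${}^J\mathring X^w$, while a rank-one computation should show that for a generic parameter the point crosses from the opposite cell ${}^J\mathring X_{v'}$ into the larger cell ${}^J\mathring X_v$, yielding a point of ${}^J\mathring X^w\cap{}^J\mathring X_v$. Making this last step precise---matching the covering relations of the Billig--Dyer order $\leJ$ with the left action of the root subgroups on the Birkhoff-type decomposition, and pinning down the sign conventions so that $U_\beta$ simultaneously lies in ${}^JB^+$ and lowers the opposite index---is the delicate point that I expect to demand the most care.
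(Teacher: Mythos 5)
Your handling of the implications out of (1) and (2) is essentially correct, and it takes a genuinely different route from the paper. The paper proves $(2)\Rightarrow(3)$ abstractly: it intersects ${}^J X^w \cap {}^J X_v$ with a finite-dimensional Schubert variety $X^z$, invokes \cite[Exercise 7.1.E.5]{Kum} to produce a $T$-fixed point, and compares the $T$-fixed point sets coming from \eqref{eq:closure}. You instead prove $(1)\Rightarrow(3)$ by contracting a point $u\dot w B^+/B^+$ of the open stratum to $\dot w B^+/B^+$ along the cocharacter $-w(\rho^\vee)$; this works (the limit exists because $u$ involves only finitely many real root subgroups, and ${}^J X_v$ is closed and $T$-stable), and it is a perfectly good substitute. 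Your deduction of $(2)\Rightarrow(3)$ needs one small repair: \eqref{eq:closure} writes ${}^J X^w$ as a union of \emph{ordinary} Bruhat cells $\mathring{X}^{w'}$, not of $J$-Schubert cells, so to conclude that the $J$-cell ${}^J\mathring{X}^{w'}$ containing $p$ has $w' \leJ w$ you must first note that ${}^J X^w$, being closed and ${}^J B^+$-stable, is a union of $J$-cells, and then identify indices through the $T$-fixed points $\dot w' B^+/B^+$. This is easily fixed.

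The genuine gap is $(3)\Rightarrow(1)$, which you yourself leave open, and it is exactly the implication your framework cannot reach. Two concrete obstacles stand in the way of your proposed induction. First, covering relations of $\leJ$ involve reflections $s_\beta$ for \emph{arbitrary} real roots $\beta \in \Psi_J^+$, not just simple ones; the rank-one $SL_2$ computation that moves a point across strata is standard only for the $T$-fixed base point $\dot v' B^+/B^+$, whereas your induction hands you an arbitrary point $p' = q\,\dot v' B^+/B^+$ with $q \in {}^J B^-$, and $U_\beta(a)$ does not commute past $q$, so the computation does not transfer. Second, even granting that the curve $a \mapsto U_\beta(a)\,p'$ leaves ${}^J\mathring{X}_{v'}$, its generic point a priori lies in some stratum ${}^J\mathring{X}_{v''}$ with $v'' \leJ v'$, and nothing forces $v'' = s_\beta v'$; if it lands elsewhere, the induction collapses. (You would also need gradedness of $\leJ$ by ${}^J\ell$ to extract saturated chains; that input at least is available from Dyer's work on twisted Bruhat orders.) The paper's proof of $(3)\Rightarrow(1)$ avoids all of this with a product-decomposition argument: by Lemma~\ref{lem:JXw} and \cite[Theorem~5.2.3]{Kum}, the open set $U_v = \dot v U^- B^+/B^+$ factors as $\bigl(U_J^- U_{P^+_J} \cap \dot v U^- \dot v^{-1}\bigr) \times {}^J\mathring{X}_v$, and since ${}^J\mathring{X}^w$ is $U_J^- U_{P^+_J}$-stable this restricts to an isomorphism $\bigl(U_J^- U_{P^+_J} \cap \dot v U^- \dot v^{-1}\bigr) \times \bigl({}^J\mathring{X}^w \cap {}^J\mathring{X}_v\bigr) \cong {}^J\mathring{X}^w \cap U_v$. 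Hence ${}^J\mathring{X}^w \cap {}^J\mathring{X}_v \neq \emptyset$ if and only if ${}^J\mathring{X}^w \cap U_v \neq \emptyset$, which, because $U_v$ is open, holds if and only if ${}^J X^w \cap U_v \neq \emptyset$; the latter contains $\dot v B^+/B^+$ by \eqref{eq:closure} whenever $v \leJ w$. You should replace your sketched induction by this argument; no version of the crossing step is needed.
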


\begin{proof}
	It is obvious that $(1) \Rightarrow (2)$. 
	
	We show that $(2) \Rightarrow (3)$. If ${}^J X^w \cap {}^J X_v \neq \emptyset$, then there exists $z \in W$, such that ${}^J X^w \cap {}^J X_v \cap X^z \neq \emptyset$. Since $X^z$ is finite dimensional, ${}^J X^w \cap {}^J X_v \cap X^z$ is still finite dimensional. It is projective and stable under the left action of $T$. By \cite[Exercise 7.1.E.5]{Kum}, ${}^J X^w \cap {}^J X_v \cap X^z$ has a $T$-fixed point. Hence ${}^J X^w \cap {}^J X_v$ has a $T$-fixed point. 
	
	Note that the $T$-fixed points in $X$ are $\{\dot w B^+/B^+; w \in W\}$. Thus by \eqref{eq:BD1}, $\dot w B^+/B^+$ is the only $T$-fixed point of ${}^J \mathring{X}^w$ and of ${}^J \mathring{X}_w$. By \eqref{eq:closure}, the $T$-fixed points in ${}^J X^w$ are $\{\dot w' B^+/B^+; w' \leJ w\}$ and the $T$-fixed points in ${}^J X_v$ are $\{\dot v' B^+/B^+; v \leJ v'\}$. Since ${}^J X^w$ and ${}^J X_v$ have a common $T$-fixed point, we must have $v \leJ w$. 
	
	We then show that $(3) \Rightarrow (1)$. Let $U_v =  \dot v  U^- B^+/ B^+ \subset \CB$. Thanks to the Levi decomposition and \cite[Theorem~5.2.3]{Kum}, we have the isomorphisms
\[
	\begin{tikzcd}[row sep=tiny, column sep=small]
	(U_J^{-}U_{P^+_J}  \cap \dot v U^- \dot v^{-1}) \times (  U_J^{+}U_{P^-_J}  \cap \dot v U^- \dot v^{-1}) \ar[r, "\sim"]&\dot v U^- \dot v^{-1} \ar[r, "\sim"] &   U_v,\\
	(g_1, g_2) \ar[r,mapsto]& g_1 g_2 \ar[r,mapsto]&g_1 g_2 \dot{v} B^+/ B^+.
	\end{tikzcd}
\]

	By Lemma~\ref{lem:JXw}, we have the isomorphism 
	\[
	(U_J^{-}U_{P^+_J}  \cap \dot v U^- \dot v^{-1}) \times  {}^J \mathring{X}_v \xrightarrow{\sim} U_v.
	\]
	Since ${}^J \mathring{X}^w$ is $U_J^{-}U_{P^+_J}$-stable, we have, via restriction,
	\[
	(U_J^{-}U_{P^+_J}  \cap \dot v U^- \dot v^{-1}) \times  ({}^J \mathring{X}^w \cap {}^J \mathring{X}_v)  \xrightarrow{\sim} {}^J \mathring{X}^w \cap U_v.
	\]
	It remains to prove ${}^J \mathring{X}^w \cap U_v \neq \emptyset$ when $v \leJ w$. Since $U_v$ is open in $\CB$,  we have ${}^J \mathring{X}^w \cap U_v \neq \emptyset$ if and only if ${}^J {X}^w \cap U_v  \neq \emptyset$. Thanks to \eqref{eq:closure}, we have $\dot{v} B^+/ B^+ \in {}^J {X}^w$ if $v \leJ w$. We clearly have $\dot{v} B^+/ B^+ \in U_v$.  Therefore $\dot{v} B^+/ B^+ \in U_v \cap {}^J {X}^w$. The claim follows.
\end{proof}

%%%%%%%%%%%%%%%%%%%%%%%%%%%%%%%%%%%%%%%%%%1%%%%%%%%%%%
%%%%%%%%%%%%%%%%%%%%%%%%%%%%%%%%%%%%%%%%%%%%%%%%%%%%%

%%%%%%%%%%%%%%%%%%%%%%%%%%%%%%%%%%%%%%%%%%%%%%%%%%%%%
%%%%%%%%%%%%%%%%%%%%%%%%%%%%%%%%%%%%%%%%%%%%%%%%%%%%%
\subsection{J-Schubert decompositions}In this subsection, we study a more refined version of Lemma~\ref{lem:JXw}.

For a group $K$ and subsets $K'$, $K_1$, $K_2$, $\dots$, $K_n$, we write 
\[
K' = K_1 \odot K_2 \odot \cdots \odot K_n
\]
 if any element $k' \in K'$ can be written uniquely as $k' = k_1 k_2 \cdots k_n$ with $k_i \in K_i$.
\begin{lem}\label{lem:+LL}
Let $v \in W_{J_1}$ and $w \in W_{J_2} \cap {}^{J_1} W$. We have 
\[
U^-_{J_1} U_{P^+_{J_1}}  \cap (\dot v \dot w) U^-    (\dot v \dot w)^{-1}  = (U^-_{J_1} \cap  \dot v U^-_{J_1}  \dot v^{-1}) \odot  \dot v(U^+_{P^+_{J_1}} \cap  \dot wU^-_{J_2} \dot w^{-1} ) \dot v^{-1}.
\]
\end{lem}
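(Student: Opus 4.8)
The plan is to translate the asserted group identity into a statement about sets of real roots, settle the combinatorics there, and then lift the resulting partition back to subgroups. I will use throughout that each subgroup in sight is generated by the root subgroups $U_\alpha$ attached to an explicit set of real roots, that conjugation by a Weyl lift $\dot u$ carries $U_\alpha$ to $U_{u\alpha}$, and that taking intersections of such subgroups corresponds to intersecting the defining root sets (with a unique ordered factorization). In the Kac--Moody setting these are furnished by the Levi decompositions \eqref{eq:levi}, \eqref{eq:levi-} and by \cite[Theorem~5.2.3]{Kum}, exactly as in the proof of Lemma~\ref{lem:JXw}.

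First I would record the relevant root sets. The unipotent part $U^-_{J_1}U_{P^+_{J_1}}$ of ${}^{J_1}B^+$ is generated by $U_\alpha$ for $\alpha\in\Psi^+_{J_1}=\D^{re,-}_{J_1}\sqcup(\D^{re,+}-\D^{re,+}_{J_1})$, while $(\dot v\dot w)U^-(\dot v\dot w)^{-1}$ is generated by $U_\gamma$ for $\gamma\in(vw)\D^{re,-}$. Hence the left-hand side is generated by $U_\gamma$ for $\gamma$ in
\[
S:=\Psi^+_{J_1}\cap(vw)\D^{re,-},
\]
and I split $S=S_A\sqcup S_B$ along the two pieces of $\Psi^+_{J_1}$, with $S_A\subseteq\D^{re,-}_{J_1}$ and $S_B\subseteq\D^{re,+}-\D^{re,+}_{J_1}$.

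The core of the argument is the identification of $S_A$ and $S_B$, and this is where all three hypotheses on $v,w$ enter. For $S_A$ I use $w\in{}^{J_1}W$, which gives $w^{-1}(\D^{re,\pm}_{J_1})\subseteq\D^{re,\pm}$; thus for $\gamma\in\D^{re,-}_{J_1}$ one has $(vw)^{-1}\gamma=w^{-1}(v^{-1}\gamma)<0$ precisely when $v^{-1}\gamma\in\D^{re,-}_{J_1}$, so $S_A=\D^{re,-}_{J_1}\cap v\,\D^{re,-}_{J_1}$, the root set of $U^-_{J_1}\cap\dot v U^-_{J_1}\dot v^{-1}$. For $S_B$ I use the standard fact that $v\in W_{J_1}$ preserves $\D^{re,+}-\D^{re,+}_{J_1}$, which lets me write $\gamma=v\delta$ with $\delta\in\D^{re,+}-\D^{re,+}_{J_1}$ and reduces the membership condition to $w^{-1}\delta<0$; hence $S_B=v\bigl[(\D^{re,+}-\D^{re,+}_{J_1})\cap w\,\D^{re,-}\bigr]$. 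The hypothesis $w\in W_{J_2}$ now enters decisively: since all inversions of $w^{-1}\in W_{J_2}$ lie in $\D^{re,+}_{J_2}$, the conditions $\delta>0$ and $w^{-1}\delta<0$ force $w^{-1}\delta\in\D^{re,-}_{J_2}$, so $w\,\D^{re,-}$ may be replaced by $w\,\D^{re,-}_{J_2}$. This yields $S_B=v\bigl[(\D^{re,+}-\D^{re,+}_{J_1})\cap w\,\D^{re,-}_{J_2}\bigr]$, which is exactly the root set of $\dot v\bigl(U_{P^+_{J_1}}\cap\dot w U^-_{J_2}\dot w^{-1}\bigr)\dot v^{-1}$.

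Finally I would translate back. The disjoint decomposition $S=S_A\sqcup S_B$ exhibits the left-hand side as the product of the subgroup generated by $\{U_\gamma\}_{\gamma\in S_A}$ and that generated by $\{U_\gamma\}_{\gamma\in S_B}$; since $\dot v\in L_{J_1}$ normalizes $U_{P^+_{J_1}}$, the first factor lies in $U^-_{J_1}$ and the second in $U_{P^+_{J_1}}$, so the factorization is compatible with the semidirect product $U^-_{J_1}\ltimes U_{P^+_{J_1}}$ and is therefore unique, i.e. an $\odot$-decomposition. The step I expect to be the main obstacle is not the combinatorics above but the careful justification, in the (possibly infinite-dimensional) pro-unipotent setting, that intersecting the two root-generated subgroups yields exactly the subgroup generated by $\{U_\gamma\}_{\gamma\in S}$ and that the ordered product over $S_A$ then $S_B$ is bijective; this is where one must work with the filtration of the pro-unipotent groups and \cite[Theorem~5.2.3]{Kum} rather than with naive finite products.
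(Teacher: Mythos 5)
Your root-level combinatorics is correct, and in fact it mirrors the paper's proof step for step: the splitting $S=S_A\sqcup S_B$ corresponds to the paper's use of the factorization $(\dot v \dot{w}) U^-   (\dot v \dot{w})^{-1}  = (U^- \cap  (\dot v \dot{w}) U^-    (\dot v \dot{w})^{-1}) \odot (U^+ \cap  (\dot v \dot{w}) U^-  (\dot v \dot{w})^{-1})$ from \cite[Theorem~5.2.3]{Kum}; your identification of $S_A$ via $w\in{}^{J_1}W$ corresponds to the paper's observation that $U^-_{J_1}\cap \dot w U^+\dot w^{-1}=\{e\}$; and your identification of $S_B$ via $v\in W_{J_1}$ and $w\in W_{J_2}$ corresponds to $U_{P^+_{J_1}}\cap \dot v U^-\dot v^{-1}=\{e\}$ and the replacement of $U^-$ by $U^-_{J_2}$. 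However, there is a genuine gap exactly where you flag one, and flagging it does not discharge it. Your entire argument is premised on a dictionary under which (i) each subgroup in sight equals the subgroup generated by the root subgroups it contains, (ii) intersecting such subgroups corresponds to intersecting their root sets, and (iii) a disjoint splitting of the root set yields an $\odot$-factorization. None of this is furnished by \cite[Theorem~5.2.3]{Kum} or by the proof of Lemma~\ref{lem:JXw}: those results provide \emph{specific} unique factorizations into two named $T$-stable factors, not a general correspondence between closed sets of real roots and $T$-stable subgroups. In the infinite-dimensional setting such a correspondence is not formal; for instance, it is already a nontrivial claim that $U^-_{J_1} U_{P^+_{J_1}}  \cap (\dot v \dot w) U^-    (\dot v \dot w)^{-1}$ is generated by root subgroups at all, and that claim is essentially equivalent to the lemma one is trying to prove. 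Your proposed remedy (``the filtration of the pro-unipotent groups'') is also misdirected: the paper works with the \emph{minimal} Kac--Moody group, whose $U^\pm$ are not pro-unipotent; pro-unipotent completions belong to the maximal group.

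The way to close the gap is to run your three root computations at the level of groups, which is precisely what the paper does. First, intersect the factorization of $(\dot v \dot w) U^-    (\dot v \dot w)^{-1}$ into its $U^-$-part and $U^+$-part with $U^-_{J_1} U_{P^+_{J_1}}$: since $U^-\cap U^+=\{e\}$, a factorization of an element as (negative)$\cdot$(positive) is unique, and the Levi decomposition \eqref{eq:levi} then gives
\[
U^-_{J_1} U_{P^+_{J_1}}  \cap (\dot v \dot w) U^-    (\dot v \dot w)^{-1}
= \bigl(U^-_{J_1}  \cap (\dot v \dot w) U^-    (\dot v \dot w)^{-1}\bigr) \odot \bigl(U_{P^+_{J_1}}  \cap (\dot v \dot w) U^-    (\dot v \dot w)^{-1}\bigr).
\]
Next, identify the first factor with $U^-_{J_1} \cap  \dot v U^-_{J_1}  \dot v^{-1}$ by restricting the factorization of $U^-\cap \dot vU^-\dot v^{-1}$ and using $U^-_{J_1}\cap \dot wU^+\dot w^{-1}=\{e\}$ (this is your $S_A$ computation, recast as a statement about groups), and identify the second factor with $\dot v(U_{P^+_{J_1}} \cap  \dot wU^-_{J_2} \dot w^{-1} ) \dot v^{-1}$ by restricting the factorization of $U^+\cap(\dot v\dot w)U^-(\dot v\dot w)^{-1}$, using $U_{P^+_{J_1}}\cap \dot vU^-\dot v^{-1}=\{e\}$ and $w\in W_{J_2}$ (your $S_B$ computation). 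Each such step replaces an unproven ``root set determines subgroup'' assertion by an application of an actual unique-factorization theorem; that replacement is where the mathematical content of the lemma lies, and it is missing from your proposal.
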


\begin{proof}Note that $\ell (vw) =\ell(v) + \ell(w)$. We recall the following decompositions from \cite[Theorem~5.2.3]{Kum}:
\begin{align}
   (\dot v \dot{w}) U^-   (\dot v \dot{w})^{-1} & = (U^- \cap  (\dot v \dot{w}) U^-    (\dot v \dot{w})^{-1}) \odot (U^+ \cap  (\dot v \dot{w}) U^-  (\dot v \dot{w})^{-1}) \tag{$\diamondsuit1$};\\
U^- \cap  \dot v U^-  \dot v^{-1}  &=  \dot v(U^- \cap  \dot wU^+ \dot w^{-1}) \dot v^{-1} \odot (U^- \cap  (\dot v \dot{w}) U^- (\dot v \dot{w}) ^{-1}); \tag{$\diamondsuit2$}\\
U^+ \cap  (\dot v \dot{w}) U^-  (\dot v \dot{w})^{-1}  &=  \dot v(U^+ \cap  \dot wU^- \dot w^{-1}) \dot v^{-1} \odot (U^+ \cap  \dot v U^-  \dot v^{-1}) \tag{$\diamondsuit3$}.
\end{align}

Thanks to the Levi decomposition 
$
P^+_{J_1} = L_{J_1}  \ltimes U_{P^+_{J_1}}
$,
the decompositions are compatible with the restriction from $U^\pm$ to $U^\pm_{J_1}$ as well as from $U^+$ to $U_{P^+_{J_1}}$.

It follows from $(\diamondsuit1)$ that 
\[
U^-_{J_1} U_{P^+_{J_1}}  \cap  (\dot v \dot{w}) U^-  (\dot v \dot{w})^{-1} = (U^-_{J_1}  \cap  (\dot v \dot{w}) U^-  (\dot v \dot{w})^{-1}) \odot (U_{P^+_{J_1}} \cap  (\dot v \dot{w}) U^-  (\dot v \dot{w})^{-1}).
\]

Since $w \in {}^{J_1} W$, we have $U^-_{J_1}  \cap  \dot wU^+ \dot w^{-1} = \{e\}$. 
Therefore it follows from $(\diamondsuit2)$ that
\[
U^-_{J_1}  \cap  \dot v U_{J_1}^-  \dot v^{-1} = U^-_{J_1}  \cap  \dot v U^-  \dot v^{-1}= U^-_{J_1}  \cap  (\dot v \dot{w}) U^-  (\dot v \dot{w})^{-1}.
\]

Finally, since $v \in W_{J_1}$, we have $U_{P^+_{J_1}} \cap  \dot v U^-  \dot v^{-1} = \{ e \}$. Since $   w \in W_{J_2}$, it follows from $(\diamondsuit3)$ that 
\[
U_{P^+_{J_1}}  \cap (\dot v \dot{w}) U^-  (\dot v \dot{w})^{-1} =  \dot v(U_{P^+_{J_1}}  \cap  \dot wU^- \dot w^{-1}) \dot v^{-1} =  \dot v(U_{P^+_{J_1}}  \cap  \dot wU_{J_2}^- \dot w^{-1}) \dot v^{-1}
\]
The lemma follows. 
\end{proof}

The following result follows easily from Lemma~\ref{lem:JXw} and Lemma~\ref{lem:+LL}.

\begin{cor}\label{cor:JXw}
Let $J_1, J_2 \subset I$. Let $v \in W_{J_1}$ and $w \in W_{J_2} \cap {}^{J_1} W$. We have an isomorphism 
\begin{align*}
 (U^-_{J_1} \cap  \dot v U^-_{J_1}  \dot v^{-1}) \times   \dot v(U^+_{P^+_{J_1}} \cap  \dot wU^-_{J_2} \dot w^{-1} ) \dot v^{-1} &\longrightarrow {}^{J_1}\! \mathring{X}^{vw}, \quad (g_1, g_2)& \mapsto g_1g_2  (\dot v \dot{w})B^+.
\end{align*}
\end{cor}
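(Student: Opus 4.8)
The plan is to deduce the isomorphism by specializing Lemma~\ref{lem:JXw} to $J=J_1$ and to the element $vw$, and then to rewrite the two factors of its source using the identifications already established inside the proof of Lemma~\ref{lem:+LL}. First I would record that, since $v \in W_{J_1}$ and $w \in {}^{J_1}W$, the lengths add, $\ell(vw)=\ell(v)+\ell(w)$, so that $\dot{(vw)}=\dot v \dot w$. Applying Lemma~\ref{lem:JXw} with $J=J_1$ and $w$ replaced by $vw$ then produces an isomorphism
\[
\bigl(U^-_{J_1} \cap (\dot v \dot w) U^- (\dot v \dot w)^{-1}\bigr) \times \bigl(U_{P^+_{J_1}} \cap (\dot v \dot w) U^- (\dot v \dot w)^{-1}\bigr) \xrightarrow{\sim} {}^{J_1}\! \mathring{X}^{vw},
\]
with $(x_1, x_2) \mapsto x_1 x_2 (\dot v \dot w)B^+$.

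Next I would identify each of these two factors with the factor appearing in the statement. These are precisely the intermediate identities obtained in the proof of Lemma~\ref{lem:+LL}: equation $(\diamondsuit2)$, together with $U^-_{J_1}\cap \dot w U^+ \dot w^{-1}=\{e\}$ (valid because $w \in {}^{J_1}W$), gives
\[
U^-_{J_1}\cap(\dot v \dot w)U^-(\dot v \dot w)^{-1}=U^-_{J_1}\cap \dot v U^-_{J_1}\dot v^{-1},
\]
while equation $(\diamondsuit3)$, together with $U_{P^+_{J_1}}\cap \dot v U^-\dot v^{-1}=\{e\}$ (valid because $v \in W_{J_1}$) and the replacement of $U^-$ by $U^-_{J_2}$ inside $\dot w(\,\cdot\,)\dot w^{-1}$ (valid because $w \in W_{J_2}$), gives
\[
U_{P^+_{J_1}}\cap(\dot v \dot w)U^-(\dot v \dot w)^{-1}=\dot v\bigl(U_{P^+_{J_1}}\cap \dot w U^-_{J_2}\dot w^{-1}\bigr)\dot v^{-1}.
\]
Substituting these into the source of the displayed isomorphism yields exactly the map asserted in the corollary, where the $x_1$-factor becomes $g_1$ and the $x_2$-factor becomes $g_2$, and where $U^+_{P^+_{J_1}}=U_{P^+_{J_1}}$ since the unipotent radical of $P^+_{J_1}$ is generated by positive root groups.

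There is no genuine geometric difficulty in this argument; its entire content is the bookkeeping of factors, and since Lemma~\ref{lem:+LL} has already carried out that bookkeeping, the result follows at once. The one point I would verify with care is the replacement of $U^-$ by $U^-_{J_2}$ in the second factor, i.e. that every root $\alpha \in \Delta^{re,+}-\Delta^{re,+}_{J_1}$ with $w^{-1}\alpha \in \Delta^{re,-}$ automatically satisfies $w^{-1}\alpha \in \Delta^{re,-}_{J_2}$. This holds because $w \in W_{J_2}$ preserves $\Delta^{re,+}-\Delta^{re,+}_{J_2}$, so $w^{-1}\alpha<0$ forces $\alpha \in \Delta^{re,+}_{J_2}$ and hence $w^{-1}\alpha \in \Delta^{re,-}_{J_2}$.
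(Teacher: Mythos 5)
Your proof is correct and follows essentially the same route as the paper, which simply notes that the corollary "follows easily from Lemma~\ref{lem:JXw} and Lemma~\ref{lem:+LL}": you apply Lemma~\ref{lem:JXw} to $vw$ (using $\ell(vw)=\ell(v)+\ell(w)$) and then identify the two factors via the identities established in (the proof of) Lemma~\ref{lem:+LL}. Your careful check that $w\in W_{J_2}$ permits replacing $U^-$ by $U^-_{J_2}$ is exactly the point the paper handles at the end of the proof of Lemma~\ref{lem:+LL}, so nothing is missing.
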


\begin{lem}\label{lem:J1B}
Let $v \in W_{J_1}$ and $w \in W_{J_2} \cap {}^{J_1} W$. We have 
\begin{align*}
U^+_{J_1} U_{P^-_{J_1}} \cap  (\dot v \dot w) U^-  (\dot v \dot w)^{-1} 
= &(U^+_{J_1} \cap  \dot v U^-_{J_1}  \dot v^{-1}) \odot   \dot v(U^-_{{J_2}} \cap U_{P^-_{J_1}}  \cap  \dot w U_{J_2}^-  \dot w^{-1}) \dot v^{-1} \\
 &  \odot  \dot v(U_{P^-_{J_2}} \cap U_{P^-_{J_1}}  \cap  \dot w U^-  \dot w^{-1})  \dot v^{-1}.
\end{align*}
\end{lem}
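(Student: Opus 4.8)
The plan is to follow the template of the proof of Lemma~\ref{lem:+LL}, but to carry out the factorization in \emph{two nested stages}: first separate the Levi factor $L_{J_1}$ from the unipotent radical $U_{P^-_{J_1}}$ of $P^-_{J_1}$, and then decompose the radical part further along the Levi decomposition attached to $J_2$. Throughout I would use the decompositions $(\diamondsuit 1)$--$(\diamondsuit 3)$ of \cite[Theorem~5.2.3]{Kum} together with the Levi decomposition \eqref{eq:levi-}, exploiting as before that these are compatible with restriction from $U^\pm$ to the subgroups $U^\pm_{J_1}$, $U_{P^-_{J_1}}$ and, at the second stage, $U^-_{J_2}$, $U_{P^-_{J_2}}$.

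For the first stage, note that $U^+_{J_1}U_{P^-_{J_1}}$ is the unipotent radical of ${}^{J_1}B^-$ and that $L_{J_1}$ normalizes $U_{P^-_{J_1}}$, so the Levi decomposition \eqref{eq:levi-} yields the factorization $U^+_{J_1}U_{P^-_{J_1}}\cap(\dot v\dot w)U^-(\dot v\dot w)^{-1}=(U^+_{J_1}\cap(\dot v\dot w)U^-(\dot v\dot w)^{-1})\odot(U_{P^-_{J_1}}\cap(\dot v\dot w)U^-(\dot v\dot w)^{-1})$, compatibility being guaranteed by $(\diamondsuit 1)$ exactly as in Lemma~\ref{lem:+LL}. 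For the Levi factor I would argue by signs: since $w\in{}^{J_1}W$, the element $w^{-1}$ preserves the sign of every root of $\Delta^{re}_{J_1}$, and since $v\in W_{J_1}$ preserves $\Delta^{re}_{J_1}$, for $\alpha\in\Delta^{re,+}_{J_1}$ one has $(vw)^{-1}\alpha<0$ if and only if $v^{-1}\alpha<0$; this gives $U^+_{J_1}\cap(\dot v\dot w)U^-(\dot v\dot w)^{-1}=U^+_{J_1}\cap\dot vU^-_{J_1}\dot v^{-1}$, the first factor of the claim. For the radical factor, $v\in W_{J_1}$ normalizes $U_{P^-_{J_1}}$, so conjugating by $\dot v^{-1}$ gives $U_{P^-_{J_1}}\cap(\dot v\dot w)U^-(\dot v\dot w)^{-1}=\dot v\,(U_{P^-_{J_1}}\cap\dot wU^-\dot w^{-1})\,\dot v^{-1}$.

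The second stage decomposes $U_{P^-_{J_1}}\cap\dot wU^-\dot w^{-1}$ into the two remaining factors. Here I would use the Levi decomposition $U^-=U^-_{J_2}\ltimes U_{P^-_{J_2}}$ for $J_2$; since $w\in W_{J_2}$ stabilizes $L_{J_2}$ and normalizes $U_{P^-_{J_2}}$, we have $\dot wU^-\dot w^{-1}=(\dot wU^-_{J_2}\dot w^{-1})\,U_{P^-_{J_2}}$. Intersecting with $U_{P^-_{J_1}}$ and invoking $(\diamondsuit 1)$ once more (now for $J_2$) gives $U_{P^-_{J_1}}\cap\dot wU^-\dot w^{-1}=(U^-_{J_2}\cap U_{P^-_{J_1}}\cap\dot wU^-_{J_2}\dot w^{-1})\odot(U_{P^-_{J_2}}\cap U_{P^-_{J_1}}\cap\dot wU^-\dot w^{-1})$, and conjugating back by $\dot v$ produces the second and third factors. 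The feature that makes \emph{both} radical factors survive here---in contrast with the single radical factor of Lemma~\ref{lem:+LL}---is the sign analysis on negative roots: a root $\alpha\in\Delta^{re,-}-\Delta^{re,-}_{J_1}$ lying outside $\Delta_{J_2}$ automatically satisfies $w^{-1}\alpha<0$ (as $w\in W_{J_2}$ preserves signs off $\Delta_{J_2}$), so it contributes to the $U_{P^-_{J_2}}$-factor, while a root inside $\Delta_{J_2}$ satisfies $w^{-1}\alpha<0$ only conditionally and contributes to the $U^-_{J_2}$-factor.

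The main obstacle I anticipate is not conceptual but organizational: one must check that the two nested Levi decompositions (first for $J_1$, then for $J_2$) are \emph{simultaneously} compatible with the intersection by $\dot wU^-\dot w^{-1}$ and produce genuine unique $\odot$-factorizations, rather than mere equalities of the underlying sets of root subgroups. As in Lemma~\ref{lem:+LL}, this rests on the directed (pro-)unipotent product structure supplied by \cite[Theorem~5.2.3]{Kum} and on the restriction-compatibility of $(\diamondsuit 1)$--$(\diamondsuit 3)$ with the Levi factors; once that compatibility is in place, the identity reduces to the elementary sign computations indicated above.
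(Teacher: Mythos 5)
Your proposal is correct and follows essentially the same route as the paper's proof: a first $(\diamondsuit 1)$-type splitting of $U^+_{J_1}U_{P^-_{J_1}}\cap(\dot v\dot w)U^-(\dot v\dot w)^{-1}$ into a Levi factor and a radical factor, conjugation of the radical factor by $\dot v$ using that $v\in W_{J_1}$ normalizes $U_{P^-_{J_1}}$, and then a second splitting of $U_{P^-_{J_1}}\cap\dot wU^-\dot w^{-1}$ via the Levi decomposition of $P^-_{J_2}$. The only (immaterial) difference is that you identify $U^+_{J_1}\cap(\dot v\dot w)U^-(\dot v\dot w)^{-1}=U^+_{J_1}\cap\dot vU^-_{J_1}\dot v^{-1}$ by a direct root-sign argument, where the paper cites $(\diamondsuit 3)$.
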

\begin{proof}
Note that $\ell (vw) =\ell(v) + \ell(w)$. We recall again the following decompositions from \cite[Theorem~5.2.3]{Kum}:
\begin{align}
   (\dot v \dot{w}) U^-  (\dot v \dot{w})^{-1} & =  (U^+ \cap  (\dot v \dot{w}) U^-  (\dot v \dot{w})^{-1})  \odot (U^- \cap  (\dot v \dot{w}) U^-  (\dot v \dot{w})^{-1}) \tag{$\diamondsuit1$};\\
%U^- \cap xU^- x^{-1}  &= x(U^- \cap yU^+y^{-1})x^{-1} \odot (U^- \cap wU^-w^{-1}); \tag{$\diamondsuit2$}\\
U^+ \cap  (\dot v \dot{w}) U^-  (\dot v \dot{w}) ^{-1}  &=  \dot v(U^+ \cap  \dot wU^- \dot w^{-1}) \dot v^{-1} \odot (U^+ \cap  \dot v U^-  \dot v^{-1}) \tag{$\diamondsuit3$}.
\end{align}
 Thanks to the Levi decomposition 
$
P^+_{J_1} = L_{J_1}  \ltimes U_{P^+_{J_1}}
$,
the decompositions above are compatible with the restriction from $U^\pm$ to $U^\pm_{J_1}$ as well as from $U^+$ to $U_{P^+_{J_1}}$.

It follows from $(\diamondsuit1)$ that 
\[
U^+_{J_1} U_{P^-_{J_1}}  \cap  (\dot v \dot{w})  U^-  (\dot v \dot{w})^{-1} = (U^+_{J_1}  \cap  (\dot v \dot{w}) U^-  (\dot v \dot{w})^{-1}) \odot (U_{P^-_{J_1}} \cap  (\dot v \dot{w}) U^-  (\dot v \dot{w})^{-1}).
\]

Since $w \in  W_{J_2} \cap {}^{J_1} W$ and $v \in W_{J_1}$, it follows from $(\diamondsuit3)$ that
\[
U^+_{J_1}  \cap  (\dot v \dot{w}) U^-  (\dot v \dot{w})^{-1} = U^+_{J_1}  \cap  \dot v U^-  \dot v^{-1} = U^+_{J_1}  \cap  \dot v U_{J_1}^-  \dot v^{-1}.
\]
Since $v \in W_{J_1}$, we have 
\[
U_{P^-_{J_1}} \cap  (\dot v \dot{w}) U^-  (\dot v \dot{w})^{-1} = \dot  v(U_{P^-_{J_1}} \cap  \dot w U^-  \dot w^{-1}) \dot v^{-1}.
\]

Thanks to the Levi decomposition of $P^-_{J_2}$, we further have 
\begin{align*}
& U_{P^-_{J_1}} \cap  (\dot v \dot{w}) U^-  (\dot v \dot{w})^{-1} \\
= & \,  \dot v(U_{P^-_{J_1}} \cap  \dot w U^-  \dot w^{-1}) \dot v^{-1} \\
= &\,  \dot v(U^-_{{J_2}} \cap U_{P^-_{J_1}}  \cap  \dot w U^-  \dot w^{-1}) \dot v^{-1} \odot  \dot v(U_{P^-_{J_2}} \cap U_{P^-_{J_1}}  \cap   \dot{w} U^-   \dot{w}^{-1})  \dot v^{-1} \\
= &\,  \dot v(U^-_{{J_2}} \cap U_{P^-_{J_1}}  \cap  \dot w U_{J_2}^-  \dot w^{-1}) \dot v^{-1} \odot  \dot v(U_{P^-_{J_2}} \cap U_{P^-_{J_1}}  \cap   \dot{w} U^-   \dot{w}^{-1})  \dot v^{-1}.
\end{align*}
The lemma is proved.
\end{proof}
%%%%%%%%%%%%%%%%%%%%%%%%%%%%%%%%%%%%%%%%%%%%%%%%%%%%%
%%%%%%%%%%%%%%%%%%%%%%%%%%%%%%%%%%%%%%%%%%%%%%%%%%%%%

%%%%%%%%%%%%%%%%%%%%%%%%%%%%%%%%%%%%%%%%%%%%%%%%%%%%%
%%%%%%%%%%%%%%%%%%%%%%%%%%%%%%%%%%%%%%%%%%%%%%%%%%%%%
\subsection{Product of Parabolic subgroups}
Let $J_1, J_2 \subset I$. We study the decomposition of 
$
 P^+_{J_1} P^+_{J_2} /B^+ 
$
with respect to the $J_1$-Schubert cells and the opposite $J_1$-Schubert cells.  

We first consider the decomposition into the $J_1$-Schubert cells. 

\begin{proposition}\label{prop:LL1}
Let $J_1, J_2 \subset I$. Then we have 
$$
P^+_{J_1} P^+_{J_2} = L_{J_1} L_{J_2} B^+=\bigsqcup_{\tw \in W_{J_1} W_{J_2}}\, {}^{J_1} \!B^+ \dot \tw B^+.
$$
\end{proposition}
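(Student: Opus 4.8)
The plan is to separate the two assertions: first the purely group-theoretic identity $P^+_{J_1}P^+_{J_2}=L_{J_1}L_{J_2}B^+$, and then the decomposition into $J_1$-Schubert cells, which I will obtain from a left-invariance argument combined with the Bruhat/Billig--Dyer decompositions already available.

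\textbf{The first equality.} The key elementary observation is that $L_J B^+ = P^+_J$ for any $J$. Indeed, by the Levi decomposition \eqref{eq:levi} we have $P^+_J = L_J \ltimes U_{P^+_J}$ with $U_{P^+_J}\subseteq U^+\subseteq B^+$; writing $B^+ = T U^+$ with $T\subseteq L_J$ and $U^+ = U^+_J\, U_{P^+_J}$ with $U^+_J\subseteq L_J$, one gets $L_J B^+ = L_J U^+ = L_J U_{P^+_J} = P^+_J$. Applying this with $J=J_1$ gives $P^+_{J_1}=L_{J_1}B^+$, so that $P^+_{J_1}P^+_{J_2} = L_{J_1} B^+ P^+_{J_2} = L_{J_1}P^+_{J_2}$ because $B^+\subseteq P^+_{J_2}$; applying it with $J=J_2$ gives $L_{J_1}P^+_{J_2} = L_{J_1}L_{J_2}B^+$. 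This settles the first equality.

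\textbf{Reduction to a stability statement.} Set $\Omega := P^+_{J_1}P^+_{J_2}/B^+\subseteq\CB$. The crucial point is that ${}^{J_1}B^+\subseteq P^+_{J_1}$: from the given form ${}^{J_1}B^+ = B^-_{J_1}\ltimes U_{P^+_{J_1}}$, both factors lie in $P^+_{J_1}=L_{J_1}\ltimes U_{P^+_{J_1}}$, since $B^-_{J_1}\subseteq L_{J_1}$ and $U_{P^+_{J_1}}$ is the unipotent radical of $P^+_{J_1}$. Consequently $\Omega$ is stable under left multiplication by ${}^{J_1}B^+$. By the Billig--Dyer decomposition \eqref{eq:BD1}, $\CB$ is the disjoint union of the $J_1$-Schubert cells ${}^{J_1}\mathring{X}^{\tw}={}^{J_1}B^+\dot{\tw}B^+/B^+$, each a single ${}^{J_1}B^+$-orbit; hence $\Omega$ is a disjoint union of whole $J_1$-Schubert cells, say $\Omega=\bigsqcup_{\tw\in S}{}^{J_1}\mathring{X}^{\tw}$ for some $S\subseteq W$. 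It remains to show $S=W_{J_1}W_{J_2}$.

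\textbf{Identifying the index set.} For any $\tw\in W$, the point $\dot{\tw}B^+/B^+$ lies in ${}^{J_1}\mathring{X}^{\tw}$ (as $e\in{}^{J_1}B^+$), while ${}^{J_1}B^+\subseteq P^+_{J_1}$ gives ${}^{J_1}B^+\dot{\tw}B^+\subseteq P^+_{J_1}\dot{\tw}B^+$. Therefore ${}^{J_1}\mathring{X}^{\tw}\subseteq\Omega$ if and only if $\dot{\tw}\in P^+_{J_1}P^+_{J_2}$, so I must prove $\dot{\tw}\in P^+_{J_1}P^+_{J_2}\iff \tw\in W_{J_1}W_{J_2}$. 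The implication $\Leftarrow$ is direct: if $\tw=vw$ with $v\in W_{J_1}$, $w\in W_{J_2}$, then $\dot{\tw}$ differs from $\dot v\dot w$ by an element of $T\subseteq L_{J_1}$, and $\dot v\in L_{J_1}$, $\dot w\in L_{J_2}$, whence $\dot{\tw}\in L_{J_1}L_{J_2}=P^+_{J_1}P^+_{J_2}/\!\sim$ as established above. For $\Rightarrow$ I will invoke the Bruhat decomposition of $G$ relative to its (refined) Tits system, which gives $P^+_{J_1}P^+_{J_2}=P^+_{J_1}\dot e\, P^+_{J_2}=\bigsqcup_{x\in W_{J_1}W_{J_2}}B^+\dot x B^+$; since $\dot{\tw}\in B^+\dot{\tw}B^+$, membership of $\dot{\tw}$ in $P^+_{J_1}P^+_{J_2}$ forces $\tw\in W_{J_1}W_{J_2}$. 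Combining, $S=W_{J_1}W_{J_2}$ and the proposition follows.

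\textbf{Expected main obstacle.} The only step that is not formal is the last equivalence $\{\tw:\dot{\tw}\in P^+_{J_1}P^+_{J_2}\}=W_{J_1}W_{J_2}$, i.e. that a product of two standard parabolics is a union of Bruhat cells indexed exactly by the subset $W_{J_1}W_{J_2}$ of $W$. In the possibly infinite-type Kac-Moody setting this must be drawn from the structure theory of the Tits system of the minimal group $G$ (Kumar), rather than from a finite-type computation. The other ingredient worth flagging explicitly is the inclusion ${}^{J_1}B^+\subseteq P^+_{J_1}$: though immediate from the Levi form of ${}^{J_1}B^+$, it is precisely what makes $\Omega$ a union of $J_1$-Schubert cells and drives the entire cell-by-cell matching.
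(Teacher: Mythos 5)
Your proposal is correct, but it takes a genuinely different route from the paper. The paper proves both inclusions by hand: it decomposes $L_{J_2}B^+$ using the Bruhat decomposition of $L_{J_2}$ relative to the parabolic $L_{J_2}\cap P^+_{J_1}$, then pushes $L_{J_1}L_{J_2}B^+$ into $\bigcup\, {}^{J_1}B^+\dot v\dot wB^+$ by explicit manipulations with Levi decompositions and conjugation (e.g.\ $\dot v$ normalizing $U_{P^+_{J_1}}T$, and $U_{P^+_{J_1}}\cap\dot wB^-\dot w^{-1}\subseteq L_{J_2}$ for $w\in W_{J_2}$), and proves the reverse inclusion cell by cell in the same explicit style. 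You replace all of this with two soft observations: (a) since ${}^{J_1}B^+\subseteq P^+_{J_1}$, the set $\Omega=P^+_{J_1}P^+_{J_2}/B^+$ is stable under ${}^{J_1}B^+$, and by \eqref{eq:BD1} the ${}^{J_1}B^+$-orbits on $\CB$ are exactly the $J_1$-Schubert cells, so $\Omega$ is a union of whole cells, reducing everything to deciding which representatives $\dot\tw$ lie in $P^+_{J_1}P^+_{J_2}$; and (b) the classical Tits-system fact $P^+_{J_1}P^+_{J_2}=\bigsqcup_{x\in W_{J_1}W_{J_2}}B^+\dot xB^+$, which pins down the index set. Your argument is shorter and makes the structural reason for the statement transparent, at the price of importing an external result: you should either cite it in the generality needed (it holds for the refined Tits system of the minimal Kac-Moody group, cf.\ Kumar or Marquis, and goes back to Bourbaki's treatment of products of standard parabolics) or note that it follows from axiom (T3), $B^+sB^+wB^+\subseteq B^+swB^+\cup B^+wB^+$, together with the lifting property, by induction on $\ell(v)$; the paper's computation avoids this dependency and stays within the decompositions it has already set up. Two cosmetic points: the expression ``$L_{J_1}L_{J_2}=P^+_{J_1}P^+_{J_2}/\!\sim$'' is not meaningful as written --- what you need and what your first paragraph actually gives is $L_{J_1}L_{J_2}\subseteq L_{J_1}L_{J_2}B^+=P^+_{J_1}P^+_{J_2}$; and in the ``$\Leftarrow$'' step it is cleaner to write $\tw=vw$ with $v\in W_{J_1}$ and $w\in W_{J_2}\cap{}^{J_1}W$, so that $\ell(vw)=\ell(v)+\ell(w)$ and $\dot\tw=\dot v\dot w$ exactly, rather than invoking the ambiguity by an element of $T$ (which is also fine, since $T\subseteq L_{J_1}$).
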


\begin{proof}
It follows from \eqref{eq:BD1} that $\bigcup_{\tw \in W_{J_1} W_{J_2}} \,{}^{J_1} \!B^+ \dot \tw B^+$ is a   disjoint union. We have 
\begin{align*} 
L_{J_2} B^+ %& = \bigsqcup_{w \in W_{J_2}} B^+ \dot{w} B^+ \\
&=\bigsqcup_{w \in W_{J_2} \cap {}^{J_1} W} (L_{J_2} \cap P_{J_1}) \dot w B^+\\
 &=\bigsqcup_{w \in W_{J_2} \cap {}^{J_1} W} (L_{J_2} \cap L_{J_1})  (L_{J_2} \cap U_{P^+_{J_1}}) \dot w B^+.
\end{align*} 
Thus
\begin{align*} 
L_{J_1} L_{J_2} B^+ &=\bigcup_{w \in W_{J_2} \cap {}^{J_1} W} L_{J_1} (L_{J_2} \cap U_{P^+_{J_1}}) \dot w B^+ \\ 
&=\bigcup_{v \in W_{J_1}, w \in W_{J_2} \cap {}^{J_1} W} (L_{J_1} \cap U^-) \dot v (L_{J_1} \cap U^+) (L_{J_2} \cap U_{P^+_{J_1}}) \dot w B^+ \\ 
& \subset \bigcup_{v \in W_{J_1}, w \in W_{J_2} \cap {}^{J_1} W} (L_{J_1} \cap U^-) \dot v  U_{P^+_{J_1}}  (L_{J_1} \cap U^+) \dot w B^+ \\ 
&=\bigcup_{v \in W_{J_1}, w \in W_{J_2} \cap {}^{J_1} W} (L_{J_1} \cap U^-) U_{P^+_{J_1}} \dot v  \dot w \bigl(\dot w \i (L_{J_1} \cap U^+) \dot w) B^+ \\ 
&=\bigcup_{v \in W_{J_1}, w \in W_{J_2} \cap {}^{J_1} W} {}^{J_1} B^+ \dot v \dot w B^+.
\end{align*} 

Now we proceed with the reverse inclusion. Note that any element in $W_{J_1} W_{J_2}$ can be written in a unique way as $vw$ for some $v \in W_{J_1}$ and $w \in W_{J_2} \cap {}^{J_1} W$. By definition  and the Levi decomposition \eqref{eq:levi}, ${}^{J_1} B^+=(L_{J_1} \cap U^-) U_{P^+_{J_1}} T$. Since $v \in W_{J_1}$, the conjugation action of $\dot v$ stabilizes $U_{P^+_{J_1}} T$. Thus 
$$
{}^{J_1} B^+ \dot v \dot w B^+=(L_{J_1} \cap U^-) \dot v (U_{P^+_{J_1}} \dot w B^+)=(L_{J_1} \cap U^-) \dot v(U_{P^+_{J_1}} \cap \dot w B^- \dot w \i) \dot w B^+.
$$
Since $w \in W_{J_2}$, we have $U_{P^+_{J_1}} \cap \dot w B^- \dot w \i \subset U_{P^+_{J_1}} \cap L_{J_2}$. Thus 
$$
{}^{J_1} B^+ \dot v \dot w B^+ \subset (L_{J_1} \cap U^-) \dot v (L_{J_2} \cap U^+) \dot w B^+ \subset L_{J_1} L_{J_2} B^+.
$$
The statement is proved. 
\end{proof}

We then consider the decomposition into opposite $J_1$-Schubert cells. 
\begin{prop}\label{prop:LL}
We have 
$$
P^+_{J_1} P^+_{J_2} =\bigsqcup_{\tw \in W_{J_1} W_{J_2}}\, {}^{J_1} \!B^- \dot \tw B^+ \cap P^+_{J_1} P^+_{J_2}.
$$
\end{prop}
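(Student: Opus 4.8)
The plan is to split the claim into an easy disjointness statement and the genuinely content-bearing covering inclusion, and to deduce the latter from the decompositions already established together with a combinatorial down-set property. Disjointness is immediate: by the Birkhoff-type decomposition \eqref{eq:BD1} we have $\CB=\bigsqcup_{u\in W}{}^{J_1}\mathring{X}_u$, so the subsets ${}^{J_1}B^-\dot{\tw}B^+$ of $G$ are pairwise disjoint, and hence so are the sets ${}^{J_1}B^-\dot{\tw}B^+\cap P^+_{J_1}P^+_{J_2}$; each of these is trivially contained in $P^+_{J_1}P^+_{J_2}$. All that remains is the covering inclusion
\[
P^+_{J_1}P^+_{J_2}\subseteq\bigcup_{\tw\in W_{J_1}W_{J_2}}{}^{J_1}B^-\dot{\tw}B^+ .
\]

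To prove this I would work in the flag variety. As both sides are right $B^+$-stable and contain $B^+$, it suffices to show that the image of $P^+_{J_1}P^+_{J_2}$ in $\CB$ lies in $\bigcup_{\tw\in W_{J_1}W_{J_2}}{}^{J_1}\mathring{X}_{\tw}$. Let $p$ be such a point. By Proposition~\ref{prop:LL1} we have $P^+_{J_1}P^+_{J_2}/B^+=\bigsqcup_{\tw\in W_{J_1}W_{J_2}}{}^{J_1}\mathring{X}^{\tw}$, so $p\in{}^{J_1}\mathring{X}^{\tw}$ for some $\tw\in W_{J_1}W_{J_2}$; and by \eqref{eq:BD1} we have $p\in{}^{J_1}\mathring{X}_u$ for a unique $u\in W$. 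Since $p$ lies in ${}^{J_1}\mathring{X}^{\tw}\cap{}^{J_1}\mathring{X}_u$, Proposition~\ref{prop:intersection} (with $J=J_1$) forces $u\,{}^{J_1}\!\!\le\tw$. Thus the covering inclusion reduces to the purely combinatorial assertion that $W_{J_1}W_{J_2}$ is a down-set for the twisted Bruhat order $\,{}^{J_1}\!\!\le\,$.

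This down-set property is the step I expect to be the real obstacle; I would prove it by reduction to the ordinary Bruhat order $\le$. First, $W_{J_1}W_{J_2}$ is a down-set for $\le$: writing $\tw=xy$ with $x\in W_{J_1}$ and $y\in W_{J_2}\cap{}^{J_1}W$, one has $\ell(\tw)=\ell(x)+\ell(y)$, so juxtaposing reduced words of $x$ (in the generators indexed by $J_1$) and of $y$ (in those indexed by $J_2$) gives a reduced word for $\tw$; any $u\le\tw$ appears as a subword, which splits into a $J_1$-part lying in $W_{J_1}$ and a $J_2$-part lying in $W_{J_2}$, so $u\in W_{J_1}W_{J_2}$. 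To upgrade this to $\,{}^{J_1}\!\!\le\,$, it is enough to check that the generating covering relations $s_\beta\tw\,{}^{J_1}\!\!<\tw$, with $\beta\in\Psi^+_{J_1}$ and $\tw^{-1}(\beta)\in\D^{re,-}$, send an element $\tw\in W_{J_1}W_{J_2}$ to another element of $W_{J_1}W_{J_2}$. If $\beta\in\D^{re,-}_{J_1}$ then $s_\beta\in W_{J_1}$ and $s_\beta\tw\in W_{J_1}W_{J_2}$ at once. If $\beta\in\D^{re,+}-\D^{re,+}_{J_1}$, write $\tw=vy$ in normal form; since $W_{J_1}$ permutes $\D^{re,+}-\D^{re,+}_{J_1}$, the root $\beta'=v^{-1}(\beta)$ again lies there and satisfies $y^{-1}(\beta')=\tw^{-1}(\beta)\in\D^{re,-}$, so $s_{\beta'}y<y$ for $\le$. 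The ordinary down-set property applied to $y\in W_{J_2}$ gives $s_{\beta'}y\in W_{J_1}W_{J_2}$, whence $s_\beta\tw=v\,s_{\beta'}y\in W_{J_1}W_{J_2}$. Running this along a chain of such covering relations from $\tw$ down to $u$ proves the down-set property for $\,{}^{J_1}\!\!\le\,$, which completes the covering inclusion and therefore the proposition.

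Finally, I would record, combining Lemma~\ref{lem:JXw} with the factorization of Lemma~\ref{lem:J1B}, the explicit coordinates on each stratum that underlie this argument: for $\tw=vw$ with $w\in W_{J_2}\cap{}^{J_1}W$, the opposite cell ${}^{J_1}\mathring{X}_{\tw}$ is swept out by three factors, of which $x_1\dot v$ lies in $L_{J_1}$ and $x_2$ lies in $U^-_{J_2}\subset L_{J_2}$, while the transverse factor $x_3$ is valued in $U_{P^-_{J_2}}\cap U_{P^-_{J_1}}$. Hence the locus where $x_3$ is trivial is visibly contained in $L_{J_1}L_{J_2}\subseteq P^+_{J_1}P^+_{J_2}$; in particular every stratum ${}^{J_1}B^-\dot{\tw}B^+\cap P^+_{J_1}P^+_{J_2}$ is non-empty, containing the $T$-fixed point $\dot{\tw}B^+$. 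This provides a geometric counterpart to the combinatorial decomposition above, though only the disjointness and the covering inclusion are needed for the statement itself.
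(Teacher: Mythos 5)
Your proof is correct, but it takes a genuinely different route from the paper's. The paper argues entirely at the group level: it decomposes $P^+_{J_2}$ via the Birkhoff decomposition of $L_{J_2}$ relative to the parabolic $P^-_{J_1}\cap L_{J_2}$, absorbs $U^+_{J_1}$ into the relevant factor (claim (a) in the paper's proof), arrives at the explicit identity \eqref{eq:PP}, and then checks that each piece $(U^+_{J_1}\cap \dot vU^-_{J_1}\dot v\i)\,\dot v\,(U^-_{J_2}\cap U_{P^-_{J_1}})\dot wB^+$ lies in a single set ${}^{J_1}\!B^-\dot v\dot wB^+$; disjointness comes from \eqref{eq:BD1}, exactly as in your argument. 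You instead pass to the flag variety and combine three ingredients already available at this point in the paper --- Proposition~\ref{prop:LL1}, the decomposition \eqref{eq:BD1}, and Proposition~\ref{prop:intersection} --- to reduce the covering inclusion to the purely combinatorial assertion that $W_{J_1}W_{J_2}$ is a down-set for ${}^{J_1}\!\le$. Your verification of that assertion is sound: the case split on $\b\in\D^{re,-}_{J_1}$ versus $\b\in\D^{re,+}-\D^{re,+}_{J_1}$, using that $W_{J_1}$ permutes $\D^{re,+}-\D^{re,+}_{J_1}$ and that $W_{J_2}$ is a down-set for ordinary Bruhat order, is correct (one terminological quibble: the relations $s_\b\tw\,{}^{J_1}\!\!<\tw$ are generating relations of the order, not covering relations, but your induction only needs that they generate). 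There is also no circularity, since Propositions~\ref{prop:LL1} and \ref{prop:intersection} are proved before and independently of this statement. The trade-off is this: the paper's computational proof establishes, inside the proof, the equality ${}^{J_1}\mathring{X}_{vw}\cap P^+_{J_1}P^+_{J_2}/B^+=(U^+_{J_1}\cap\dot vU^-_{J_1}\dot v\i)\,\dot v\,(U^-_{J_2}\cap U_{P^-_{J_1}})\dot wB^+$, and it is precisely this byproduct (not the bare statement of the proposition) that the proof of Proposition~\ref{prop:-LL}, and hence Theorem~\ref{thm:BBatlas1}, invokes. Your route is shorter, more conceptual, and isolates a combinatorial fact of independent interest, but it does not produce that parametrization; your closing remark via Lemmas~\ref{lem:JXw} and \ref{lem:J1B} gestures at it, so if your proof replaced the paper's, the explicit description of each intersection would still need to be proved separately for the downstream arguments.
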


\begin{proof}

Note that $P^-_{J_1} \cap L_{J_2}$ is an opposite parabolic subgroup of $L_{J_2}$. By the Birkhoff decomposition of $L_{J_2}$ and the Levi decomposition of $P^+_{J_2}$, we have 
\[
P^+_{J_2}  = \bigsqcup_{w \in W_{J_2} \cap {}^{J_1}W} (P^{-}_{J_1} \cap L_{J_2})   \dot w B^+=\bigsqcup_{w \in W_{J_2} \cap {}^{J_1}W} (L_{J_1} \cap L_{J_2}) (U^{-}_{J_2} \cap U_{P^{-}_{J_1}})   \dot w B^+.
\]

Hence 
\begin{align*}
P^+_{J_1} P^+_{J_2} &
= \bigcup_{w \in W_{J_2} \cap {}^{J_1}W}  L_{J_1} (U^{-}_{J_2} \cap U_{P^{-}_{J_1}})   \dot w B^+\\
& =  \bigcup_{v \in W_{J_1}, w \in W_{J_2} \cap {}^{J_1}W}  U^+_{J_1}  \dot v U^+_{J_1}  (U^{-}_{J_2} \cap U_{P^{-}_{J_1}})   \dot w B^+ \\
& = \bigcup_{v \in W_{J_1}, w \in W_{J_2} \cap {}^{J_1}W}  (U^+_{J_1} \cap  \dot v U^-_{J_1}  \dot v^{-1} )  \dot v U^+_{J_1}  (U^{-}_{J_2} \cap U_{P^{-}_{J_1}})   \dot w B^+. \\ 
\end{align*}

We show that 

(a) $U^+_{J_1}  (U^{-}_{J_2} \cap U_{P^{-}_{J_1}})  \dot wB^+   =  (U^{-}_{J_2} \cap U_{P^{-}_{J_1}})  \dot wB^+$.

We have the decomposition $U^+_{J_1} = (U^+_{J_1}\cap U^{+}_{J_2})(U^+_{J_1}\cap U_{P^+_{J_2}})$.
Since $(U^{-}_{J_2} \cap U_{P^{-}_{J_1}})  \dot w \subset L_{J_2}$, we have $
 U_{P^{+}_{J_2}} (U^{-}_{J_2} \cap U_{P^{-}_{J_1}})   \dot w =  (U^{-}_{J_2} \cap U_{P^{-}_{J_1}})   \dot w  U_{P^{+}_{J_2}}$ and
\[
 (U^+_{J_1}\cap U^{+}_{J_2})  (U^+_{J_1}\cap U_{P^{+}_{J_2}}) (U^{-}_{J_2} \cap U_{P^{-}_{J_1}})   \dot wB^+  
= (U^+_{J_1}\cap U^{+}_{J_2})    (U^{-}_{J_2} \cap U_{P^{-}_{J_1}})  \dot w B^+.
\]
Thanks to the Levi decomposition of $L_{J_2} \cap P^-_{J_1}$, we further have 
\[
 (U^+_{J_1}\cap U^{+}_{J_2})    (U^{-}_{J_2} \cap U_{P^{-}_{J_1}})=  (U^{-}_{J_2} \cap U_{P^{-}_{J_1}})(U^+_{J_1}\cap U^{+}_{J_2}).
\] 
Since $w \in {}^{J_1}W$, we have $(U^+_{J_1}\cap U^{+}_{J_2})      \dot w B^+ =  \dot wB^+$. 

Thus (a) is proved.

Now we have 
\begin{equation}\label{eq:PP}
P^+_{J_1} P^+_{J_2}    = \bigcup_{v \in W_{J_1}, w \in W_{J_2} \cap {}^{J_1}W}  (U^+_{J_1} \cap  \dot v U^-_{J_1} v^{-1} )  \dot v   (U^{-}_{J_2} \cap U_{P^{-}_{J_1}})   \dot w B^+.
\end{equation}

Since $  \dot v   (U^{-}_{J_2} \cap U_{P^{-}_{J_1}})  \subset U_{P^-_{J_1}}  \dot v$, we have 
\[
(U^+_{J_1} \cap  \dot v U^-_{J_1}  \dot v^{-1} )  \dot v   (U^{-}_{J_2} \cap U_{P^{-}_{J_1}})   \dot w B^+ \subset \, {}^{J_1} \!B^-  \dot v   \dot w B^+.\]
Thanks to \eqref{eq:BD1}, we have 
\[
P^+_{J_1} P^+_{J_2} \subset \bigsqcup_{v \in W_{J_1}, w \in W_{J_2} \cap {}^{J_1}W}  \, {}^{J_1} \!B^-  \dot v   \dot w B^+.
\]

Then we have that $(U^+_{J_1} \cap  \dot v U^-_{J_1}  \dot v^{-1} ) \dot  v   (U^{-}_{J_2} \cap U_{P^{-}_{J_1}})   \dot w B^+=\, {}^{J_1} \!B^-  \dot v   \dot w B^+ \cap P^+_{J_1} P^+_{J_2}$
and 
\[
P^+_{J_1} P^+_{J_2}=\bigsqcup_{v \in W_{J_1}, w \in W_{J_2} \cap {}^{J_1}W}  \, ({}^{J_1} \!B^-  \dot v   \dot w B^+ \cap P^+_{J_1} P^+_{J_2}).
\]
Since $W_{J_1} W_{J_2}=W_{J_1} (W_{J_2} \cap {}^{J_1}W)$, we have $P^+_{J_1} P^+_{J_2} =\bigsqcup_{\tw \in W_{J_1} W_{J_2}}\, {}^{J_1} \!B^- \dot \tw B^+ \cap P^+_{J_1} P^+_{J_2}$ and the proposition is proved. 
\end{proof}

Combining Proposition~\ref{prop:LL} and Lemma~\ref{lem:J1B}, we have the following proposition.
\begin{prop}\label{prop:-LL}
Let $J_1$, $J_2 \subset I$ and $v \in W_{J_1}$, $w \in W_{J_2} \cap {}^{J_1}W$. We have an isomorphism 
\[
(U^+_{J_1} \cap  \dot v U^-_{J_1}  \dot v^{-1} )  \times  \big(  \dot v(U^{-}_{J_2} \cap U_{P^{-}_{J_1}} \cap  \dot w U^{-}_{J_2}  \dot w^{-1})   \dot v^{-1} \big) \rightarrow \, {}^{J_1} \mathring{X}_{vw} \cap P^+_{J_1} P^+_{J_2} / B^+. 
\]
\end{prop}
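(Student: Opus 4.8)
The plan is to realize the claimed map as the restriction, to the first two factors, of the full parametrization of the opposite $J_1$-Schubert cell ${}^{J_1}\mathring{X}_{vw}$ that one gets by feeding the refined decomposition of Lemma~\ref{lem:J1B} into Lemma~\ref{lem:JXw}, and then to pin down its image using Proposition~\ref{prop:LL}. Throughout I would set $\dot u=\dot v\dot w$, which is legitimate since $\ell(vw)=\ell(v)+\ell(w)$, and write $N=U^+_{J_1}U_{P^-_{J_1}}\cap\dot u U^-\dot u^{-1}$.

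First I would record the injectivity, which comes essentially for free. By the second isomorphism of Lemma~\ref{lem:JXw} (applied to $J_1$ and the element $vw$), the map $N\to{}^{J_1}\mathring{X}_{vw}$, $g\mapsto g\,\dot u B^+$, is an isomorphism, since the two factors appearing there multiply to $N$. Lemma~\ref{lem:J1B} refines $N$ into a triple product $N=A\odot D_1\odot D_2$, where $A=U^+_{J_1}\cap\dot v U^-_{J_1}\dot v^{-1}$, where $D_1=\dot v(U^-_{J_2}\cap U_{P^-_{J_1}}\cap\dot w U^-_{J_2}\dot w^{-1})\dot v^{-1}$ is exactly the second factor in the statement, and where $D_2=\dot v(U_{P^-_{J_2}}\cap U_{P^-_{J_1}}\cap\dot w U^-\dot w^{-1})\dot v^{-1}$. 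In particular the multiplication $A\times D_1\to N$ is injective, so the composite $\Phi\colon(g_1,g_2)\mapsto g_1 g_2\,\dot u B^+$ is an injective morphism; since the cell parametrization is an isomorphism of ind-varieties, $\Phi$ is an isomorphism onto its image.

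It then remains to identify this image with the target. Writing $g_2=\dot v c'\dot v^{-1}$ with $c'\in C':=U^-_{J_2}\cap U_{P^-_{J_1}}\cap\dot w U^-_{J_2}\dot w^{-1}$, one has $g_1 g_2\,\dot u B^+=g_1\dot v c'\dot w B^+$, so the image of $\Phi$ is $A\,\dot v\,C'\,\dot w\,B^+/B^+$. On the other hand, the final displayed identity in the proof of Proposition~\ref{prop:LL} reads ${}^{J_1}\mathring{X}_{vw}\cap P^+_{J_1}P^+_{J_2}/B^+=A\,\dot v\,C\,\dot w\,B^+/B^+$ with $C=U^-_{J_2}\cap U_{P^-_{J_1}}$. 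As $C'\subset C$, one inclusion is immediate, and the whole content is the reverse one, i.e.\ the reduction $C\,\dot w\,B^+=C'\,\dot w\,B^+$.

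The hard part is precisely this reduction, and I would handle it by a root-theoretic factorization. Since $C\subset L_{J_2}$ and $w\in W_{J_2}$, Theorem~5.2.3 of \cite{Kum}, applied inside $L_{J_2}$ and restricted along the Levi decomposition of $L_{J_2}\cap P^-_{J_1}$ (exactly the compatibility already invoked in the proof of Lemma~\ref{lem:J1B}), yields a factorization $C=C'\odot C''$ with $C''=C\cap\dot w U^+_{J_2}\dot w^{-1}$; concretely, $C'$ and $C''$ are the products of the root subgroups $U_\alpha$, $\alpha\in\Delta^{re,-}_{J_2}\setminus\Delta^{re}_{J_1}$, with $w^{-1}(\alpha)$ negative and positive respectively. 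For $c''\in C''$ one has $\dot w^{-1}c''\dot w\in U^+_{J_2}\subset B^+$, hence $C''\dot w\subset\dot w B^+$ and $C\,\dot w\,B^+=C'C''\,\dot w\,B^+=C'\,\dot w\,B^+$. Multiplying on the left by $A\dot v$ gives $A\,\dot v\,C\,\dot w\,B^+=A\,\dot v\,C'\,\dot w\,B^+$, so the image of $\Phi$ is the full intersection, completing the identification. The only genuinely delicate point is this factorization $C=C'\odot C''$ and its compatibility with $U_{P^-_{J_1}}$; once it is in place, everything else is a formal assembly of Lemma~\ref{lem:JXw}, Lemma~\ref{lem:J1B}, and Proposition~\ref{prop:LL}.
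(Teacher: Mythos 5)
Your proof is correct and follows essentially the same route as the paper's: identify the intersection ${}^{J_1}\mathring{X}_{vw}\cap P^+_{J_1}P^+_{J_2}/B^+$ via Proposition~\ref{prop:LL}, then realize the claimed map as the restriction, along the refined factorization of Lemma~\ref{lem:J1B}, of the cell parametrization from Lemma~\ref{lem:JXw}. The only difference is one of detail: the reduction $(U^{-}_{J_2}\cap U_{P^{-}_{J_1}})\dot w B^+=(U^{-}_{J_2}\cap U_{P^{-}_{J_1}}\cap \dot w U^{-}_{J_2}\dot w^{-1})\dot w B^+$, which you correctly single out and prove via the factorization $C=C'\odot C''$ inside $L_{J_2}$, is exactly the step the paper absorbs without comment into the second equality of the first display of its proof.
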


\begin{proof}
By Proposition~\ref{prop:LL}, we have
\begin{align*}
 \, {}^{J_1} \mathring{X}_{vw}\cap P^+_{J_1} P^+_{J_2} / B^+ &=  (U^+_{J_1} \cap  \dot v U^-_{J_1}  \dot v^{-1} ) \dot  v   (U^{-}_{J_2} \cap U_{P^{-}_{J_1}})   \dot w B^+ \\
 &= (U^+_{J_1} \cap  \dot v U^-_{J_1}  \dot v^{-1} ) \dot  v  (U^{-}_{J_2} \cap U_{P^{-}_{J_1}} \cap  \dot w U^{-}_{J_2}  \dot w^{-1})   \dot w B^+.
\end{align*}

By Lemma~\ref{lem:JXw}, we have the isomorphism
\[
\big(  U_{J_1}^{+}   \cap (\dot v \dot w) U^- (\dot v \dot w)^{-1}\big) \times \big(  U_{P^-_{J_1}}   \cap (\dot v \dot w) U^- (\dot v \dot w)^{-1}\big) \xrightarrow{\sim} {}^{J_1} \mathring{X}_{vw}.
\]
Thanks to Lemma~\ref{lem:J1B}, the lemma follows from the restriction of the above isomorphism to
\[
(U^+_{J_1} \cap  \dot v U^-_{J_1}  \dot v^{-1} )  \times  \big(  \dot v(U^{-}_{J_2} \cap U_{P^{-}_{J_1}} \cap  \dot w U^{-}_{J_2}  \dot w^{-1})   \dot v^{-1} \big) \xrightarrow{\sim} \, {}^{J_1} \mathring{X}_{vw} \cap P^+_{J_1} P^+_{J_2} / B^+. \qedhere
\]
\end{proof}

%%%%%%%%%%%%%%%%%%%%%%%%%%%%%%%%%%%%%%%%%%%%%%%%%%%%%
%%%%%%%%%%%%%%%%%%%%%%%%%%%%%%%%%%%%%%%%%%%%%%%%%%%%%

%%%%%%%%%%%%%%%%%%%%%%%%%%%%%%%%%%%%%%%%%%%%%%%%%%%%%
%%%%%%%%%%%%%%%%%%%%%%%%%%%%%%%%%%%%%%%%%%%%%%%%%%%%%
\section{A Birkhoff-Bruhat atlas}\label{sec:BB}

\subsection{Definitions} Let $M$ be an ind-variety over $\kk$. A {\it stratification} on $M$ is a family of locally closed, finite dimensional subvarieties $\{\mathring{M}^y\}_{y \in \CY}$ indexed by a   poset $\CY$ such that 

\begin{itemize}
	\item $M=\sqcup_{y \in \CY} \mathring{M}_y$; 
	
	\item For any $y \in \CY$, the Zariski closure $M^y$ of $\mathring{M}_y$ equals $\sqcup_{y' \le y} \mathring{M}_{y'}$. 
\end{itemize}

Assume furthermore that the minimal strata in the stratification $\CY_{\min}$ of $M$ are points. A Birkhoff-Bruhat atlas on $(M, \CY)$ is the following data: 

\begin{enumerate}
	\item an open covering for $M$ consisting of open sets $U_f$ around the minimal strata $f \in \CY$;
	
	\item a (minimal) Kac-Moody group $\tilde G $ and a subset $J$ of the set of simple roots of $\tilde G$; 
	
	\item for any minimal stratum $f \in\CY$, an embedding $c_f$ from $U_f$ into the flag variety $\tilde  \CB$ of $\tilde  G $ such that $c_f(U_f \cap \mathring{M}_y)$ is an open $J$-Richardson variety of $\tilde  \CB$ for any $y \in \tilde  \CY$. 
	 
\end{enumerate}

%%%%%%%%%%%%%%%%%%%%%%%%%%%%%%%%%%%%%%%%%%%%%%%%%%%%
%%%%%%%%%%%%%%%%%%%%%%%%%%%%%%%%%%%%%%%%%%%%%%%%%%%%

%%%%%%%%%%%%%%%%%%%%%%%%%%%%%%%%%%%%%%%%%%%%%%%%%%%%
%%%%%%%%%%%%%%%%%%%%%%%%%%%%%%%%%%%%%%%%%%%%%%%%%%%%

\subsection{Partial flag varieties} \label{sec:QK}

Let $K \subset I$ and $\CP_K=G/P^+_K$ be the partial flag variety. Then we have the decomposition $$
\CP_K=\sqcup_{w \in W^K} B^+   \dot w P^+_K/P^+_K=\sqcup_{v \in W^K} B^-    \dot v P^+_K/P^+_K.
$$

Let $Q_K=\{(v, w) \in W \times W^K \mid v \le w\}$. Define the partial order $\preceq$ on $Q_K$ as follows: 
\[
 (v', w') \preceq (v, w)  \text{ if there exists } u \in W_K \text{ such that } v \le v' u \le w' u \le w. 
\]

For any $(v, w) \in Q_K$, set 
$$\mathring{\Pi}_{v, w}=\pi_K(\mathring{R}_{v, w}) \text{ and } \Pi_{v, w}=\pi_K(R_{v, w}),$$ where $\pi_K: \CB \to \CP_K$ is the projection map. Then $\Pi_{v, w}$ is the (Zariski) closure of $\mathring{\Pi}_{v, w}$ in $\CP_K$. We call $\mathring{\Pi}_{v, w}$ an {\it open projected Richardson variety} and $\Pi_{v, w}$ a {\it closed projected Richardson variety}.

By \cite[Proposition 3.6]{KLS}, we have 
\begin{equation}\label{eq:Richardson}
\CP_K=\sqcup_{(v, w) \in Q_K} \mathring{\Pi}_{v, w} \text{ and } \Pi_{v, w}=\sqcup_{(v', w') \in Q_K; (v', w') \preceq (v, w)} \mathring{\Pi}_{v', w'}.
\end{equation}

	\vspace{.2cm}
	{\bf Let $K\subset I$. The goal of the rest of this section is devoted to construct an Birkhoff-Bruhat atlas for the stratified space $M=\CP_K$ with the stratification $\{\mathring{\Pi}_{v, w}\}_{(v, w) \in Q_K}$ considered in \S\ref{sec:QK}}.
	\vspace{.2cm}
%%%%%%%%%%%%%%%%%%%%%%%%%%%%%%%%%%%%%%%%%%%%%%%%%%%%
%%%%%%%%%%%%%%%%%%%%%%%%%%%%%%%%%%%%%%%%%%%%%%%%%%%%

%%%%%%%%%%%%%%%%%%%%%%%%%%%%%%%%%%%%%%%%%%%%%%%%%%%%
%%%%%%%%%%%%%%%%%%%%%%%%%%%%%%%%%%%%%%%%%%%%%%%%%%%%
\subsection{The Kac-Moody group $\tilde{G}$}\label{sec:tildeG}
	We construct the set of simple roots and the associated generalized Cartan matrix of the Kac-Moody $\tilde{G}$ from the original Kac-Moody group ${G}$. We list some examples of such construction in \S\ref{sec:example}.

The set $\tilde{I}$ of simple roots is the union of two copies of $I$, glued along $K$. More precisely, let $I^\flat=\{i^\flat \mid i \in I\}$ and $I^\sharp=\{i^\sharp \mid i \in I\}$ be the two copies of $I$. Then $\tilde I=I^\flat \cup I^\sharp$ with $I^\flat \cap I^\sharp=\{k^\flat=k^\sharp \mid k \in K\}$. For any $i \in I$, we set $(i^\flat)^\natural=(i^\sharp)^\natural=i$. The generalized Cartan matrix $\tilde A=(\tilde a_{\tilde i, \tilde i'})_{\tilde i, \tilde i' \in \tilde I}$ is defined as follows: 
\begin{itemize}
	\item for $\tilde i, \tilde i' \in I^\flat$, $\tilde a_{\tilde i, \tilde i'}=a_{\tilde i^\natural, (\tilde i')^\natural}$; 
	
	\item for $\tilde i, \tilde i' \in I^\sharp$, $\tilde a_{\tilde i, \tilde i'}=a_{\tilde i^\natural, (\tilde i')^\natural}$; 
	
	\item for $\tilde i \in \tilde I-I^\flat$ and $\tilde i' \in \tilde I-I^\sharp$, $\tilde a_{\tilde i, \tilde i'}=\tilde a_{\tilde i', \tilde i}=0$. 
\end{itemize}

Since $I^\sharp \cap I^\flat=\{k^b=k^\sharp \mid k\in K\}$, we have $\tilde a_{k^\flat, (k')^\flat}=a_{k, k'}=\tilde a_{k^\sharp, (k')^\sharp}$ and the generalized Cartan matrix $\tilde A$ is well-defined. The generalized Cartan matrix $\tilde A$ is symmetrizable. 

Let $\tilde G^{\min}$ be the minimal Kac-Moody group of simply connected type associated to $(\tilde I, \tilde A)$ and $\tilde W$ be its Weyl group. Let $\tilde W_{I^\flat}$ and $\tilde W_{I^\sharp}$ be the parabolic subgroup of $\tilde W$ generated by simple reflections in $I^\flat$ and $I^\sharp$ respectively. We have natural identifications $W \to \tilde W_{I^\flat}, w \mapsto w^\flat$ and $W \to \tilde W_{I^\sharp}, w \mapsto w^\sharp$. For $w \in W_K$, $w^\flat=w^\sharp$. For any $w \in W$, we set $(w^\flat)^\natural=w$ and $(w^\sharp)^\natural=w$. 

Similarly, we have natural embedding $G^{\min} \to \tilde L_{I^\flat}, g \mapsto g^\flat$ and $G^{\min} \to \tilde L_{I^\sharp}, g \mapsto g^\sharp$. For $g \in L_K$, $g^\flat=g^\sharp$. 

We denote by $\tilde X$ the flag variety of $\tilde G^{\min}$, and denote by ${}^{I^\flat}  \mathring{\tilde X}^-$, ${}^{I^\flat} \mathring{\tilde X}_-$, ${}^{I^\flat} \mathring{\tilde R}_{-, -}$ the $I^\flat$-Schubert cells, the opposite $I^\flat$-Schubert cells and the open $I^\flat$-Richardson variety respectively.

%%%%%%%%%%%%%%%%%%%%%%%%%%%%%%%%%%%%%%%%%%%%%%%%%%%%
%%%%%%%%%%%%%%%%%%%%%%%%%%%%%%%%%%%%%%%%%%%%%%%%%%%%

%%%%%%%%%%%%%%%%%%%%%%%%%%%%%%%%%%%%%%%%%%%%%%%%%%%%
%%%%%%%%%%%%%%%%%%%%%%%%%%%%%%%%%%%%%%%%%%%%%%%%%%%%
\subsection{The Kac-Moody group in a Birkhoff-Bruhat atlas} 
Let $r \in W$. The following multiplication maps are  isomorphisms of ind-varieties:
\begin{align}
(  \dot r U  \dot r^{-1} \cap U^{+} )\times  ( \dot r U  \dot r^{-1} \cap U^{-})   &\longrightarrow  \dot r U  \dot r^{-1}, \label{eq:sigma-} \quad
(g_1, g_2)  \mapsto g_1 g_2;  \\
   ( \dot r U  \dot r^{-1} \cap U^{-})    \times ( \dot r U  \dot r^{-1} \cap U^{+}) &\longrightarrow  \dot r U \dot  r^{-1}, \label{eq:sigma+} \quad
   (h_1, h_2)  \mapsto h_1 h_2.
\end{align}

We define morphisms of ind-varieties
\[
\sigma_{r,-}:  \dot r U  \dot r^{-1}  \rightarrow   \dot r U \dot  r^{-1} \cap U^{-}, \qquad g_1 g_2  \mapsto g_2,
\]
and 
\[
 \sigma_{r,+}:  \dot r U \dot  r^{-1}  \rightarrow  \dot  r U  \dot r^{-1} \cap U^{+},  \qquad h_1 h_2  \mapsto h_2.
\]

\begin{lemma}\cite[Proposition~8.2]{GKL} \label{lem:u}
Let $r \in W$. The map 
$$
\s_r=(\s_{r, +}, \s_{r, -}):  \dot r U^-  \dot r \i \to (U^+ \cap \dot  r U^- \dot  r \i) \times (U^- \cap  \dot r U^- \dot  r\i)
$$
 is an isomorphism of ind-varieties. 
\end{lemma}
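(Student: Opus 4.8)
The plan is to prove the lemma by exhibiting an explicit two-sided inverse to $\sigma_r$ built out of the two factorizations \eqref{eq:sigma-} and \eqref{eq:sigma+}, and then checking the two inverse relations by a short manipulation inside the group $V := \dot r U^- \dot r^{-1}$. Throughout I write $V^+ = U^+ \cap V$ and $V^- = U^- \cap V$. By \eqref{eq:sigma-} every $g \in V$ has a unique factorization $g = p(g)\,n(g)$ with $p(g) \in V^+$, $n(g) \in V^-$, and by \eqref{eq:sigma+} a unique factorization $g = \bar n(g)\,\bar p(g)$ with $\bar n(g) \in V^-$, $\bar p(g) \in V^+$. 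The four resulting maps $g \mapsto p(g), n(g), \bar n(g), \bar p(g)$ are morphisms of ind-varieties, being the components of the inverses of the isomorphisms \eqref{eq:sigma-} and \eqref{eq:sigma+}, and by definition $\sigma_{r,-}(g) = n(g)$ and $\sigma_{r,+}(g) = \bar p(g)$, so that $\sigma_r(g) = (\bar p(g),\, n(g))$.

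Next I would define a candidate inverse $\tau : V^+ \times V^- \to V$ by the rule: given $(a,b) \in V^+ \times V^-$, form $b a^{-1} \in V$, factor it as $b a^{-1} = p(b a^{-1})\, n(b a^{-1})$ via \eqref{eq:sigma-}, and set $\tau(a,b) = p(b a^{-1})^{-1}\, b$. This is manifestly a morphism of ind-varieties, being a composite of multiplication, inversion, and the component morphism $g \mapsto p(g)$. Since $p(b a^{-1})^{-1} \in V^+$ and $b \in V^-$, the displayed expression for $\tau(a,b)$ is already its \eqref{eq:sigma-}-factorization, so $\sigma_{r,-}(\tau(a,b)) = b$ with no further work.

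The heart of the matter is then the single commutation computation that yields both $\sigma_{r,+}(\tau(a,b)) = a$ and $\tau \circ \sigma_r = \id$. Writing $b a^{-1} = p_0 n_0$ with $p_0 = p(b a^{-1})$, $n_0 = n(b a^{-1})$, one gets $b = p_0 n_0 a$, hence $\tau(a,b) = p_0^{-1} b = n_0 a$; as $n_0 \in V^-$ and $a \in V^+$ this is the \eqref{eq:sigma+}-factorization of $\tau(a,b)$, so $\sigma_{r,+}(\tau(a,b)) = a$ and $\sigma_r \circ \tau = \id$. For the reverse composite, take $g \in V$ and put $(a,b) = \sigma_r(g) = (\bar p(g),\, n(g))$. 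Comparing $g = p(g)\,n(g) = \bar n(g)\,\bar p(g)$ gives $b a^{-1} = n(g)\,\bar p(g)^{-1} = p(g)^{-1}\,\bar n(g)$, and since $p(g)^{-1} \in V^+$, $\bar n(g) \in V^-$, this is precisely the \eqref{eq:sigma-}-factorization of $b a^{-1}$; thus $p(b a^{-1}) = p(g)^{-1}$ and $\tau(a,b) = p(g)\,b = p(g)\,n(g) = g$. Hence $\tau \circ \sigma_r = \id$ as well, and $\sigma_r$ is an isomorphism of ind-varieties.

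I expect the only genuine subtlety to be bookkeeping: recognizing that the rearranged products $p_0^{-1} b$ and $b a^{-1}$ land in $V^- V^+$ and $V^+ V^-$ respectively, so that uniqueness of the factorizations \eqref{eq:sigma-} and \eqref{eq:sigma+} identifies them with the canonical ones and does all the remaining work; no root-combinatorial analysis is required. It is worth stressing why an explicit inverse is the right approach here rather than a bijectivity-plus-smoothness argument: in the Kac-Moody setting $V$, $V^+$, $V^-$ are in general infinite-dimensional ind-varieties, so one must produce a morphism of ind-varieties inverting $\sigma_r$ directly. The one external input is that the component maps above are morphisms, which is immediate from the isomorphisms \eqref{eq:sigma-} and \eqref{eq:sigma+} (ultimately \cite[Theorem~5.2.3]{Kum}).
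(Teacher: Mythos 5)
Your proof is correct. The paper itself gives no argument for this lemma --- it simply cites \cite[Proposition~8.2]{GKL} --- so there is nothing internal to compare against; your construction of the explicit two-sided inverse $\tau(a,b)=p(ba^{-1})^{-1}\,b$, with both identities $\sigma_r\circ\tau=\id$ and $\tau\circ\sigma_r=\id$ falling out of the uniqueness of the factorizations \eqref{eq:sigma-} and \eqref{eq:sigma+}, is the standard Gauss-decomposition-style argument and is essentially what the cited reference does. You also correctly identify the one point requiring care in the Kac--Moody setting: the inverse must be exhibited as a morphism of ind-varieties (a composite of group operations and the components of the inverses of \eqref{eq:sigma-} and \eqref{eq:sigma+}), rather than deduced from any finite-dimensional bijectivity argument.
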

Note that the isomorphism is compatible with Levi decompositions. The restriction of $\sigma_r$ gives the isomorphism
\[
  \dot r U_{P^-_K}  \dot r \i \to (U^+ \cap  \dot r U_{P^-_K}  \dot  r \i) \times (U^- \cap  \dot r U_{P^-_K}   \dot r\i).
\]
Let $r \in W^K$ and $U_r =   \dot r B^- P^+_K/P^+_K \subset \CP_{K}$.  We have an  isomorphism 
\[
 \dot r U_{P^-_K}  \dot  r \longrightarrow U_r, \qquad g \mapsto g  \dot r P^+_K/P^+_K.
\]

Finally, for $g \in \dot r U^- \dot r^{-1}$, we write $\sigma_{r, \pm}^{\flat / \sharp}(g)$ for $ \sigma_{r, \pm}(g)^{\flat/\sharp}$.

\begin{theorem}\label{thm:BBatlas1}
For $r \in W^K$, we define the map 
\begin{align*}
\tilde{c}_r: U_r &\to \tilde X,\\
g   \dot r P^+_K/P^+_K &\mapsto \s_{r, +}^{\flat} (g) \cdot \dot{ r }^{\flat} ( \dot{ r }^{-1})^\sharp \cdot \s^\sharp _{r, -}(g)^{-1} \cdot  \tilde B^+/\tilde B^+ \text{ for } g \in \dot r U_{P^-_K}  \dot r \i.
\end{align*}
Then $(\tilde{c}_r)_{r \in W^K}$ gives a Birkhoff-Bruhat atlas for $\CP_K$. 
\end{theorem}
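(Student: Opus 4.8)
The plan is to verify, for the explicit data $\tilde G$, $J=I^\flat$ and the charts $(\tilde c_r)_{r\in W^K}$, the three conditions in the definition of a Birkhoff-Bruhat atlas. I would first dispose of the combinatorial and topological bookkeeping. The minimal strata of $(Q_K,\preceq)$ are exactly the pairs $(r,r)$ with $r\in W^K$ (these are the only $(v,w)$ with $\dim\mathring{\Pi}_{v,w}=\ell(w)-\ell(v)=0$), and $\mathring{\Pi}_{r,r}$ is the $T$-fixed point $\dot r P^+_K/P^+_K$. The open sets $U_r=\dot r B^-P^+_K/P^+_K$ are neighbourhoods of these points and they cover $\CP_K$: the complement of $\bigcup_r U_r$ would be a nonempty closed $T$-stable subset of $\CP_K$ avoiding every $T$-fixed point, which is impossible. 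The isomorphism $\dot r U_{P^-_K}\dot r^{-1}\xrightarrow{\sim}U_r$, $g\mapsto g\dot r P^+_K/P^+_K$, together with Lemma~\ref{lem:u} restricted (compatibly with the Levi decomposition) to $\dot r U_{P^-_K}\dot r^{-1}$, shows that $\tilde c_r$ is a morphism of ind-varieties.

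The next step is to identify the image. Since $\flat$ and $\sharp$ are homomorphisms, the defining formula collapses to
\[
\tilde c_r(g\dot r P^+_K/P^+_K)=(\s_{r,+}(g)\,\dot r)^\flat\bigl((\s_{r,-}(g)\,\dot r)^\sharp\bigr)^{-1}\tilde B^+/\tilde B^+ .
\]
As $\s_{r,+}(g)\in U^+$ and $\s_{r,-}(g)\in U^-$, the right-hand element lies in $\tilde L_{I^\flat}\tilde L_{I^\sharp}$, so by Proposition~\ref{prop:LL1} the image sits inside $\tilde P^+_{I^\flat}\tilde P^+_{I^\sharp}/\tilde B^+$. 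I would then invoke Proposition~\ref{prop:-LL} with $J_1=I^\flat$, $J_2=I^\sharp$, $v=r^\flat$, $w=(r^{-1})^\sharp$ (one checks $r\in W^K$ forces $r^{-1}\in{}^KW$, hence $(r^{-1})^\sharp\in\tilde W_{I^\sharp}\cap{}^{I^\flat}\tilde W$). By Lemma~\ref{lem:u} the pair $(\s_{r,+}(g),\s_{r,-}(g))$ ranges freely over $(U^+\cap\dot r U_{P^-_K}\dot r^{-1})\times(U^-\cap\dot r U_{P^-_K}\dot r^{-1})$, and for $r\in W^K$ one has $U^+\cap\dot r U_{P^-_K}\dot r^{-1}=U^+\cap\dot r U^-\dot r^{-1}$; thus, under $\flat$ and $\sharp$, these two factors match bijectively the two factors of Proposition~\ref{prop:-LL}. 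Hence $\tilde c_r$ is an embedding, restricting to an isomorphism of $U_r$ onto the locally closed piece ${}^{I^\flat}\mathring{\tilde X}_{z}\cap\tilde P^+_{I^\flat}\tilde P^+_{I^\sharp}/\tilde B^+$ of the opposite $I^\flat$-Schubert cell, where $z=r^\flat(r^{-1})^\sharp$; consistently, $\dot r P^+_K/P^+_K$ is sent to the unique $T$-fixed point $\dot z\tilde B^+$ of this cell.

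The heart of the proof is the matching of strata. By \eqref{eq:BD1} and Proposition~\ref{prop:intersection} the $I^\flat$-Schubert cells cut ${}^{I^\flat}\mathring{\tilde X}_z$ into the open $I^\flat$-Richardson varieties ${}^{I^\flat}\mathring{\tilde R}_{z,\tilde w}$ with $z\leI\tilde w$; and for $\tilde w\in\tilde W_{I^\flat}\tilde W_{I^\sharp}$ the \emph{full} cell ${}^{I^\flat}\mathring{\tilde X}^{\tilde w}$ is already contained in $\tilde P^+_{I^\flat}\tilde P^+_{I^\sharp}/\tilde B^+$ by Proposition~\ref{prop:LL1}, so that
\[
\tilde c_r(U_r)\cap{}^{I^\flat}\mathring{\tilde X}^{\tilde w}={}^{I^\flat}\mathring{\tilde X}_z\cap{}^{I^\flat}\mathring{\tilde X}^{\tilde w}={}^{I^\flat}\mathring{\tilde R}_{z,\tilde w}
\]
is a genuine open $I^\flat$-Richardson variety. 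It then remains to produce a bijection $(v,w)\mapsto\tilde w(v,w)$ from $\{(v,w)\in Q_K:\mathring{\Pi}_{v,w}\cap U_r\neq\emptyset\}$ onto $\{\tilde w\in\tilde W_{I^\flat}\tilde W_{I^\sharp}:z\leI\tilde w\}$ with $\tilde c_r(U_r\cap\mathring{\Pi}_{v,w})={}^{I^\flat}\mathring{\tilde R}_{z,\tilde w(v,w)}$. Concretely one lifts $U_r\cap\mathring{\Pi}_{v,w}$ to the Richardson variety $\mathring R_{v,w}\subset\CB$ (using that $\pi_K$ restricts to an isomorphism $\mathring R_{v,w}\xrightarrow{\sim}\mathring{\Pi}_{v,w}$, cf.~\eqref{eq:Richardson} and \cite{KLS}), reads off the $I^\flat$-Schubert position of $(\s_{r,+}(g)\dot r)^\flat((\s_{r,-}(g)\dot r)^\sharp)^{-1}$ from Corollary~\ref{cor:JXw}, and checks that $\tilde w$ is determined by the Bruhat datum $w$ in the $\flat$-copy and by $v$ in the $\sharp$-copy.

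The main obstacle is precisely this last identification: exhibiting the explicit combinatorial map $(v,w)\mapsto\tilde w(v,w)$, proving it is an order-preserving bijection onto $\{\tilde w:z\leI\tilde w\}$, and upgrading the equality $\tilde c_r(U_r\cap\mathring{\Pi}_{v,w})={}^{I^\flat}\mathring{\tilde R}_{z,\tilde w}$ from points to an isomorphism of varieties. The subtlety is that the projected-Richardson coordinates on $U_r\cap\mathring{\Pi}_{v,w}$ inherited from $\mathring R_{v,w}$ and the $I^\flat$-Richardson coordinates of Corollary~\ref{cor:JXw} live on the two different copies $I^\flat$, $I^\sharp$ of $I$; reconciling them forces one to track how the two orderings of the $U^\pm$-factorization of $g$ produced by $\s_{r,+}$ and $\s_{r,-}$ interact with the gluing of $\tilde G$ along $K$, and it is here that the hypotheses $w\in W^K$ and the identification $W_K=\tilde W_{I^\flat}\cap\tilde W_{I^\sharp}$ are used decisively.
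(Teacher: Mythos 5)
Your first half is sound and is essentially the paper's own argument: collapsing the chart to $(\s_{r,+}(g)\dot r)^\flat\bigl((\s_{r,-}(g)\dot r)^\sharp\bigr)^{-1}\tilde B^+/\tilde B^+$, using $U^+\cap \dot r U_{P^-_K}\dot r\i=U^+\cap\dot r U^-\dot r\i$ (valid since $r\in W^K$) together with Lemma~\ref{lem:u}, and invoking Propositions~\ref{prop:LL1} and~\ref{prop:-LL} to see that $\tilde c_r$ is an embedding onto ${}^{I^\flat}\mathring{\tilde X}_{z}\cap\tilde L_{I^\flat}\tilde L_{I^\sharp}\tilde B^+/\tilde B^+$, where $z=r^\flat(r\i)^\sharp$. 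Your observation that this image is partitioned into the genuine $I^\flat$-Richardson varieties ${}^{I^\flat}\mathring{\tilde R}_{z,\tilde w}$, $\tilde w\in\tW_{I^\flat}\tW_{I^\sharp}$ with $z\leI\tilde w$, is also correct and matches what the paper extracts from Proposition~\ref{prop:LL}.

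However, the step you defer as ``the main obstacle'' is the actual content of the theorem, and you give no argument for it, only a description of what a proof should accomplish; moreover the route you sketch (reading off positions from Corollary~\ref{cor:JXw} and reconciling two coordinate systems) is not the needed one. The missing idea is a short computation. Given $g\dot r P^+_K/P^+_K\in U_r\cap\mathring\Pi_{v,w}$, choose $l\in L_K$ with $g\dot r l\in B^+\dot w B^+\cap B^-\dot v B^+$ (surjectivity of $\pi_K\colon\mathring R_{v,w}\to\mathring\Pi_{v,w}$ suffices; no isomorphism is needed). The factorizations \eqref{eq:sigma-} and \eqref{eq:sigma+} give $\s_{r,+}(g)\dot r l\in B^- g\dot r l\subset B^-\dot v B^+$ and $\s_{r,-}(g)\dot r l\in B^+ g\dot r l\subset B^+\dot w B^+$. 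The key point, which your proposal never isolates, is that $l\in L_K$ forces $l^\flat=l^\sharp$, so inserting $l$ leaves the chart value unchanged: $\s^\flat_{r,+}(g)\,\dot r^\flat(\dot r\i)^\sharp\,\s^\sharp_{r,-}(g)^{-1}=(\s_{r,+}(g)\dot r l)^\flat\bigl((\s_{r,-}(g)\dot r l)\i\bigr)^\sharp$. This places the image point in $(U^-)^\flat\dot v^\flat(B^+)^\flat(B^+)^\sharp(\dot w\i)^\sharp(B^+)^\sharp$, which by the conjugation identities for $\tilde U_{P^+_{I^\flat\cap I^\sharp}}$ and $\tilde U^+_{I^\flat\cap I^\sharp}$ lies in ${}^{I^\flat}B^+\dot v^\flat(\dot w\i)^\sharp(B^+)^\sharp={}^{I^\flat}\mathring{\tilde X}^{v^\flat(w\i)^\sharp}$, hence in ${}^{I^\flat}\mathring{\tilde R}_{z,\,v^\flat(w\i)^\sharp}$. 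Once this one inclusion per stratum is in hand, nothing else you list needs to be proved: the sets $\tilde c_r(U_r\cap\mathring\Pi_{v,w})$ are pairwise disjoint, the sets ${}^{I^\flat}\mathring{\tilde R}_{z,\tilde w}$ are pairwise disjoint, both families have union $\tilde c_r(U_r)$, and $(v,w)\mapsto v^\flat(w\i)^\sharp$ is injective, so every inclusion is automatically an equality, and the scheme-theoretic statement follows because $\tilde c_r$ was already shown to be an embedding. (Order-preservation of $(v,w)\mapsto v^\flat(w\i)^\sharp$ is a separate result, Proposition~\ref{prop:tW}, proved later in the paper and not required for this theorem.)
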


\begin{proof}

Since $r \in W^K$, we have 
$
U^+ \cap  \dot r U_{P^-_K}  \dot r = U^+ \cap  \dot r U^- \dot r
$.

Therefore   $\s^{\flat}_{r, +}(U_{P^-_K} ) = (U^+)^\flat \cap  \dot r^\flat (U^-)^\flat ( \dot r \i)^\flat $. On the other hand, we have 
\begin{align*}\dot{ r }^{\flat} ( \dot{ r }^{-1})^\sharp \cdot \sigma^{\sharp}_{r,-}(U_{P^-_K})^{-1} \cdot  \dot{ r }^\sharp  (\dot{ r }^{-1})^{\flat} 
 =  &  \dot r^\flat \Big( ( \dot r^{-1})^{\sharp} (U^-)^{\sharp} \dot  r^{\sharp} \cap (U_{P^-_K} )^{\sharp}  \Big)( \dot r \i)^\flat \\
= &    \dot r^\flat \Big( ( \dot r^{-1})^{\sharp} (U^-)^{\sharp}  \dot r^{\sharp} \cap \tilde{U}^-_{I^\sharp} \cap  \tilde{U}^-_{P^-_{I^\flat}}  \Big)( \dot r \i)^\flat.
\end{align*}
By Proposition~\ref{prop:-LL}, $\tilde{c}_r$ is an  embedding with image 
$
{}^{I^\flat} \mathring{\tilde X}_{\tilde \nu(r, r)} \cap \tilde L_{I^\flat} \tilde L_{I^\sharp} \tilde B^+/\tilde B^+
$.

We then check the stratifications. 
Let $(v, w) \in Q_K$. Suppose that $g \in  \dot r U_{P^-_K}  \dot r \i$ with $g  \dot r P^+_K/P^+_K \in \mathring{\Pi}_{v, w}$. Then there exists $l \in L_J$ such that $g \dot r l \in B^+ \dot w B^+ \cap B^- \dot v B^+$. By \eqref{eq:sigma-} and \eqref{eq:sigma+}, we have 
\[
\s_{r, +}(g) \dot r l \in B^- g \dot r l \subset B^- \dot v B^+ \quad \text{ and } \quad \s_{r, -}(g) \dot r l \in B^+ g \dot r l \subset B^+ \dot w B^+.
\]
Therefore 
\begin{align*} 
 \s^{\flat}_{r, +}(g) \cdot  \dot{ r }^{\flat} ( \dot{ r }^{-1})^\sharp  \cdot \s^\sharp_{r, -}(g)^{-1} 
 = \,\, &(\s_{r, +}(g)  \dot r)^{\flat}  \cdot \bigl((\s_{r, -}(g)  \dot r) \i \bigr)^{\sharp} \\
= \,\,  &(\s_{r, +}(g)  \dot r l)^{\flat} \cdot  \bigl((\s_{r, -}(g)  \dot r l) \i \bigr)^{\sharp} \\ 
 \in  \,\,  & (U^-)^\flat \cdot \dot v^\flat  \cdot (B^{+})^\flat  (B^+)^\sharp  \cdot (\dot w \i)^\sharp  \cdot (B^+)^\sharp \\ 
\subset  \,\,  & (U^-)^\flat \dot v^\flat  \cdot \tilde{U}_{P^+_{I^\flat \cap I^\sharp}}  \tilde{U}^+_{I^\flat \cap I^\sharp} \cdot (\dot w \i)^\sharp (B^+)^\sharp \\ 
 \stackrel{(\heartsuit)}{\subset} \,\,  &{}^{I^\flat} B^+ \dot v^\flat (\dot w \i)^\sharp (B^+)^\sharp={}^{I^\flat} \mathring{\tilde X}^{\tilde \nu(v, w)},
\end{align*}
where $(\heartsuit)$ follows from 
\[
\dot{v}^\flat \tilde{U}_{P^+_{I^\flat\cap I^\sharp}} = \tilde{U}_{P^+_{I^\flat\cap I^\sharp}} \dot{v}^\flat \quad 
\text{ and } \quad 
\tilde{U}^+_{I^\flat \cap I^\sharp} (\dot w \i)^\sharp=(\dot w \i)^\sharp \tilde{U}^+_{I^\flat \cap I^\sharp}. 
\]

By Proposition \ref{prop:LL}, \begin{align*} 
\tilde{c}_r(U_r) &=\cup_{(v, w) \in Q_J} \tilde{c}_r(U_r \cap \mathring{\Pi}_{v, w}) \subset \cup_{(v, w) \in Q_J} {}^{I^\flat}\mathring{\tilde R}_{r^\flat (r \i)^\sharp, v^\flat (w \i)^\sharp} \\ & \subset {}^{I^\flat} \mathring{\tilde X}_{\tilde \nu(r, r)} \cap \tilde L_{I^\flat} \tilde L_{I^\sharp} \tilde B^+/\tilde B^+.
\end{align*}
 
Since $\tilde{c}_r(U_r)={}^{I^\flat} \mathring{\tilde X}_{\tilde \nu(r, r)} \cap \tilde L_{I^\flat} \tilde L_{I^\sharp} \tilde B^+/\tilde B^+$, all the inclusion above are actually equalities. In particular, for any $(v, w) \in Q_J$, $\tilde{c}_r(U_r \cap \mathring{\Pi}_{v, w})={}^{I^\flat}\mathring{\tilde R}_{r^\flat (r \i)^\sharp, v^\flat (w \i)^\sharp}$. The theorem is proved. 
\end{proof}

%A weaker version of Proposition~\ref{prop:tW} can be derived geometrically from Theorem~\ref{thm:BBatlas1}. 

%%%%%%%%%%%%%%%%%%%%%%%%%%%%%%%%%%%%%%%%%%%%%%%%%%%%%
%%%%%%%%%%%%%%%%%%%%%%%%%%%%%%%%%%%%%%%%%%%%%%%%%%%%%

%%%%%%%%%%%%%%%%%%%%%%%%%%%%%%%%%%%%%%%%%%%%%%%%%%%%%
%%%%%%%%%%%%%%%%%%%%%%%%%%%%%%%%%%%%%%%%%%%%%%%%%%%%%
\section{Combinatorial atlas}\label{sec:EL}
 
In this section, we assume $W$ is an arbitrary Coxeter group and we discuss a combinatorial analog of the geometric ``atlas model'' in Section \ref{sec:BB}.

\subsection{Posets} Let $Q$ be a poset with partial order $\le$. For any $x, y \in Q$, let $[x, y]=\{z \in Q \mid x \le z \le y\}$ be the interval from $x$ to $y$. The covering relation is denote by $\gtrdot$. In other words, $y \gtrdot x$ if $[x, y]=\{x, y\}$. For any $x, y \in Q$ with $x \le y$, a maximal chain from $x$ to $y$ is a finite sequence of elements $y=w_0 \gtrdot w_1 \gtrdot \cdots \gtrdot w_n=x$ for some $n \in \BN$ and $w_0, w_1, \ldots, w_n \in Q$. The number $n$ is called the length of the chain. Note that the maximal chain may not exists in general. 

We say that $Q$ is {\it pure} if for any $x, y \in Q$ with $x \le y$, the maximal chains from $x$ to $y$ always exist and have the same length. Such length is also called the length of the interval $[x, y]$.  A pure poset $Q$ is called {\it thin} if every interval of length $2$ has exactly $4$ elements, i.e. has exactly two elements between $x$ and $y$. 

A subset $C \subset Q$ is called {\it convex} if for any $x, y \in C$, we have $[x,y] \subset C$.

\subsection{EL-Shellability} Now we recall the notion of EL-shellability introduced by Bjorner in \cite{Bj}. 

Suppose that the poset $Q$ is pure. An edge labeling of $Q$ is a map $\l$ from the set of all covering relations in $Q$ to a poset $\L$. The labeling $\l$ sends any maximal chain of an interval of $Q$ to a tuple of $\L$. A maximal chain is called {\it increasing} if the associated tuple of $\L$ is increasing. The edge labeling $\l$ also allows one to order the maximal chains of any interval of $Q$ by ordering the corresponding tuples lexicographically. 

An edge labeling of $Q$ is called {\it EL-shellable} if for  every interval, there exists a unique increasing maximal chain, and all the other maximal chains of this interval are less than this maximal chain (with respect to the lexicographical order). 

For a poset $Q$, we define the augmented poset 
$\hat{Q} = Q \sqcup \{\hat{0}\}$, where $\hat{0}$ is the  minimal element of $\hat{Q}$. 

Thinness and EL-shellability are important combinatorial properties of posets. For example, Bjorner proved in \cite{Bj2} that if a finite poset $\hat{Q}$ is thin and EL-shellable, then it is the face poset of some regular CW complex homeomorphic to a sphere.

The main result of this section is the following. 

\begin{theorem}
The poset $\hat{Q}_K$ is thin and EL-shellable. 
\end{theorem}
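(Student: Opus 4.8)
The plan is to deduce the thinness and EL-shellability of $\hat{Q}_K$ from the corresponding (known) properties of the twisted Bruhat order $\leI$ on the Weyl group $\tilde{W}$ of the atlas group $\tilde{G}$, by transporting the combinatorial structure through the ``atlas model'' map. The key point is that the geometric atlas of Theorem~\ref{thm:BBatlas1} should have a purely combinatorial shadow: a map sending $(v,w) \in Q_K$ to an element $\tilde\nu(v,w) = v^\flat (w^{-1})^\sharp \in \tilde{W}$, and I expect this map to identify $(Q_K, \preceq)$ with a convex subset of $(\tilde{W}, \leI)$, where $\leI$ is the twisted Bruhat order $\le^{I^\flat}$ attached to $J = I^\flat$. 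Since convexity is preserved and both thinness and EL-shellability are inherited by convex subsets (more precisely, by intervals, and a convex set is a union of intervals with compatible labelings), the strategy reduces the theorem to establishing these properties for $(\tilde{W}, \leI)$ itself.

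First I would make precise the order-embedding $\iota: Q_K \hookrightarrow \tilde{W}$, $(v,w) \mapsto \tilde\nu(v,w)$, and prove that it is an isomorphism of posets onto its image when $\tilde{W}$ carries the twisted order $\leI$. This should follow by comparing the definition of $\preceq$ on $Q_K$ (involving $u \in W_K$ with $v \le v'u \le w'u \le w$) with the closure relations for $J$-Schubert varieties recorded in \eqref{eq:closure} and Proposition~\ref{prop:intersection}; geometrically, $\tilde{c}_r$ sends $U_r \cap \mathring{\Pi}_{v,w}$ to the $I^\flat$-Richardson variety ${}^{I^\flat}\mathring{\tilde R}_{r^\flat(r^{-1})^\sharp,\, v^\flat(w^{-1})^\sharp}$, so the closure order on projected Richardson varieties matches the $\leI$-order on the upper indices. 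Second, I would verify that the image $\iota(Q_K)$ is convex in $(\tilde{W}, \leI)$: this is where the specific form $v^\flat (w^{-1})^\sharp$ with $v \le w$ and $w \in W^K$ must be checked to be closed under taking $\leI$-intervals.

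The third and main step is the combinatorial input: that the twisted Bruhat order $\leI = \le^{I^\flat}$ on $\tilde{W}$ is thin and EL-shellable on any interval. This is the analogue of the classical fact for the ordinary Bruhat order, and I expect it to follow either by adapting Bj\"orner's reflection-ordering argument or by using the description of $\leJ$ via the bijection $W \cong W_J \times {}^J W$ together with the reduction to ordinary Bruhat order; Remark after Proposition~\ref{prop:intersection} (the identity ${}^J\ell(xy) = \ell(y) - \ell(x)$, and in the finite-$W_J$ case $w' \leJ w \iff w_J w' \le w_J w$) suggests that $\leI$ is an order-isomorphic copy of a piece of ordinary Bruhat order, from which thinness and EL-shellability transfer directly. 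Finally, for $\hat{Q}_K$ I would adjoin the element $\hat 0$ below all minimal elements and extend the edge labeling so that the new bottom edges receive labels smaller than all existing ones, checking that the unique-increasing-chain condition persists across $\hat 0$.

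\textbf{The main obstacle} I anticipate is the third step: proving thinness and EL-shellability for the twisted order $\leI$ intrinsically. Thinness amounts to showing every length-$2$ interval in $(\tilde{W}, \leI)$ has exactly four elements, which for ordinary Bruhat order is a standard but nontrivial fact, and the twisting by $I^\flat$ could in principle create defective intervals; one must confirm that the relations $s_\beta w \, {}^J\!\!< w$ generating $\leJ$ behave like reflections in a genuine subword/reflection order. The cleanest route is probably to exhibit $\leI$ as honest Bruhat order after a change of variables (as the finite-$W_J$ remark does globally), reducing everything to Bj\"orner's theorem; the risk is that in the infinite-type case $W_J$ has no longest element, so one cannot literally apply that trick and must instead work directly with the reflection-order/lifting-property machinery for $\leJ$ developed by Billig--Dyer \cite{BD}.
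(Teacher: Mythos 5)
Your overall architecture is the same as the paper's: your steps 1--2 are exactly Proposition~\ref{prop:tW}, which identifies $(Q_K,\preceq)$ via $\tilde\nu(v,w)=v^\flat(w^{-1})^\sharp$ with a convex subset of $(\tilde W,\leI)$ (proved in the paper both geometrically, from Theorem~\ref{thm:BBatlas1}, and purely combinatorially for arbitrary Coxeter groups). Your step 3, which you flag as the main obstacle, is in fact not one: thinness of $(\tilde W,\leI)$ and the statement that every reflection order on the set of reflections $T$ gives an EL-labelling of $(\tilde W,\leI)$ are theorems of Dyer on twisted Bruhat orders, cited in the paper as \cite[Proposition 2.5]{Dyer} and \cite[Proposition 3.9]{Dyer}, so this step closes by citation. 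Your worry is well calibrated, though: the reduction to ordinary Bruhat order via multiplication by $w_J$ indeed fails in infinite type, and the correct route is precisely the reflection-order machinery you name as the alternative.

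The genuine gap is your step 4, the treatment of $\hat 0$. Labelling the new bottom edges by labels smaller than all existing ones does not yield an EL-labelling: an element $(v,w)\in Q_K$ generally lies above many minimal elements $(r',r')$ of $Q_K$ (for instance, when $K=\emptyset$ it lies above $(r',r')$ for every $r'$ in the Bruhat interval $[v,w]$), and with your labelling the bottom edge imposes no constraint, so each such $(r',r')$ contributes an increasing maximal chain of $[\hat 0,(v,w)]$ --- its bottom edge followed by the unique increasing chain of $[(r',r'),(v,w)]$ --- and uniqueness fails. The paper's construction is not a generic extension: one must first choose the reflection order on $T$ so that $T\cap\tilde W_{I^\flat}$ is a \emph{final} section (possible by \cite[Proposition 2.3]{Dyer1}), and then insert the new label $\bot$ strictly \emph{between} $T\setminus\tilde W_{I^\flat}$ and $T\cap\tilde W_{I^\flat}$. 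This placement forces any increasing maximal chain of $[\hat 0,(v,w)]$ to avoid all edges labelled by $T\cap\tilde W_{I^\flat}$, and an argument following Williams \cite{LW} shows that such a chain must pass through the single minimal element $(r,r)$ with $r$ the minimal-length representative of $vW_K$, which restores both existence and uniqueness. Relatedly, thinness of the length-two intervals $[\hat 0,y]$ is a separate verification (also done following \cite{LW}) that your proposal does not address; convexity inside $(\tilde W,\leI)$ only handles intervals avoiding $\hat 0$.
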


The strategy of the proof is as follows: we first establish a combinatorial atlas model for the poset ${Q}_K$; then we prove $\hat{Q}_K$ is thin in \S\ref{sec:thin}; we finally prove $\hat{Q}_K$ is EL-shellable in \S\ref{sec:EL}.

The case where $W$ is a finite Weyl group was first established by Williams in \cite[Theorem 1 \& Theorem 2]{LW}. Another proof for $Q_K$ when $W$ is a finite Weyl group was given in \cite{HL1}. Our  approach to handle $Q_K$ are different. As to the augmented element $\hat{0}$, we follow a similar idea as in \cite{LW}.  
%The following result is established by Dyer in \cite[Proposition 2.5\&3.9]{Dyer}. 

%\htodo{I think EL-shellable is for a closed interval to be EL-shellable.}

\subsection{Combinatorial atlas}
Let $K \subset I$. Recall $Q_K=\{(v, w) \in W \times W^K \mid v \le w\}$ equipped with the partial order $\preceq$ defined by 
\[
 (v', w') \preceq (v, w)  \text{ if there exists } u \in W_K \text{ such that } v \le v' u \le w' u \le w. 
\]

The construction of $\tilde{W}$ in \S\ref{sec:tildeG} remains valid for arbitrary Coxeter group $W$.

\begin{prop}\label{prop:tW}
	Define the map 
	$$
	\tilde \nu: Q_K  \to \tilde W, (v, w) \mapsto (v )^\flat (w \i)^\sharp. 
	$$ Then
	\begin{enumerate}
	\item the map $\tilde \nu$ induces an isomorphism of the posets $(Q_K, \preceq)$ and $(\tilde \nu(Q_K), \leI)$; 
	    \item $
	\tilde \nu (Q_K)= \{\tilde{w} \in \tW_{I^\flat} \tW_{I^\sharp} \mid  r^\flat (r \i)^\sharp \leI \tilde{w}  \text{ for some } r \in W^K\}$;
	\item $\tilde \nu (Q_K)$ is a convex subset of the poset $(\tW, \leI)$.
	\end{enumerate}
\end{prop}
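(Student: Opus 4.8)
The plan is to deduce all three statements from the structure of the parabolic factorization of $\tilde{W}$ along $\tilde{W}_{I^\flat}$ and ${}^{I^\flat}\tilde{W}$, together with the Billig--Dyer description of the twisted order $\leI$. The two structural inputs I would record first are: (a) $\tilde{W}_{I^\flat}$ and $\tilde{W}_{I^\sharp}$ are standard parabolics of $\tilde{W}$ with $\tilde{W}_{I^\flat}\cap\tilde{W}_{I^\sharp}=\tilde{W}_K$, the reflections of $I^\flat\setminus K$ commuting with those of $I^\sharp\setminus K$; and (b) a standard parabolic keeps positive every positive root lying outside its own sub-root-system. Since a simple root $\tilde{\alpha}_{i^\flat}$ lies in $\tilde{\Delta}_{I^\sharp}$ exactly when $i\in K$, it follows for $u\in W$ that $u^\sharp\in{}^{I^\flat}\tilde{W}$ if and only if $u\in{}^{K}W$: the left descents at $k^\flat=k^\sharp$ ($k\in K$) are governed by the ${}^{K}W$-condition, while for $i\in I\setminus K$ the root $\tilde{\alpha}_{i^\flat}\in\tilde{\Delta}^+\setminus\tilde{\Delta}_{I^\sharp}$ is kept positive by $u^\sharp$. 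Hence $\tilde{W}_{I^\sharp}\cap{}^{I^\flat}\tilde{W}=\{(w^{-1})^\sharp\mid w\in W^K\}$, the product $v^\flat(w^{-1})^\sharp$ is exactly the $(\tilde{W}_{I^\flat},{}^{I^\flat}\tilde{W})$-factorization of $\tilde{\nu}(v,w)$, and the identity ${}^{J}\ell(xy)=\ell(y)-\ell(x)$ gives ${}^{I^\flat}\ell(\tilde{\nu}(v,w))=\ell(w)-\ell(v)=\dim\mathring{\Pi}_{v,w}$. In particular $\tilde{\nu}$ is injective.

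For (2) I would first observe that the factorization argument applies to any $\tilde{w}\in\tilde{W}_{I^\flat}\tilde{W}_{I^\sharp}$: its ${}^{I^\flat}\tilde{W}$-part is the minimal representative of $\tilde{W}_{I^\flat}\tilde{w}=\tilde{W}_{I^\flat}\tilde{b}$ with $\tilde{b}\in\tilde{W}_{I^\sharp}$, which by (a)--(b) again lies in $\tilde{W}_{I^\sharp}$ and has the form $(w^{-1})^\sharp$, $w\in W^K$. Thus every $\tilde{w}\in\tilde{W}_{I^\flat}\tilde{W}_{I^\sharp}$ is uniquely $v^\flat(w^{-1})^\sharp$ with $v\in W$, $w\in W^K$, and the content of (2) is the equivalence ``$v\le w$'' $\Leftrightarrow$ ``$r^\flat(r^{-1})^\sharp\leI\tilde{w}$ for some $r\in W^K$''. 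The base points $r^\flat(r^{-1})^\sharp=\tilde{\nu}(r,r)$ are precisely the images of the minimal elements $(r,r)$ of $Q_K$. Since $\leI$ restricts on $\tilde{W}_{I^\flat}$ to the reverse Bruhat order (there ${}^{I^\flat}\ell=-\ell$), the relation $v\le w$ translates into $w^\flat\leI v^\flat$, and right-multiplying by the common factor $(w^{-1})^\sharp$ exhibits the base point $r=w$; this proves $\tilde{\nu}(Q_K)\subseteq$ the right-hand set. The reverse inclusion is extracted from the cover analysis below.

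The heart of the argument, which simultaneously yields (1), the remaining half of (2), and (3), is a cover analysis for $\leI$ restricted to $\tilde{W}_{I^\flat}\tilde{W}_{I^\sharp}$. For a cover $\tilde{z}\lessdot_{I^\flat}\tilde{z}'$ realized by $s_\beta$ ($\beta\in\Psi_{I^\flat}$, $(\tilde{z}')^{-1}\beta<0$), with $\tilde{z}=a^\flat(b^{-1})^\sharp$, I would show that $\tilde{z}'$ again has normal form $(a')^\flat((b')^{-1})^\sharp$ with $b'\in W^K$, and that the induced move $(a,b)\mapsto(a',b')$ is exactly a covering move of $\preceq$: the reflections with $\beta\in\tilde{\Delta}_{I^\flat}$ act on the $\flat$-factor and give the covers $a\lessdot a'$ with $b$ fixed, while the remaining reflections give the moves that change $b$ within $W^K$, the representative shift being absorbed by the element $u\in W_K$ in the definition of $\preceq$. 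Granting the match with the known cover description of $\preceq$ \cite{KLS,LW}: order-preservation and order-reflection between covers give the poset isomorphism (1), both sides being graded by $\ell(w)-\ell(v)$; running a saturated $\leI$-chain from a base point $\tilde{\nu}(r,r)$ (which satisfies $r\le r$) up to $\tilde{w}$ forces $v\le w$, closing (2); and the fact that a cover cannot leave $\tilde{\nu}(Q_K)$ while staying above a base point gives the convexity of $\tilde{\nu}(Q_K)$ in $(\tilde{W},\leI)$, i.e.\ (3).

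The main obstacle throughout is the overlap region $K$. Because $\tilde{W}_{I^\flat}$ and $\tilde{W}_{I^\sharp}$ share $\tilde{W}_K$, a reflection realizing a $\leI$-cover of $v^\flat(w^{-1})^\sharp$ by a root outside $\tilde{\Delta}_{I^\flat}$ generally does not commute past $v^\flat$ and mixes the two factors through $\tilde{W}_K$; rewriting the result back into normal form with the $\sharp$-part a minimal $W^K$-representative is exactly where the $u\in W_K$ in the definition of $\preceq$ enters. Verifying that this bookkeeping matches the cover relations of $\preceq$ on the nose, and that it keeps the element inside $\tilde{W}_{I^\flat}\tilde{W}_{I^\sharp}$ for convexity, is the delicate part; by contrast, the factorization, the rank identity, and the $\flat$-factor covers are routine once (a)--(b) are in place. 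The combinatorial shadow of Proposition~\ref{prop:LL} is a useful guide here, though in the present generality (arbitrary Coxeter $W$) it must be argued purely in terms of the root system and length function of $\tilde{W}$ rather than the group.
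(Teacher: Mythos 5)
Your structural setup (the factorization $\tilde{w}=a^\flat (b^{-1})^\sharp$ with $b\in W^K$, injectivity of $\tilde\nu$, and the rank identity) is correct and agrees with what the paper uses. But the heart of your plan, the cover analysis, rests on a claim that is not just unproved but false: that an upward $\leI$-cover of an element of $\tilde\nu(Q_K)$ again lies in $\tW_{I^\flat}\tW_{I^\sharp}$ (in normal form), at least while it stays above a base point. Counterexample: take $W=S_3$ with simple reflections $s_1,s_2$ and $K=\{2\}$, so $\tW=S_4$ with simple reflections $s_{1^\flat}=(1\,2)$, $s_2=(2\,3)$, $s_{1^\sharp}=(3\,4)$; here $\tW_{I^\flat}$ is the set of permutations fixing $4$ and $\tW_{I^\sharp}$ the set of those fixing $1$. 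Let $\tilde z=\tilde\nu(e,s_2s_1)=s_{1^\sharp}s_2$ and $\tilde z'=s_{1^\sharp}s_2s_{1^\flat}$, and let $\beta=\alpha_{1^\flat}+\alpha_2+\alpha_{1^\sharp}$ be the highest root, which lies in $\Psi^+_{I^\flat}$. Then $\tilde z=s_\beta\tilde z'$ and $(\tilde z')^{-1}(\beta)<0$, so $\tilde z\leI\tilde z'$ by the defining relations of the twisted order; both elements lie in ${}^{I^\flat}\tW$, with twisted lengths $2$ and $3$, so this is a cover. Moreover $\tilde z'$ does stay above a base point, since $\tilde\nu(s_2s_1,s_2s_1)\leI\tilde z\leI\tilde z'$. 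Yet $\tilde z'\notin\tW_{I^\flat}\tW_{I^\sharp}$: any product $uv$ with $u$ fixing $4$ and $v$ fixing $1$ sends $1$ into $\{1,2,3\}$, whereas $\tilde z'(1)=4$. So covers can leave $\tilde\nu(Q_K)$ while staying above a base point; consequently your mechanism for (3), and the bottom-up chain argument you use to close (2), both collapse. What is true (and is what the paper proves) is \emph{downward} closedness: everything $\leI$-below an element of $\tW_{I^\flat}\tW_{I^\sharp}$ stays in that set; convexity of $\tilde\nu(Q_K)$ is forced by the \emph{upper} endpoint of an interval, not by covers above the lower one.

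There are two further gaps. First, your inclusion $\tilde\nu(Q_K)\subseteq\{\cdots\}$ in (2) is justified by ``right-multiplying by the common factor $(w^{-1})^\sharp$''; twisted Bruhat orders, like the ordinary Bruhat order, are not invariant under right multiplication, so this step needs a genuine argument (in the paper it is immediate from $(w,w)\preceq(v,w)$ once (1) is available). Second, you propose to cite the cover description of $\preceq$ from \cite{KLS,LW}; those results are for finite Weyl groups, while the proposition is asserted for an arbitrary Coxeter group $W$ --- indeed purity/thinness of $(Q_K,\preceq)$ in this generality is precisely what the paper \emph{deduces} from this proposition, so the citation is both unavailable and essentially circular. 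There is also the subtlety that even a correct cover correspondence inside $\tilde\nu(Q_K)$ would only identify $\preceq$ with the internally generated order, not with the order induced from $(\tW,\leI)$, whose chains may a priori exit the image; again this needs the closedness statement first. The paper's combinatorial proof avoids all of this: its engine is Proposition~\ref{prop:leJ} ($x'(y')^{-1}\leI xy^{-1}$ if and only if there exists $u\in\tW_{I^\flat}$ with $x\le x'u$ and $y'u\le y$), proved from the generating relations of the twisted order rewritten via the factorization (Corollary~\ref{cor:leJ1}) together with the lifting Lemma~\ref{lem:lege}; parts (1), (2), (3) are then short direct applications, with no cover analysis and no gradedness input. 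If you want to rescue your approach, you should first prove a characterization of this type; the cover route cannot work as stated.
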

We shall first give a geometric proof of the proposition in \S\ref{sec:geo} when $W$ is a Weyl group of some Kac-Moody group $G$. We then give a combinatorial proof of the proposition in \S\ref{sec:com} for an arbitrary Coxeter group $W$.

%%%%%%%%%%%%%%%%%%%%%%%%%%%%%%%%%%%%%%%%%%%%%%%%%%%%%%%%%%%%%%%%%%%%%%%%%%%%%
%%%%%%%%%%%%%%%%%%%%%%%%%%%%%%%%%%%%%%%%%%%%%%%%%%%%%%%%%%%%%%%%%%%%%%%%%%%%%

%%%%%%%%%%%%%%%%%%%%%%%%%%%%%%%%%%%%%%%%%%%%%%%%%%%%%%%%%%%%%%%%%%%%%%%%%%%%%
%%%%%%%%%%%%%%%%%%%%%%%%%%%%%%%%%%%%%%%%%%%%%%%%%%%%%%%%%%%%%%%%%%%%%%%%%%%%%
\subsection{Geometric proof of Proposition~\ref{prop:tW}}\label{sec:geo}
In this subsection, we assume that $W$ is a Weyl group of some Kac-Moody group $G$. We deduce Proposition~\ref{prop:tW} for such $W$ as a consequence of Theorem~\ref{thm:BBatlas1}.
 
Recall \S\ref{sec:QK} that the poset $(Q_K, \preceq)$ is the index set of the stratification of $\CP_K$ into the unions of projected Richardson varieties. By \eqref{eq:closure}, the poset $( \tilde{W}, \leI)$ is the index set of the stratification of $\tilde{X}$ into the unions of the $I^\flat$-Schuber varieties. Proposition~\ref{prop:tW} (1) follows directly from  Theorem~\ref{thm:BBatlas1}. 

%Recall \S\ref{sec:QK} that the poset $(Q_K, \preceq)$ is the poset of  closure relations of $\CP_K$ with respect to the stratification by projected Richardson varieties. On the other hand, the poset $( \tilde{W}, \leI)$ is the poset of closure relations of $\tilde{X}$ with respect to the stratification by $I^\flat$-Schuber varieties thanks to \eqref{eq:closure}. The first claim follows now from Theorem~\ref{thm:BBatlas1}.

We show Proposition~\ref{prop:tW} (2). By Proposition~\ref{prop:intersection}, we have that 
\[
c_r(U_r) = \!\! \bigsqcup_{ \stackrel{\tilde{w} \in \tW_{I^\flat} \tW_{I^\sharp},}{ r^\flat (r \i)^\sharp \leI \tilde{w} }} \!\! {}^{I^\flat} \mathring{\tilde X}_{\tilde \nu(r, r)} \cap {}^{I^\flat} \mathring{\tilde X}^{\tilde w}
\] and for any $\tilde{w} \in \tW_{I^\flat} \tW_{I^\sharp}$ with $r^\flat (r \i)^\sharp \leI \tilde{w}$, ${}^{I^\flat} \mathring{\tilde X}_{\tilde \nu(r, r)} \cap {}^{I^\flat} \mathring{\tilde X}^{\tilde w} \neq \emptyset$. Hence $$\tilde{\nu}(\{(v, w) \in Q_K \mid U_r \cap \mathring{\Pi}_{v, w} \neq \emptyset\})=\{\tilde{w} \in \tW_{I^\flat} \tW_{I^\sharp} \mid r^\flat (r \i)^\sharp \leI \tilde{w}\}.$$

We then show Proposition~\ref{prop:tW} (3).    Let $x, y \in \tilde \nu(Q_K)$ with $r^\flat (r \i)^\sharp \leI x$ for some $r \in W^K$.   Since $\tilde{P}^+_{I^\flat}\tilde{P}^+_{I^\sharp}/ \tilde{B}^+$ is closed in $\tilde{G}/\tilde{B}^+$, we see that $\tW_{I^\flat} \tW_{I^\sharp}$  is closed in $\tilde{W}$ under the partial order $\leI$ thanks to Proposition~\ref{prop:LL1}.
Then for any $z \in [x, y]$, since $z \leI y$, we have $z \in \tW_{I^\flat} \tW_{I^\sharp}$. We also have $r^\flat (r \i)^\sharp \leI x \leI z$, hence $z \in \tilde \nu(Q_K)$.

Proposition~\ref{prop:tW} then follows for such $W$.

 %%%%%%%%%%%%%%%%%%%%%%%%%%%%%%%%%%%%%%%%%%%%%%%%%%%%%
%%%%%%%%%%%%%%%%%%%%%%%%%%%%%%%%%%%%%%%%%%%%%%%%%%%%%

%%%%%%%%%%%%%%%%%%%%%%%%%%%%%%%%%%%%%%%%%%%%%%%%%%%%%
%%%%%%%%%%%%%%%%%%%%%%%%%%%%%%%%%%%%%%%%%%%%%%%%%%%%%

\subsection{The partial order $\leJ$} 
Let $W$ be an arbitrary Coxeter group and $J$ be a subset of the set of simple reflections in $W$.
In this subsection, we give an equivalent description of the partial order $\leJ$ defined in \S\ref{sec:J-Schubert}.  
\begin{lem}
 Let $x\in W_J$ and $y\in W^J$. The partial order $\leJ$ is generated by 
      \begin{enumerate}
      \item [(a)] $s_\b x y \i {\,\,{}^J\!\! < } x y \i$ for $\b \in \D^{re, -}_J$ with $x < s_\b x$;
      
      \item [(b)] $x s_{\b'} y \i {\,\,{}^J\!\! < } x y \i$ for $\b' \in \D^{re, +}$ with $s_{\b'} y \i<y \i$. 
      \end{enumerate}
\end{lem}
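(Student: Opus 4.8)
The plan is to prove the lemma by showing that the defining generating relations of $\leJ$ and the relations (a)--(b) are literally the \emph{same} set of pairs $(s_\b w, w)$ once $w$ is factored appropriately; since two partial orders generated by the same set of relations coincide, this suffices. Recall from \S\ref{sec:J-Schubert} that $\leJ$ is generated by the relations $s_\b w \, {}^J\!\! < w$ ranging over $w \in W$ and $\b \in \Psi_J^+ = \D^{re, -}_J \sqcup (\D^{re, +}-\D^{re, +}_J)$ with $w\i(\b) \in \D^{re, -}$. First I would fix the unique factorization $w = x y\i$ with $x \in W_J$ and $y \in W^J$ (equivalently $y\i \in {}^J W$), and record the two standard sign facts used throughout: (i) every $x \in W_J$ permutes $\D^{re}_J$ and maps $\D^{re, +}-\D^{re, +}_J$ bijectively onto itself; (ii) every $y \in W^J$ preserves the sign of each root of $\D^{re}_J$, that is $y(\D^{re, \pm}_J) \subseteq \D^{re, \pm}$, this being the defining property of minimal coset representatives.

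Next I would split the generators according to the two pieces of $\Psi_J^+$. If $\b \in \D^{re, -}_J$, then $s_\b \in W_J$, so $s_\b w = (s_\b x) y\i$; moreover $w\i(\b) = y\,x\i(\b)$ with $x\i(\b) \in \D^{re}_J$, so by fact (ii) the sign of $w\i(\b)$ equals that of $x\i(\b)$. Writing $\b = -\gamma$ with $\gamma \in \D^{re, +}_J$, this turns the condition $w\i(\b) \in \D^{re, -}$ into $x\i(\gamma) \in \D^{re, +}$, which is precisely $x < s_\b x$ by the standard criterion $s_\gamma x > x \Leftrightarrow x\i(\gamma) > 0$. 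This identifies the generators with $\b \in \D^{re, -}_J$ with the relations (a).

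For the remaining generators, $\b \in \D^{re, +}-\D^{re, +}_J$, I would set $\b' = x\i(\b)$; by fact (i) we get $\b' \in \D^{re, +}-\D^{re, +}_J \subset \D^{re, +}$, and the identity $s_\b = x s_{\b'} x\i$ gives $s_\b w = x s_{\b'} y\i$. Since $w\i(\b) = y(\b')$, the condition $w\i(\b) \in \D^{re, -}$ becomes $y(\b') \in \D^{re, -}$, that is $s_{\b'} y\i < y\i$, matching the relations (b). I would further observe that $\b \mapsto x\i(\b)$ is a bijection of $\D^{re, +}-\D^{re, +}_J$ onto itself, and that for $\b' \in \D^{re, +}_J$ fact (ii) forces $y(\b') \in \D^{re, +}$, so the condition in (b) can never be met there. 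Hence the nominal range $\b' \in \D^{re, +}$ in (b) automatically restricts to $\D^{re, +}-\D^{re, +}_J$, yielding an exact correspondence with this subcase.

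Combining the two subcases, the defining generating set of $\leJ$ coincides pair by pair with the relations (a) and (b), so both generate the same partial order. The only real difficulty is the bookkeeping in passing between the left-translation sign condition $w\i(\b) \in \D^{re, -}$ and the Bruhat-type conditions $x < s_\b x$ and $s_{\b'} y\i < y\i$; both reductions rest entirely on the sign-preservation facts (i) and (ii), so I would state and justify these carefully before running the two short case computations.
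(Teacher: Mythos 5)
Your proposal is correct and takes essentially the same approach as the paper's proof: both split the defining generators of $\leJ$ according to the two pieces $\D^{re, -}_J$ and $\D^{re, +}-\D^{re, +}_J$ of $\Psi_J$, rewrite the sign condition $w\i(\b) \in \D^{re, -}$ through the factorization $w = x y\i$ using the sign-preservation properties of $x \in W_J$ and $y \in W^J$ (together with $s_\b = x s_{\b'} x\i$ in the second case), and observe that the condition in (b) automatically rules out $\b' \in \D^{re, +}_J$. Your write-up merely spells out the standard facts that the paper invokes implicitly.
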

\begin{proof}
 Recall $\Psi_J =\D^{re, -}_J \sqcup (\D^{re, +}-\D^{re, +}_J)$. Let $\b \in \Psi_J$ with $s_{\b} x y \i \leJ x y\i$, hence $y x \i(\b) \in \D^{re, -}$. 
      
      If $\b \in \D^{re, -}_J$, then $x \i(\b) \in \D^{re}_J$. Since $y \in W^J$, $y x \i(\b) \in \D^{re, -}$ is equivalent to $x \i(\b) \in \D^{re, -}_J$. In this case, we equivalently have $s_\b x>x$. 
      
      If $\b \in \D^{re, +}-\D^{re, +}_J$, then $x \i(\b) \in \D^{re, +}-\D^{re, +}_J$. Set $\b'=x \i(\b)$. Then $y(\b') \in \D^{re, -}$. In this case, $s_{\b'} y \i<y \i$. On the other hand, if $s_{\b'} y \i<y \i$ for some $\b' \in \D^{re, +}$, then we must have $\b' \notin \D^{re, +}_J$ since $y \in W^J$. 
            \end{proof}

\begin{cor}\label{cor:leJ1}
Let $x\in W_J$ and $y\in W^J$. The partial order $\leJ$ is generated by 
    \begin{enumerate}
      \item [(a$'$)] $x_1 y \i {\,\,{}^J\!\! < }  x y \i$ for $x_1 \in W_J$ with $x<x_1$. 
      
      \item [(b$'$)] $x y_1 \i {\,\,{}^J\!\! < }  x y \i$ for $y_1 \in W$ with $y_1<y$. 
      \end{enumerate}
\end{cor}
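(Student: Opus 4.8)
The plan is to show that the relations (a$'$) and (b$'$) generate the same partial order as the relations (a) and (b) of the previous lemma, which equals $\leJ$. One inclusion is immediate: every relation (a) is an instance of (a$'$), since for $\b\in\D^{re,-}_J$ the element $x_1:=s_\b x$ lies in $W_J$ and satisfies $x<x_1$; and every relation (b) is an instance of (b$'$), since for $\b'\in\D^{re,+}$ with $s_{\b'}y^{-1}<y^{-1}$ the element $y_1:=ys_{\b'}$ satisfies $y_1<y$ (Bruhat order is preserved by inversion) and $xy_1^{-1}=xs_{\b'}y^{-1}$. Hence the order generated by (a$'$),(b$'$) contains every generator of $\leJ$, so it contains $\leJ$.

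For the reverse inclusion it suffices to check that each relation (a$'$) and (b$'$) is a valid $\leJ$-relation; transitivity then gives that the order they generate is contained in $\leJ$. The relation (a$'$) is routine: given $x<x_1$ in $W_J$, choose a saturated chain $x=u_0\lessdot u_1\lessdot\cdots\lessdot u_k=x_1$ inside the Coxeter group $W_J$ (covers in $W_J$ agree with covers in $W$). Writing each cover as a left reflection $u_{j+1}=s_{\b_j}u_j$ with $\b_j\in\D^{re,-}_J$ and $u_j<s_{\b_j}u_j$, the relation (a) yields $u_{j+1}y^{-1}\,{}^{J}\!\!<\,u_jy^{-1}$; concatenating gives $x_1y^{-1}\leJ xy^{-1}$.

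The relation (b$'$) is the heart of the matter. For a single cover $y_1=ys_{\b'}\lessdot y$ it is exactly (b). For a general $y_1<y$ the natural idea is to take a Bruhat chain from $y$ down to $y_1$ and to translate each descent $v\gtrdot vs_{\b'}$ by means of the identity $xs_{\b'}v^{-1}=s_{x(\b')}(xv^{-1})$: whenever $x(\b')\in\Psi_J$ this exhibits $xs_{\b'}v^{-1}\,{}^{J}\!\!<\,xv^{-1}$ as a single generator of $\leJ$, because $(xv^{-1})^{-1}(x(\b'))=v(\b')\in\D^{re,-}$. Since $y\in W^J$ sends $\D^{re,+}_J$ into $\D^{re,+}$, the first descent necessarily uses a root $\b'\in\D^{re,+}\smallsetminus\D^{re,+}_J$, and then $x(\b')\in\D^{re,+}\smallsetminus\D^{re,+}_J\subset\Psi_J$; a descent with $x(\b')\in\D^{re,-}$ is equally harmless. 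The obstacle is that after the first descent the intermediate elements leave $W^J$, and a descent with $\b'\in\D^{re,+}_J$ and $x(\b')\in\D^{re,+}_J$ is \emph{not} a single generator. In fact the corresponding step $xs_{\b'}v^{-1}\leJ xv^{-1}$ can genuinely fail for $v\notin W^J$ (already in rank two), so one cannot translate an arbitrary Bruhat chain term by term, nor can one simply route through the $W^J$-projection of $y_1$.

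To overcome this I would prove the equivalent statement that $xy_1^{-1}\leJ xy^{-1}$ for all $y\in W^J$, $y_1\le y$ and $x\in W_J$ by induction, using the description of $\leJ$ as Dyer's twisted Bruhat order on inverses (the remark following the definition of $\leJ$, and \cite{BD}) together with the lifting property of that twisted order. Concretely, I would induct on $\ell(x)+(\ell(y)-\ell(y_1))$, peel off a left descent $s_j$ ($j\in J$) of $x$ whenever $x\neq e$ (this step is governed by (a)), and otherwise peel off a descent organizing $y$ (governed by (b)), pushing the inequality between the $s_j$-translates via the twisted lifting property so as to reduce to strictly smaller data. The main difficulty, and the place where the naive termwise translation of a Bruhat chain breaks down, is precisely the interaction between the left $W_J$-action controlled by (a) and the right descents of $y$ controlled by (b) when the relevant root lies in $\D^{re,+}_J$; reconciling these two through the lifting property of $\le_{\Psi_J}$ is what the argument really turns on.
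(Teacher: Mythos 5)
The parts of your argument that you actually carry out are correct, and they are exactly the routine part: relations (a) and (b) of the preceding lemma are instances of (a$'$) and (b$'$), and every instance of (a$'$) follows by applying (a) along a saturated chain in $W_J$. Your diagnosis of (b$'$) is also accurate: a single reflection step down from $y$ is exactly (b), but a general Bruhat chain from $y$ down to $y_1$ cannot be pushed through $w \mapsto xw^{-1}$ term by term. Indeed, in type $A_2$ with $J=\{1\}$ and $x=e$, the Bruhat cover $e \lessdot s_1$ corresponds to the inequality $e \leJ s_1$, which is false (by the finite-type remark, $w' \leJ w$ iff $s_1 w' \le s_1 w$, so in fact $s_1 \leJ e$); and routing through the $W^J$-factor of $y_1=y'u$ would require $x \le xu^{-1}$, which also fails in general. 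Note that the paper offers no proof of this corollary at all—it is presented as immediate from the lemma—so your observation that (b$'$) needs a real argument is a point in your favor.

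But at precisely this self-identified crux your proposal stops being a proof. What remains to be shown is that $xy_1^{-1} \leJ xy^{-1}$ for all $x \in W_J$, $y \in W^J$, $y_1 \le y$, and for this you offer only a plan: an induction on $\ell(x)+(\ell(y)-\ell(y_1))$ invoking an unstated ``lifting property'' of the twisted order, with no precise formulation, no proof or citation, and no check that the inductive step closes. As described, it does not close. Take $J=\{1\}$, $x=s_1$, $y=s_1s_2$, $y_1=s_1$, so the goal is $e \leJ s_1s_2s_1$ (which is true). Peeling the descent $s_1$ of $x$, the inductive hypothesis gives $s_1 \leJ s_2s_1$, and one must pass to the $s_1$-translates $e$ and $s_1s_2s_1$. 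But $e$ is the \emph{larger} element of the pair $\{s_1,e\}$ in $\leJ$, while $s_1s_2s_1$ is the \emph{smaller} element of the pair $\{s_2s_1,\,s_1s_2s_1\}$; lifting-type statements yield $s_1 \leJ s_1s_2s_1$ and $e \leJ s_2s_1$, never $e \leJ s_1s_2s_1$. A version that does work is to induct on $\ell(y)$ using a \emph{left} descent $s$ of $y$ (left descents preserve $W^J$): if $sy_1 > y_1$, then $y_1 \le sy$ and one needs only the induction hypothesis together with the single relation (b) giving $x(sy)^{-1} \leJ xy^{-1}$ (with root $\beta'=-y^{-1}(\alpha_s)\in \D^{re,+}$); if $sy_1 < y_1$, then $sy_1 \le sy$ by ordinary Bruhat lifting, and one concludes from $x(sy_1)^{-1} \leJ x(sy)^{-1}$ via the right-multiplication lifting (Z-)property of $\leJ$—this last property being the one genuinely nontrivial input, which must be quoted from Dyer's theory of twisted Bruhat orders or proved. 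Until such an input is stated and justified and the induction is arranged so that every case closes, your proof of (b$'$), and hence of the corollary, is incomplete.
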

%Note that any element in $W$ can be written in a unique way as $x y \i$ for $x \in W_J$ and $y \in W^J$. 

\begin{lemma}\cite[Lemma A.3]{HL} \label{lem:lege}
	Let $x, x', u \in W$ such that $x \le x' u$. Then there exists $u' \le u$, such that $x (u' )\i \le x'$.
%	
%	
%	
%	Then the following are equivalent: 
%	
%	(1) There exists $u' \le u$ such that $ x \le y u'$; 
%	
%	(2) There exists $u' \le u$ such that $x (u') \i  \le y$. 
\end{lemma}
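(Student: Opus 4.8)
The plan is to prove the statement by induction on $\ell(u)$, using the standard lifting property of the Bruhat order: if $a<b$ and $s$ is a simple reflection with $as>a$ and $bs<b$, then $as\le b$ and $a\le bs$. The base case $\ell(u)=0$ is immediate, since then $u=e$ and $x\le x'$, so we may take $u'=e$.

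For the inductive step I would choose a simple reflection $s$ that is a right descent of $u$, and write $u=u_1 s$ with $u_1=us$ and $\ell(u_1)=\ell(u)-1$, so that $x'u=x'u_1 s$. The argument then splits according to the relative position of $x'u_1$ and $x'u=x'u_1 s$. If $x'u_1>x'u$, then $x\le x'u<x'u_1$, and the inductive hypothesis applied to $x\le x'u_1$ produces $u'\le u_1\le u$ with $x(u')\i\le x'$, which is exactly what we want.

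The remaining case is $x'u_1<x'u$, where the lifting property does the work. Here I would distinguish whether $s$ is a right ascent or descent of $x$. If $xs>x$, then $x<x'u$ strictly (else $xs=(x'u)s=x'u_1<x'u=x$, a contradiction), so applying lifting to $x\le x'u$ with $(x'u)s=x'u_1<x'u$ gives $x\le x'u_1$, and induction on $x\le x'u_1$ finishes the case. If $xs<x$, then applying lifting to $xs<x\le x'u$ (now $(xs)s=x>xs$ and $(x'u)s<x'u$) gives $xs\le x'u_1$; by induction there is $u''\le u_1$ with $(xs)(u'')\i\le x'$, and I would set $u'=u''s$, so that $x(u')\i=xs(u'')\i=(xs)(u'')\i\le x'$.

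The one point that needs care — and the main obstacle — is verifying that this $u'=u''s$ still satisfies $u'\le u$. This follows from a further application of the lifting property: since $u''\le u_1=us$ and $us<u$, one checks $u''s\le u$ by treating the subcases $u''s>u''$ (lifting, using $us<u$) and $u''s<u''$ (then $u''s<u''\le us<u$ directly). Assembling the cases completes the induction. I expect the bookkeeping in the case $xs<x$ — reinserting the peeled reflection $s$ into the witness $u''$ and confirming $u''s\le u$ — to be the only delicate step; everything else is a direct appeal to the lifting property and the inductive hypothesis.
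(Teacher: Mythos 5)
Your proof is correct, but it is not the argument the paper uses. The paper does not prove the lemma from scratch: its entire proof is a notational translation showing the statement is equivalent to \cite[Lemma A.3]{HL} (which is phrased in terms of the operations $\vartriangleleft$ and $\ast$, Demazure-product-type actions of $W$ on itself), followed by a citation of that result. You instead give a self-contained induction on $\ell(u)$ driven solely by the lifting property: peel a right descent $s$ off $u$, handle the case $x'us > x'u$ by direct induction, and when $x'us < x'u$ split on whether $s$ is an ascent or a descent of $x$, in the latter case reinserting $s$ into the witness via $u' = u''s$. I checked the details, including the step you flagged as delicate: $u''s \le u$ does follow, since $u'' \le us < u$ and either $u''s < u''$ (so $u''s < u''\le us < u$ directly) or $u''s > u''$ (so the lifting property applied to $u'' < u$ with $us < u$ gives $u''s \le u$); likewise $x(u''s)^{-1} = (xs)(u'')^{-1} \le x'$ since $s = s^{-1}$. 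What each approach buys: the paper's route is a two-line reduction but makes the lemma depend on the machinery of \cite{HL}, whereas yours is longer but elementary, self-contained, and manifestly valid in an arbitrary Coxeter group --- which is exactly the generality in which the lemma is invoked in Section 4.
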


\begin{proof}
In the notations of \cite[Lemma A.3]{HL}, Lemma~\ref{lem:lege} is equivalent to 
\[
	x  \vartriangleleft u^{-1}  \le x' \text{ if and only if } x \le x'  \ast u.
\]
The lemma follows.
\end{proof}

\begin{proposition}\label{prop:leJ}
      Let $x, x' \in W_J$ and $y, y' \in W^J$. The following conditions are equivalent: 
      \begin{enumerate}
       \item $x' (y') \i \leJ x y \i$;
       
       \item  There exists $u \in W_J$ such that $x \le x' u$ and $y' u \le y$. 
       \end{enumerate}
\end{proposition}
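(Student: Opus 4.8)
The plan is to deduce both implications from the generating relations for $\leJ$ recorded in Corollary~\ref{cor:leJ1}, using Lemma~\ref{lem:lege} as the bridge between Bruhat order and multiplication. I will freely use two standard facts about the factorization $W=W^JW_J$: first, for $y'\in W^J$ one has $\ell(y'w)=\ell(y')+\ell(w)$ for every $w\in W_J$, so $w\mapsto y'w$ is an order-embedding of $(W_J,\le)$ into $W$; second, if $\ell(qr)=\ell(q)+\ell(r)$ and $p\le q$, then $pr\le qr$. I regard the pair $(x,y)\in W_J\times W^J$ as the coordinates of $xy^{-1}$, and for a general $y_1\in W$ I write $y_1=y_1^J(y_1)_J$ with $y_1^J\in W^J$ and $(y_1)_J\in W_J$.

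For $(2)\Rightarrow(1)$ I would start from $u\in W_J$ with $x\le x'u$ and $y'u\le y$ and apply Lemma~\ref{lem:lege} to $x\le x'u$ to obtain $u'\le u$ with $x(u')^{-1}\le x'$; since $y'\in W^J$ and $u'\le u$, also $y'u'\le y'u\le y$. Then I build a two-step descending chain. As $y'u'\le y$, relation (b$'$) of Corollary~\ref{cor:leJ1} gives $x(y'u')^{-1}\leJ xy^{-1}$. Since $x(y'u')^{-1}=x(u')^{-1}(y')^{-1}$ has $W_J$-component $x(u')^{-1}$ and $W^J$-component $y'$, and $x(u')^{-1}\le x'$ in $W_J$, relation (a$'$) gives $x'(y')^{-1}\leJ x(u')^{-1}(y')^{-1}$. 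Concatenating these proves $x'(y')^{-1}\leJ xy^{-1}$; note this direction uses only the stated form of Lemma~\ref{lem:lege}.

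For $(1)\Rightarrow(2)$ I would induct on the length of a chain of generating relations (a$'$), (b$'$) from $xy^{-1}$ down to $x'(y')^{-1}$, peeling off the top step. The trivial chain forces $x=x'$, $y=y'$, and $u=e$ works. Otherwise let the first step reduce $xy^{-1}$ to $z_1$ and write $z_1=p\,q^{-1}$ with $p\in W_J$, $q\in W^J$; the inductive hypothesis supplies $u_1\in W_J$ with $p\le x'u_1$ and $y'u_1\le q$. If the step is of type (a$'$), then $p=x_1$ with $x<x_1$ and $q=y$, and $u=u_1$ works at once, since $x<x_1\le x'u_1$ and $y'u_1\le y$. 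If it is of type (b$'$), then $z_1=xy_1^{-1}$ with $y_1<y$; writing $c=(y_1)_J$ one computes $p=xc^{-1}$ and $q=y_1^J$, so the hypothesis reads $xc^{-1}\le x'u_1$ and $y'u_1\le y_1^J$. Applying the converse form of Lemma~\ref{lem:lege} to $xc^{-1}\le x'u_1$ produces $c'\le c$ with $x\le x'u_1c'$, and I set $u=u_1c'$. The condition $x\le x'u$ holds by construction, while $y'u\le y$ follows from $y'u_1c'\le y_1^Jc'\le y_1^Jc=y_1<y$, where the first inequality uses the second standard fact (with $\ell(y_1^Jc')=\ell(y_1^J)+\ell(c')$) and the second uses the first.

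The delicate point is exactly this type (b$'$) step: producing a single $u$ that meets both the upper constraint $x\le x'u$ and the lower constraint $y'u\le y$. The naive choice $u=u_1c$ fails because Bruhat order is not stable under right multiplication, so $xc^{-1}\le x'u_1$ does \emph{not} yield $x\le x'u_1c$. The remedy is to use Lemma~\ref{lem:lege} in both directions—equivalently, the Demazure-product equivalence recorded in its proof, which gives $xc^{-1}\le x'u_1\Rightarrow x\le(x'u_1)\ast c$ and hence the existence of a factor $c'\le c$ with $x\le x'u_1c'$—and then to observe that passing to the smaller $c'$ only tightens the $y$-side, keeping it below $y_1<y$. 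I expect confirming that this converse direction of Lemma~\ref{lem:lege} is available in the form needed to be the one thing requiring care; everything else is routine bookkeeping with the factorization $W=W^JW_J$.
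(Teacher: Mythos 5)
Your proof is correct, and your $(2)\Rightarrow(1)$ direction coincides with the paper's: normalize the witness by Lemma~\ref{lem:lege} to get $u'\le u$ with $x(u')\i\le x'$, observe $y'u'\le y'u\le y$ since $y'\in W^J$, and concatenate a type (b$'$) relation with a type (a$'$) relation from Corollary~\ref{cor:leJ1}. The genuine difference is in $(1)\Rightarrow(2)$. There the paper only verifies that each \emph{single} generating relation (a$'$), (b$'$) satisfies condition (2) and then declares the implication proved; this tacitly assumes that condition (2) is stable under concatenating generating steps (equivalently, that the relation in (2) is transitive), which is exactly the non-obvious point. Your induction on the length of a chain of generating relations supplies precisely this composition argument: the type (a$'$) step composes trivially, and the type (b$'$) step is where the content lies, since from $xc\i\le x'u_1$ one cannot simply multiply on the right to conclude $x\le x'u_1c$. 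Your remedy is sound: you invoke the equivalence $x\vartriangleleft u\i\le x'\Leftrightarrow x\le x'\ast u$ recorded in the paper's own proof of Lemma~\ref{lem:lege} (that is, the converse direction of the lemma, which does require the cited two-sided form from [HL] rather than the one-sided statement) to produce $c'\le c$ with $x\le x'u_1c'$, and then check $y'u_1c'\le y_1^Jc'\le y_1^Jc\le y$ using length-additivity in the factorization $W=W^JW_J$; the two ``standard facts'' you use (monotonicity of $p\mapsto pr$ when $\ell(qr)=\ell(q)+\ell(r)$, and of $w\mapsto y'w$ for $y'\in W^J$) are correct subword-property consequences. In short, your write-up is a strictly more complete version of the paper's argument: what the paper compresses into ``it suffices to replace condition (1) with the generating relations,'' you actually carry out.
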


\begin{proof}
      
      It suffices to replace condition (1) with the generating relations $(a')$ and $(b')$ considered in Corollary~\ref{cor:leJ1}. 
      
      Note that Corollary~\ref{cor:leJ1} $(a')  \Rightarrow(2)$ is trivial. For Corollary~\ref{cor:leJ1} $(b')$, we write $y_1$ as $y' u$ for $y' \in W^J$ and $u \in W_J$. Then $x y_1 \i=(x u \i) (y') \i$. And we have $(x u \i) u=x$ and $y' u \le y$. So $(1)\Rightarrow(2)$ is proved. 
      
      Now we show that $(2)\Rightarrow(1)$. Suppose that there exists $u \in W_J$ such that $x \le x' u$ and $y' u \le y$. By Lemma \ref{lem:lege}, there exists $u' \le u$ such that $x (u') \i \le x'$. Since $y' \in W^J$, $y' u' \le y' u \le y$. By Corollary~\ref{cor:leJ1}, we have 
      \[
      x' (y') \i \leJ x (u') \i (y') \i  =x (y' u') \i \leJ x y \i.  
      \]
      This finishes the proof. 
\end{proof}

\subsection{Combinatorial proof of Proposition~\ref{prop:tW}}\label{sec:com}

We show (1). For $w \in W^K$, we have $w \i \in {}^K W$ and hence $(w \i)^\sharp \in {}^{I^\flat} \tW$. It is easy to see that $\tilde{\nu}$ is injective. We prove the compatibility of the partial orders.  Let 
	\[
	(v, w), (v',  w') \in Q_K.
	\]
By Proposition \ref{prop:leJ}, $(v' )^\flat ((w') \i)^\sharp \,\, \leI (v )^\flat (w \i)^\sharp$  if and only if there exits $z \in \tW_{I^\flat}$ such that $(v )^\flat \le (v' )^\flat z \text{ and } z \i ((w') \i)^\sharp \le  (w \i)^\sharp$.
%	 By Proposition \ref{prop:leJ}, where $J = I^\flat$ in the current case. We have 
%	 \[
%		\begin{split}
%		 &(v' )^\flat ((w') \i)^\sharp \,\, \leI (v )^\flat (w \i)^\sharp  \text{ if and only if there exits } z \in \tW_{I^\flat},\\
%		&\text{ such that }(v )^\flat \le (v' )^\flat z \text{ and } z \i ((w') \i)^\sharp \le  (w \i)^\sharp.
%		 \end{split}
%	 \] 
	
	Note that $((w') \i)^\sharp, (w \i)^\sharp \in \tW_{I^\sharp}$. Thus $z \i ((w') \i)^\sharp \le  (w \i)^\sharp$ implies that $z \in \tW_{I^\sharp} \cap \tW_{I^\flat}$. In this case, $z^\natural \in W_K$. Hence 
	\[
	\begin{split}
	v ^\flat \le (v')^\flat z  
	&\Longleftrightarrow \,\, v  \le v'  z^\natural, \\
	z \i ((w') \i)^\sharp \le  (w \i)^\sharp  
	&\Longleftrightarrow \,\, w' z^\natural \le w.
	\end{split}
	\]
	 So $(v' )^\flat ((w') \i)^\sharp \,\, \leI v ^\flat (w \i)^\sharp$  if and only if $(v',  w') \preceq (v,  w)$. 
	 
	 We show (2). For any $(v,w)\in Q_K$, we have $(w,w) \preceq (v,w)$. Therefore we have $ w^\flat (w^{-1})^\sharp \leI \tilde{\nu}((v,w))$ for $w \in W^K$. Hence $$ \tilde \nu(Q_K) \subset \{\tilde{w} \in \tW_{I^\flat} \tW_{I^\sharp} \mid  r^\flat (r \i)^\sharp \leI \tilde{w}  \text{ for some } r \in W^K\}.$$
	
	 Let $\tilde{w} \in \tW_{I^\flat} \tW_{I^\sharp}$ with $r^\flat (r \i)^\sharp \leI \tilde{w}  \text{ for some } r \in W^K$. We write $\tilde{w} = x y^{-1}$ with $x \in \tW_{I^\flat} $, $y \in  \tW_{I^\sharp} \cap \tW^{I^\flat}$.
	 By Proposition~\ref{prop:leJ}, there exists $u \in W_{I^\flat}$ such that $x \le r^\flat u $ and $r^\sharp u \le y$. Therefore we must have $u \in \tilde{W}_{I^\flat} \cap \tilde{W}_{I^\sharp}$, that is, $u^\natural \in W_K$. So $x^\natural \le r u^\natural \le y^\natural $. Hence we have  $(x^\natural, y^\natural ) \in Q_K$ and $\tilde{\nu} ((x^\natural, y^\natural )) = \tilde{w}$. This shows that $$ \tilde \nu(Q_K) \supset \{\tilde{w} \in \tW_{I^\flat} \tW_{I^\sharp} \mid  r^\flat (r \i)^\sharp \leI \tilde{w}  \text{ for some } r \in W^K\}.$$
	 
	 We show (3). By (2), it suffices to show the set $\tW_{I^\flat} \tW_{I^\sharp}$ is closed in  $(\tilde{W}, \leI)$. Then the argument in the last paragraph of \S\ref{sec:geo} applies.  Let $\tilde{w} \leI x y^{-1}$ with  $x \in \tW_{I^\flat} $, $y \in  \tW_{I^\sharp} \cap \tW^{I^\flat}$. We write $\tilde{w} = x' (y')^{-1}$ with  $x' \in \tW_{I^\flat} $, $y' \in  \tW^{I^\flat}$. By Proposition~\ref{prop:leJ}, there exists $u \in W_{I^\flat}$ such that $x \le x'u$ and $y' u \le y$. Then $y' \le y$ and hence $y' \in \tW_{I^\sharp}$. So $\tilde{w} \in \tW_{I^\flat} \tW_{I^\sharp}$.
%%%%%%%%%%%%%%%%%%%%%%%%%%%%%%%%%%%%%%%%%%%%%%%%%%%%%
%%%%%%%%%%%%%%%%%%%%%%%%%%%%%%%%%%%%%%%%%%%%%%%%%%%%%

%%%%%%%%%%%%%%%%%%%%%%%%%%%%%%%%%%%%%%%%%%%%%%%%%%%%%
%%%%%%%%%%%%%%%%%%%%%%%%%%%%%%%%%%%%%%%%%%%%%%%%%%%

\subsection{Thinness} \label{sec:thin}
Dyer proved in \cite[Proposition 2.5]{Dyer} that the poset $(\tilde{W}, \leI)$ is thin. We have shown in Proposition~\ref{prop:tW} that the poset $(Q_K, \preceq)$ can be identified with a convex subset of  the poset $(\tilde{W}, \leI)$. So $Q_K$ is thin. 

As to the rank $2$ intervals involving $\hat{0}$, one can follow the same proof (the last paragraph) as in \cite[Proof of Theorem~1.1]{LW}. We shall omit the details here.
%%%%%%%%%%%%%%%%%%%%%%%%%%%%%%%%%%%%%%%%%%%%%%%%%%%%%
%%%%%%%%%%%%%%%%%%%%%%%%%%%%%%%%%%%%%%%%%%%%%%%%%%%%%

%%%%%%%%%%%%%%%%%%%%%%%%%%%%%%%%%%%%%%%%%%%%%%%%%%%%%
%%%%%%%%%%%%%%%%%%%%%%%%%%%%%%%%%%%%%%%%%%%%%%%%%%%
\subsection{Reflection orders}
In order to prove the EL-shellability, we first recall the reflection orders introduced by Dyer in \cite[Definition~2.1]{Dyer1}. Let $T$ be the set of reflections in $\tilde{W}$. A total order $\preceq$ on $T$ is called a {\it reflection order} if for any $s, t \in T$, either $s \prec tst \prec  tstst \prec \cdots$ or $t \prec sts \prec  ststs \prec \cdots$.

For any covering relation $w \gtrdot w'$, we label this edge by the reflection $w (w') \i \in T$. Dyer proved in \cite[Proposition~3.9]{Dyer} (taking $I =\emptyset$ and $J=S$ in {\it loc.cit.}) that any reflection order on $T$ gives an EL-labelling on $(\tW, \leI)$. In particular, the poset $(\tW, \leI)$ is EL-shellable.

Now we take the reflection order $\preceq$ on $T$ such that $\tilde{W}_{I^\flat} \cap T$ is a final section, i.e., for any $t, t' \in T$ with $t \in \tilde{W}_{I^\flat}$ and $t' \notin \tilde{W}_{I^\flat}$, we have $t'\prec t$. The existence of such reflection order was established by Dyer in \cite[Proposition~2.3]{Dyer1}. We define the augmented totally order set $\hat{T} = T \sqcup \{\bot\}$ where $a \prec \bot \prec b$ for any $a \in T - \tilde{W}_{I^\flat}$ and $b \in \tilde{W}_{I^\flat} \cap T$.

Recall we can identify $Q_K$ with a  convex subset of $\tilde{W}$. The edge labelling on $\tilde{W}$ defined above induces an edge labelling on $Q_K$. For any edge of $\hat{Q}_K$ involving $\hat{0}$, we label the edge by $\bot \in \hat{T}$. This gives a labelling on all the edges in $\hat{Q}_K$.

\subsection{EL-shellability}\label{sec:EL}
In this subsection, we show that the edge labelling  on $\hat{Q}_K$ defined above is an EL-labelling. 
We have already seen that the edge labelling above on $(\tilde{W}, \leI)$ is an EL-labelling. 

Let $[x,y]$ be an interval in $\hat{Q}_K$. 

We first consider the case $x \neq \hat{0}$.  By Proposition~\ref{prop:tW}, the poset $Q_K$ can be identified with a convex subset of $(\tilde{W}, \leI)$. Then $\tilde{\nu}([x,y])$ is an interval in $\tilde{W}$. The claim follows in this case.

Now we consider the interval $[\hat{0}, y]$. Similar to the argument in the last two paragraphs of \cite[Proof of Theorem 1.2]{LW}, for any $(v, w) \in Q_K$, any lexicographically minimal chain from $(v, w)$ to $\hat{0}$ does not involve edges labelled by $\tilde{W}_{I^\flat}\cap T$ and must have $(r,r) \succ \hat{0}$ as the last two terms, where $r=\min(v W_J)$. The claim for the interval  $[\hat{0}, (v, w)]$ in $\hat{Q}_K$ now follows from the claim of the interval $[(r,r), (v,w)]$ in $(Q_k, \preceq)$ proved above.

%$[r^\flat (r \i)^\sharp, v^\flat (w \i)^\sharp]$ in $(\tilde{W}, \leI)$. 

\subsection{Comparison with the work \cite{LW}} 

It is worth pointing out that the edges labelled by $T \cap \tilde{W}_{I^\flat}$ and $\bot$ corresponds to the edges of type 2 and type 3 in the sense of \cite[Corollary 6.4]{LW} respectively. Moreover, we may choose a reflection order on $T$ so that its restriction to $T \cap \tilde{W}_{I^\flat}$ matches with the order used for the type 2 edges in \cite{LW}. However, it is not clear how to match $T\backslash \tilde{W}_{I^\flat}$ with the labeling on the type 1 edges in \cite{LW}. 

%%%%%%%%%%%%%%%%%%%%%%%%%%%%%%%%%%%%%%%%%%%%%%%%%%%%%
%%%%%%%%%%%%%%%%%%%%%%%%%%%%%%%%%%%%%%%%%%%%%%%%%%%%%

%%%%%%%%%%%%%%%%%%%%%%%%%%%%%%%%%%%%%%%%%%%%%%%%%%%%%
%%%%%%%%%%%%%%%%%%%%%%%%%%%%%%%%%%%%%%%%%%%%%%%%%%%

\section{A different Birkhoff-Bruhat atlas for $W_K$ finite case}\label{sec:2nd-atlas}
In this section we construction a different Birkhoff-Bruhat atlas for $\CP_K$ when $K$ is of finite type. Let $w_K$ be the longest element of the finite Weyl group $W_K$. We first construct the atlas group $\breve{G}$. We list some examples of such construction in \S\ref{sec:example}.
%We fix a pinning of $G$ analogous to \cite{Lu94}, which include one-parameter subgroups $x_i: \kk \rightarrow U_{\alpha_i}$ and $y_i: \kk \rightarrow U_{-\alpha_i}$ for $i \in I$.
%Let $\Psi$ be the anti-involution on $G$ analogous to \cite[\S1.2]{Lu94} such that $\Psi(x_{\alpha_i}(a)) =y_{\alpha_i}(a)$, $\Psi(y_{\alpha_i}(a)) =x_{\alpha_i}(a)$, $\Psi(t) =t$, for $a \in \kk$. 

Let $\breve I$ be the union of two copies of $I$, glued along the $(-w_K)$-graph automorphism of $K$. More precisely, let $I^\flat$ and $I^\sharp$ be the two copies of $I$. Then $\breve I=I^\flat \cup I^\sharp$ with $I^\flat \cap I^\sharp=\{j^\flat; j \in K\}=\{j^\sharp; j \in K\}$. For $j, j' \in K$, $j^\flat=(j')^\sharp$ if and only if $\a_{j'}=-w_K(\a_j)$.  Define ${}^{\natural}: I^\flat \to I, (i^\flat)^{\natural}=i$ and ${}^{\natural'}: I^\sharp \to I, (i^\sharp)^{\natural'}=i$. Then for $\breve i \in I^\flat \cap I^\sharp$, $\breve i^\natural$ and $\breve i^{\natural'}$ are both contained in $K$ and $\a_{\breve i^{\natural'}}=-w_{ K}(\a_{\breve i^{\natural}})$. 

The generalized Cartan matrix $\breve A=(\breve a_{\breve i, \breve i'})_{\breve i, \breve i' \in \breve I}$ is defined as follows: 
\begin{itemize}
	\item for $\breve i, \breve i' \in I^\flat$, $\breve a_{\breve i, \breve i'}=a_{\breve i^\natural, (\breve i')^\natural}$; 
	
	\item for $\breve i, \breve i' \in I^\sharp$, $\breve a_{\breve i, \breve i'}=a_{\breve i^\natural, (\breve i')^\natural}$; 
	
	\item for $\breve i \in \breve I-I^\flat$ and $\breve i' \in \breve I-I^\sharp$, $\breve a_{\breve i, \breve i'}=\breve a_{\breve i', \breve i}=0$. 
\end{itemize}

Note that for $j_1, j'_1, j_2, j'_2 \in K$ with $\a_{j'_1}=-w_K(\a_{j_1})$ and $\a_{j'_2}=-w_K(\a_{j_2})$, we have $a_{j_1, j_2}=a_{j'_1, j'_2}$. Thus for $\breve i, \breve i' \in I^\flat \cap I^\sharp$, we have $a_{\breve i^\natural, (\breve i')^{\natural}}=a_{\breve i^{\natural'}, (\breve i')^{\natural'}}$ and the generalized Cartan matrix $\breve A$ is well-defined.

Let $\breve G$ be the minimal Kac-Moody group of simply connected type  associated to $(\breve I, \breve A)$  and $\breve W$ be its Weyl group. Let $\breve W_{I^\flat}$ and $\breve W_{I^\sharp}$ be the parabolic subgroup of $\breve W$ generated by simple reflections in $I^\flat$ and $I^\sharp$ respectively. We have natural identifications $W \to \breve W_{I^\flat}, w \mapsto w^\flat$ and $W \to \breve W_{I^\sharp}, w \mapsto w^\sharp$. For $w \in W_J$, $w^\flat=w_J w^\sharp w_J \i$. For any $w \in W$, we set $(w^\flat)^\natural=w$ and $(w^\sharp)^{\natural'}=w$. 
 
Similarly to Section~\ref{sec:BB}, we have natural embeddings $G  \to \breve L_{I^\flat}, g \mapsto g^\flat$ and $G  \to \breve L_{I^\sharp}, g \mapsto g^\sharp$.

\begin{lem}\label{lem:springer}
Let $L_{K, \rm{der}}$ be the derived group of $L_K$. Then for any $g \in L_{K, \rm{der}}$, 
\[
(g^\flat)^{-1}=\Psi(\dot w_K   g \dot w_K \i )^\sharp.
\]

\end{lem}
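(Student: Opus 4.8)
The plan is to prove the identity by reducing it, after two formal manipulations, to a single classical fact about the longest element $w_K$, so I begin by noting that both sides are anti-homomorphisms of $L_{K,\mathrm{der}}$ into $\breve G$ in the variable $g$. Indeed $g \mapsto (g^\flat)^{-1}$ is anti-multiplicative because $\flat$ is a homomorphism and inversion reverses products, while $g \mapsto \Psi(\dot w_K g \dot w_K^{-1})^\sharp$ is the composite of the conjugation homomorphism, the anti-homomorphism $\Psi$, and the homomorphism $\sharp$, hence again anti-multiplicative. Since $L_{K,\mathrm{der}}$ is generated by the one-parameter subgroups $x_j(a), y_j(a)$ with $j \in K$, it therefore suffices to verify the identity on these generators.

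Next I would isolate the two structural inputs. From $\dot s_j = x_j(-1)y_j(1)x_j(-1)$ and the definition of $\Psi$ one gets the rank-one identity $\Psi(\dot s_j) = \dot s_j^{-1}$; multiplying along a reduced word this yields $\Psi(\dot w_K) = \dot w_K^{-1}$, and hence $\Psi(\dot w_K g \dot w_K^{-1}) = \dot w_K \Psi(g)\dot w_K^{-1}$. On the other hand, the gluing identifies the node $j^\sharp$ with $(j')^\flat$, where $j'$ is defined by $\alpha_{j'} = -w_K(\alpha_j)$; since the embeddings $\flat,\sharp$ carry the pinning of $G$ to that of $\breve G$, this gives $h^\sharp = (\tau(h))^\flat$ for every $h \in L_{K,\mathrm{der}}$, where $\tau$ is the diagram automorphism of $L_{K,\mathrm{der}}$ attached to the opposition involution $j \mapsto j'$. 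This is the group-level lift of the stated relation $w^\flat = w_K w^\sharp w_K^{-1}$.

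Combining these observations and using injectivity of $\flat$, the asserted identity becomes the intrinsic statement $\tau(\dot w_K \Psi(g)\dot w_K^{-1}) = g^{-1}$ in $L_{K,\mathrm{der}}$. Writing $\Psi(g) = \omega(g^{-1})$ for the Chevalley involution $\omega$ (so that $\omega(x_j(a)) = y_j(-a)$ and $\omega(y_j(a)) = x_j(-a)$), this is in turn equivalent to $\mathrm{Int}(\dot w_K)\circ\omega = \tau$, i.e. to the classical realization of the opposition automorphism of the semisimple group $L_{K,\mathrm{der}}$ as conjugation by $\dot w_K$ followed by the Chevalley involution. Unwinding on the generators, the whole statement comes down to the two sign assertions $\dot w_K x_j(a)\dot w_K^{-1} = y_{j'}(-a)$ and $\dot w_K y_j(a)\dot w_K^{-1} = x_{j'}(-a)$ for $j \in K$.

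The main obstacle is exactly these signs. That $\dot w_K x_j(a)\dot w_K^{-1}$ lies in $U_{-\alpha_{j'}}$, and so equals $y_{j'}(c\,a)$ for some $c \in \kk^\times$, is immediate from $w_K(\alpha_j) = -\alpha_{j'}$; the content is that $c = -1$. I would extract this from the rank-one computation $\dot s_j x_j(a)\dot s_j^{-1} = y_j(-a)$ together with the standard $\pm 1$-valued sign rules for conjugating root subgroups by $\dot w$ (a cocycle on $W$ determined by the pinning), evaluated along a reduced expression of $w_K$; equivalently one may simply quote the classical description of the opposition automorphism $\tau$ as $\mathrm{Int}(\dot w_K)\circ\omega$ in the conventions of \cite{Lu94}. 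A purely inductive factorization $\dot w_K = \dot{w''}\dot s_j$ with $\ell(w'') = \ell(w_K)-1$ reduces the computation to a $\dot{w''}$-conjugation but requires the general sign cocycle rather than only the longest-element case, so this is the step I expect to demand the most care. Granting $c = -1$ and its companion $\dot w_K y_j(a)\dot w_K^{-1} = x_{j'}(-a)$, the two generator identities hold, whence $\mathrm{Int}(\dot w_K)\circ\omega = \tau$, and the lemma follows.
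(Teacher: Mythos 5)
Your reduction is correct and follows the same skeleton as the paper's (unstated but implicit) framework: both sides of the identity are anti-homomorphisms in $g$, so it suffices to check it on the generators $x_j(a)$, $y_j(a)$ with $j\in K$, and after using $\Psi(\dot w_K)=\dot w_K^{-1}$ and the gluing identity $h^\sharp=(\tau(h))^\flat$ the lemma becomes exactly the two sign statements $\dot w_K x_j(a)\dot w_K^{-1}=y_{j'}(-a)$ and $\dot w_K y_j(a)\dot w_K^{-1}=x_{j'}(-a)$, where $\alpha_{j'}=-w_K(\alpha_j)$. The gap is that you never prove these statements, and they are the entire content of the lemma. You offer two escape routes: (i) evaluate ``the standard $\pm1$-valued sign rules'' along a reduced expression of $w_K$ --- this is not carried out, and you yourself flag it as the step demanding the most care; (ii) quote the classical identity $\Int(\dot w_K)\circ\omega=\tau$ ``in the conventions of \cite{Lu94}'' --- but whether this identity holds with the sign $-1$ (rather than some other $\pm1$ pattern) depends on the precise normalization $\dot s_j=x_j(-1)y_j(1)x_j(-1)$, and verifying that a quoted convention matches the present one is precisely the kind of check this lemma exists to record. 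As written, the crux is assumed rather than proved.

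The gap is easy to close, and the paper's proof shows how, using only an ingredient already implicit in your sketch. With $j'$ the opposition image of $j$, we have $(s_{j'}^{-1}w_K)(\alpha_j)=s_{j'}(-\alpha_{j'})=\alpha_{j'}$, and since $\ell(s_{j'}^{-1}w_K)=\ell(w_K)-1$, the element $\dot s_{j'}^{-1}\dot w_K$ is the canonical representative of $s_{j'}^{-1}w_K$. By \cite[Proposition~9.3.5]{Springer}, conjugation by a canonical representative carrying a simple root to a simple root preserves the pinned root subgroups with \emph{no} sign; hence $\dot w_K x_j(a)\dot w_K^{-1}=\dot s_{j'}\,x_{j'}(a)\,\dot s_{j'}^{-1}=y_{j'}(-a)$, the entire sign coming from the single rank-one computation. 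Note also that your own right-handed factorization $\dot w_K=\dot w''\dot s_j$ with $w''=w_Ks_j$ does \emph{not} require the general sign cocycle, contrary to your worry: $w''(\alpha_j)=w_K(-\alpha_j)=\alpha_{j'}$ is again a simple root sent to a simple root, so the same proposition applies to give $\dot w_K x_j(a)\dot w_K^{-1}=\dot w''\,y_j(-a)\,(\dot w'')^{-1}=y_{j'}(-a)$. With this one citation in place of the appeal to ``classical facts,'' your argument is complete and coincides in substance with the paper's.
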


\begin{proof}
Since $g \in L_{K, \rm{der}}$, it suffices to prove that for $i \in K$, 
\[
((x_{\alpha_i}(a))^\flat)^{-1}=\Psi(\dot w_K   x_{\alpha_i}(a) \dot w_K  \i )^\sharp, \quad ((y_{\alpha_i}(a))^\flat)^{-1}=\Psi(\dot w_K  y_{\alpha_i}(a) \dot w_K \i)^\sharp.
\]
We prove the first identity. The second identity is proved in the same way. 

Let $j \in K$ with $w_K (\alpha_i) = -\alpha_j$. Then $(s_j \i w_K )(\alpha_i ) = \alpha_j$. 
It follows from \cite[Proposition~9.3.5]{Springer} and direct computations that
\begin{align*}
\dot w_K x_{\a_i}(a) \dot w_K \i= \dot{s}_j   (\dot{s}_j \i \dot w_K  ) x_{\alpha_i}(a) (\dot{s}_j \i \dot w_K  )^{-1} \dot{s}_j \i = \dot{s}_j  x_{\alpha_j}(a)  \dot{s}_j \i= y_{\alpha_j}(-a).
\end{align*}
The lemma is proved. 
\end{proof}

We denote by $\breve \CB$ the flag variety of $\breve G $, and denote by ${}^{I^\flat} \mathring{\breve X}^-$, ${}^{I^\flat}  \mathring{\breve X}_-$, ${}^{I^\flat}  \mathring{\breve R}_{-, -}$ the $I^\flat$-Schubert cells, the opposite $I^\flat$-Schubert cells and the open $I^\flat$-Richardson varieties in $\breve{\CB}$ respectively. 

\begin{theorem}\label{thm:BBatlas2}
	Suppose that $W_K$ is finite. For any $r \in W^K$, define the map 
\begin{align*}
	\breve{c}_r: U_r &\longrightarrow \breve X, \\
	g  \dot r P^+_K/P^+_K &\mapsto \s^{\flat}_{r, -}(g) \cdot  \dot r^\flat ( \dot w_K)^\flat ( \dot r^{-1})^\sharp \cdot \Psi(\s^\sharp_{r, +}(g)) \cdot \breve B^+/\breve B^+ \text{ for } g \in  \dot r U_{P^-_K}  \dot  r \i.
	%(\s_{r, -}(g) r w_J)^{\flat} \bigl(\iota(\s_{r, +}(g) r) \i \bigr)^{\sharp} \breve B^+/\breve B^+, \quad g \in r U_{P^-_J}  r \i.
\end{align*}
Then $(\breve{c}_r)_{r \in W^K}$ gives a Birkhoff-Bruhat atlas for $\CP_K$. 
\end{theorem}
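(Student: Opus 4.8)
The plan is to run the argument in exact parallel with the proof of Theorem~\ref{thm:BBatlas1}, the single essential new ingredient being Lemma~\ref{lem:springer}, which plays the role that the naive identification $l^\flat=l^\sharp$ played there. Compared with Theorem~\ref{thm:BBatlas1}, the formula for $\breve c_r$ differs by three simultaneous changes: the $\flat$-factor uses $\sigma_{r,-}$ in place of $\sigma_{r,+}$, the $\sharp$-factor is $\Psi(\sigma_{r,+}(g))$ in place of $\sigma_{r,-}(g)^{-1}$, and a factor $(\dot w_K)^\flat$ is inserted. Since $\Psi$ interchanges $U^+$ and $U^-$ and satisfies $\Psi(\dot w)=\dot w^{-1}$ for our pinning, the net effect is to replace the opposite $I^\flat$-Schubert picture of Theorem~\ref{thm:BBatlas1} by the $I^\flat$-Schubert picture: I expect $\breve c_r$ to embed $U_r$ onto an $I^\flat$-Schubert cell, with the projected Richardson strata pulled back from the opposite $I^\flat$-Schubert cells. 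The $(-w_K)$-twist in the gluing is encoded throughout by the relation $w^\flat=w_K w^\sharp w_K^{-1}$ on $W_K$, i.e.\ by Lemma~\ref{lem:springer}.

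First I would note that every point of $U_r$ is uniquely of the form $g\dot r P^+_K/P^+_K$ with $g\in\dot r U_{P^-_K}\dot r^{-1}$, so $\breve c_r$ is unambiguously defined, and that $\sigma_{r,-}(g)\in U^-$ while $\Psi(\sigma_{r,+}(g))\in\Psi(U^+)=U^-$, so both moving factors land on the ``$U^-$-side''. This matches the first factor $U^-_{I^\flat}$ in the parametrization of an $I^\flat$-Schubert cell in Lemma~\ref{lem:JXw} and Corollary~\ref{cor:JXw}. Using $r\in W^K$ to rewrite $U^+\cap\dot r U_{P^-_K}\dot r^{-1}=U^+\cap\dot r U^-\dot r^{-1}$, and combining Lemma~\ref{lem:u} with Corollary~\ref{cor:JXw}, I would identify $\breve c_r(U_r)$ with the $I^\flat$-Schubert cell ${}^{I^\flat}\mathring{\breve X}^{\breve\nu(r,r)}$, whose base point $g=e$ reads off $\breve\nu(r,r)=(rw_K)^\flat(r^{-1})^\sharp$; by Proposition~\ref{prop:LL1} this cell lies in $\breve L_{I^\flat}\breve L_{I^\sharp}\breve B^+/\breve B^+$. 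Injectivity and the embedding property then follow from the uniqueness clause of Corollary~\ref{cor:JXw}, exactly as in Theorem~\ref{thm:BBatlas1}.

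For the stratification I would repeat the Bruhat bookkeeping of Theorem~\ref{thm:BBatlas1}. If $g\dot r P^+_K/P^+_K\in\mathring\Pi_{v,w}$, there is $l\in L_K$ with $g\dot r l\in B^+\dot w B^+\cap B^-\dot v B^+$, whence $\sigma_{r,-}(g)\dot r l\in B^+\dot w B^+$ and $\sigma_{r,+}(g)\dot r l\in B^-\dot v B^+$. Applying $\Psi$ to the second membership turns it into a statement about $\Psi(\sigma_{r,+}(g))$ in a cell indexed by $v^{-1}$; pushing the first membership through $\flat$ and the $\Psi$-transformed second through $\sharp$, I would show that $\breve c_r(g\dot r P^+_K/P^+_K)$ lies in the opposite $I^\flat$-Schubert cell ${}^{I^\flat}\mathring{\breve X}_{\breve\nu(v,w)}$ determined by $(v,w)$. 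Intersecting with the image from the previous step gives the open $I^\flat$-Richardson variety. As in Theorem~\ref{thm:BBatlas1}, comparing the cover $\breve c_r(U_r)=\bigcup_{(v,w)}\breve c_r(U_r\cap\mathring\Pi_{v,w})$ with the opposite $I^\flat$-Schubert decomposition of $\breve L_{I^\flat}\breve L_{I^\sharp}\breve B^+$ from Proposition~\ref{prop:LL} (via \eqref{eq:BD1}) forces every inclusion to be an equality, yielding $\breve c_r(U_r\cap\mathring\Pi_{v,w})={}^{I^\flat}\mathring{\breve R}_{\breve\nu(v,w),\,\breve\nu(r,r)}$ and hence the desired atlas.

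The main obstacle is the cancellation of the coset ambiguity $l\in L_K$ between the two copies, immediate in Theorem~\ref{thm:BBatlas1} from $l^\flat=l^\sharp$ but genuinely twisted here. Inserting $l$ produces a factor $(l^{-1}\dot w_K)^\flat$ on the $\flat$-side and, after applying $\Psi$, a factor $(\Psi(l)^{-1})^\sharp$ on the $\sharp$-side, both lying in the glued Levi $\breve L_{I^\flat\cap I^\sharp}$. This is precisely where Lemma~\ref{lem:springer} enters: together with $\Psi(\dot w)=\dot w^{-1}$ it rewrites these two factors in a common copy and collapses the entire $l$-dependence to the single $l$-free element $(\dot w_K)^\sharp=(\dot w_K)^\flat$, so that the $(\dot w_K)^\flat$ built into the formula is exactly the factor demanded by the twisted gluing. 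Checking this collapse — including the preliminary reduction of $l$ to $L_{K,\mathrm{der}}$ modulo a torus factor that is harmless modulo $\breve B^+$, which is the hypothesis under which Lemma~\ref{lem:springer} literally applies — is the one genuinely new computation; the remaining cell-index bookkeeping is formally dual to the $(\heartsuit)$-step in the proof of Theorem~\ref{thm:BBatlas1}.
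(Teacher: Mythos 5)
Your proposal is correct and is essentially the paper's own proof: you identify $\breve c_r(U_r)$ with the $I^\flat$-Schubert cell ${}^{I^\flat}\mathring{\breve{X}}^{\breve{\nu}(r,r)}$ with $\breve{\nu}(r,r)=(rw_K)^\flat(r^{-1})^\sharp$ via Lemma~\ref{lem:u} and Corollary~\ref{cor:JXw}, push each stratum $U_r\cap\mathring{\Pi}_{v,w}$ into an opposite $I^\flat$-Schubert cell using the memberships $\sigma_{r,-}(g)\dot r l\in B^+\dot w B^+$, $\sigma_{r,+}(g)\dot r l\in B^-\dot v B^+$, the anti-involution $\Psi$, and Lemma~\ref{lem:springer} to cancel the $L_K$-coset ambiguity, and then let the disjointness of the opposite $I^\flat$-Schubert decomposition force all inclusions to be equalities --- exactly the paper's three steps. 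The only deviations are bookkeeping: the paper normalizes at the outset, choosing $l\in L_{K,\mathrm{der}}$ with $g\dot r l\in B^+\dot w B^+\cap B^-\dot v U_{P^+_K}$, which makes your a posteriori torus reduction and the absorption of the extra $\breve{U}^-_{I^\flat\cap I^\sharp}$-factor across $(\dot w_K)^\flat$ unnecessary (both of which do go through), and one descriptive slip is worth noting: after moving the Weyl-group factors, your $\sharp$-factor $\Psi(\sigma_{r,+}(g))$ corresponds to the second, $U_{P^+_{I^\flat}}$-conjugated factor in Corollary~\ref{cor:JXw}, not to the ``$U^-_{I^\flat}$-side'' as you describe.
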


\begin{proof}
We have 
\[
 \s^{\flat}_{r, -}( \dot r U_{P^-_K}   \dot r \i) = (U^-)^\flat \cap  \dot r^\flat (U_{P^-_K})^\flat  ( \dot r \i)^\flat =  (U^-)^\flat \cap ( \dot r \dot w_K)^\flat (U^-)^\flat  ( \dot w_K  \dot r \i)^\flat
\]
and
\begin{align*}
& \dot r^\flat ( \dot w_K)^\flat ( \dot r^{-1})^\sharp \cdot \iota\Big(\s^\sharp_{r, +}( \dot r U_{P^-_K}  \dot r \i)\Big)^{-1}  \cdot   \dot r^\sharp( \dot w_K)^\flat ( \dot r^{-1})^\flat \\
= &  \dot r^\flat ( \dot w_K)^\flat ( \dot r^{-1})^\sharp \cdot \Big( (U^{-})^\sharp \cap ( \dot r U_{P^+_K} \dot  r \i)^\sharp \Big)   \cdot   \dot r^\sharp( \dot w_K)^\flat ( \dot r^{-1})^\flat \\
= &  \dot r^\flat ( \dot w_K)^\flat \cdot    \Big( ( \dot r \i U^{-} \dot  r  )^\sharp \cap ( U_{P^+_K} )^\sharp \Big)     \cdot  ( \dot w_K)^\flat ( \dot r^{-1})^\flat.
\end{align*}
Now it follows from Corollary~\ref{cor:JXw} that $\breve{c}_r$ is an embedding with image ${}^{I^\flat} \!  \mathring{\breve X}^{\breve \nu(r, r)}$.

	Let $(v, w) \in Q_K$. Suppose that $g \in  \dot r U_{P^-_K}  \dot r \i$ with $g  \dot r P^+_K/P^+_K \in \mathring{\Pi}_{v , w}$. Then there exists $l \in L_{K,\rm{der}}$ such that $g \dot r  l \in B^+ \dot w B^+ \cap B^- \dot v   U_{P^+_{K}}$. 
	
	 By \eqref{eq:sigma-} and \eqref{eq:sigma+}, we have 
	\[
	\s_{r, +}(g) \dot r  l   \in B^- g \dot r l    \subset B^- \dot v  U_{P^+_{K}}, \qquad \s_{r, -}(g) \dot r l   \in B^+ g \dot r l   \subset B^+ \dot w B^+.
	\]
	Set $l' =\dot w_K \i l \dot w_K \in L_{K,\rm{der}}$.   Then $ ((l')^\flat)^{-1} = \Psi( \dot{w}_K  l' \dot{w}_K \i)= \Psi(l)^\sharp$ by Lemma~\ref{lem:springer}. 
	
	So 
	\begin{align*}
	&\s^{\flat}_{r, -}(g) \cdot  \dot r^\flat (\dot{w}_K)^\flat ( \dot r^{-1})^\sharp \cdot \Psi(\s^\sharp_{r, +}(g))  \cdot \breve B^+/\breve B^+ \\
	& = (\s^{\flat}_{r, -}(g) \dot  r \dot{w}_K l')^{\flat} \cdot \bigl(\Psi(\s^\sharp_{r, +}(g)  \dot r l   ) \bigr)^{\sharp}  \cdot \breve B^+/\breve B^+ \\
	& =  (\s^{\flat}_{r, -}(g)  \dot r  l   \dot{w}_K )^{\flat} \cdot   \bigl(\Psi(\s^\sharp_{r, +}(g)  \dot r l  ) \bigr)^{\sharp}  \cdot \breve B^+/\breve B^+ \\
 	&\in   (U^+)^\flat \cdot    \dot w^\flat  \cdot  (U^+)^\flat \cdot  ( \dot w_K )^{\flat} \cdot  (U_{P^-_{K}})^\sharp \cdot ( \dot v^{-1})^\sharp \cdot  (U^+)^\sharp \cdot \breve B^+/\breve B^+ \\
	&=  (U^+)^\flat \cdot    \dot w^\flat  \cdot  (U_{P^+_K})^\flat \cdot  ( \dot w_K )^{\flat} \cdot  (U_{P^-_{K}})^\sharp \cdot ( \dot v^{-1})^\sharp \cdot  (U^+)^\sharp \cdot \breve B^+/\breve B^+ \\
	& \subset  (U^+)^\flat \cdot    \dot w^\flat  \cdot  U_{P^-_{I^\flat}} \cdot  ( \dot w_K )^{\flat} \cdot  U_{P^+_{I^\sharp}} \cdot ( \dot v^{-1})^\sharp \cdot  (U^+)^\sharp \cdot \breve B^+/\breve B^+ \\
	&= {}^{I^\flat} B^-  \cdot    \dot w^\flat  \  \cdot  ( \dot w_K )^{\flat}   \cdot ( \dot v^{-1})^\sharp \cdot   \cdot \breve B^+/\breve B^+.
	\end{align*}
	
	By Proposition \ref{prop:LL1}, 
	\begin{align*} 
	\breve{c}_r(U_r) &=\cup_{(v, w) \in Q_K; v \le w} \breve{c}_r(U_r \cap \mathring{\Pi}_{v, w}) \subset \cup_{(v,  w) \in Q_K; v   \le w} {}^{I^\flat}\mathring{\breve R}_{(w w_J)^\flat (v \i)^\sharp, (r w_J)^\flat (r \i)^\sharp} \\ 
	& \subset {}^{I^\flat} \mathring{\breve X}^{\breve \nu(r, r)}.
	\end{align*}
	
	Since $\breve{c}_r(U_r)={}^{I^\flat} \mathring{\breve X}^{\breve \nu(r, r)}$, all the inclusions above are actually equalities. In particular, we have 
	\[
	\breve{c}_r(U_r \cap \mathring{\Pi}_{v, w})={}^{I^\flat}\mathring{\breve R}_{(w w_J)^\flat (v \i)^\sharp, (r w_J)^\flat (r \i)^\sharp}, \quad \text{ for } (v, w) \in Q_K.
	\]
	 The theorem is proved. 
\end{proof}

In the case where $W$ is finite, the multiplication by $\dot w_{I^\flat}$ interchanges the opposite Schubert cells with the $I^\flat$-Schubert cells, where $w_{I^\flat}$ is the longest element of $W_{I^\flat}$. Thus for $W$ finite case, the ``atlas model'' constructed here is essentially the same as the Bruhat atlas constructed in \cite{HKL}. They differ by the multiplication by $\dot w_{I^\flat}$. 

%%%%%%%%%%%%%%%%%%%%%%%%%%%%%%%%%%%%%%%%%%%%%%%%%%%%%%%%%%%%%%%%%%%%

%%%%%%%%%%%%%%%%%%%%%%%%%%%%%%%%%%%%%%%%%%%%%%%%%%%%%%%%%%%%%%%%%%%% 

\section{Some examples}\label{sec:example}

\begin{table}[H]
\begin{center}
\scalebox{0.8}{
\begin{tabular}{| c | c | c | c |}
\hline
$\CP_K$
&
$\tilde{G}$
&
$\breve{G}$
&
\text{Bruhat atlas}
\\
\hline
	\begin{tikzpicture}[baseline=0]
	\node  (a) at (0,0) {$\circ$};
	\node (b) at (1,0) {$\circ$};
	\draw (a.east) -- (b);
	\end{tikzpicture}
&
	\begin{tikzpicture}[baseline=0]
	\node (a) at (0,0.5) {\rotatebox{180}{$\triangledown$}};
	\node (b) at (1,0.5) {\rotatebox{180}{$\triangledown$}};
	\node (c) at (0,-0.5) {$\triangledown$};
	\node (d) at (1,-0.5) {$\triangledown$};
	\draw (a) -- (b);
	\draw (c) -- (d);
	\end{tikzpicture}
&
\begin{tikzpicture}[baseline=0]
	\node (a) at (0,0.5) {\rotatebox{180}{$\triangledown$}};
	\node (b) at (1,0.5) {\rotatebox{180}{$\triangledown$}};
	\node (c) at (0,-0.5) {$\triangledown$};
	\node (d) at (1,-0.5) {$\triangledown$};
	\draw (a) -- (b);
	\draw (c) -- (d);
	\end{tikzpicture}
&
\begin{tikzpicture}[baseline=0]
	\node (a) at (0,0.5) {\rotatebox{180}{$\triangledown$}};
	\node (b) at (1,0.5) {\rotatebox{180}{$\triangledown$}};
	\node (c) at (0,-0.5) {$\triangledown$};
	\node (d) at (1,-0.5) {$\triangledown$};
	\draw (a) -- (b);
	\draw (c) -- (d);
	\end{tikzpicture}
\\
\hline
$
\begin{tikzpicture}[baseline=0]
	\node(a) at (0,0) {$\circ$};
	\node (b) at (1,0) {$\bullet$};
	\draw (a) -- (b);
	\end{tikzpicture}
$
&
$
\begin{tikzpicture}[baseline=0]
	\node  (a) at (-1,1) {\rotatebox{180}{$\triangledown$}};
	\node  (b) at (0,0) {$\blacklozenge$};
	\node (c) at (-1,-1) {$\triangledown$};
	\draw (a) -- (b) -- (c);
	%\draw (0,0.5) node[yshift=1.5] {\rotatebox{180}{$\triangledown$}} -- (0,0) node  {$\blacklozenge$} -- (0,-0.5) node  {$\triangledown$};
	\end{tikzpicture}
$
&
$
\begin{tikzpicture}[baseline=0]
	\node  (a) at (-1,1) {\rotatebox{180}{$\triangledown$}};
	\node  (b) at (0,0) {$\blacklozenge$};
	\node (c) at (1,-1) {$\triangledown$};
	\draw (a) -- (b) -- (c);
	%\draw (0,0.5) node[yshift=1.5] {\rotatebox{180}{$\triangledown$}} -- (0,0) node  {$\blacklozenge$} -- (0,-0.5) node  {$\triangledown$};
	\end{tikzpicture}
$
&
$
\begin{tikzpicture}[baseline=0]
	\node  (a) at (-1,1) {\rotatebox{180}{$\triangledown$}};
	\node  (b) at (0,0) {$\blacklozenge$};
	\node (c) at (1,-1) {$\triangledown$};
	\draw (a) -- (b) -- (c);
	%\draw (0,0.5) node[yshift=1.5] {\rotatebox{180}{$\triangledown$}} -- (0,0) node  {$\blacklozenge$} -- (0,-0.5) node  {$\triangledown$};
	\end{tikzpicture}
$
\\
\hline
\begin{tikzpicture}[baseline=0]
	\node (a) at (0,0) {$\circ$};
	\node (b) at (1,0) {$\bullet$};
	\node (c) at (2,0) {$\circ$};
	\draw (a) -- (b) -- (c);
	\end{tikzpicture}

&
\begin{tikzpicture}[baseline=0]
	\node (a) at (-1,1){\rotatebox{180}{$\triangledown$}};
	\node (b) at (1,1) {\rotatebox{180}{$\triangledown$}};
	\node (c) at (0,0) {$\blacklozenge$};
	\node (d) at (-1,-1) {$\triangledown$};
	\node (e) at (1,-1)  {$\triangledown$};
	\draw (a) -- (c) -- (b);
	\draw (e) -- (c) -- (d);
	\end{tikzpicture}

&
\begin{tikzpicture}[baseline=0]
	\node (a) at (-1,1){\rotatebox{180}{$\triangledown$}};
	\node (b) at (1,1) {\rotatebox{180}{$\triangledown$}};
	\node (c) at (0,0) {$\blacklozenge$};
	\node (d) at (-1,-1) {$\triangledown$};
	\node (e) at (1,-1)  {$\triangledown$};
	\draw (a) -- (c) -- (b);
	\draw (e) -- (c) -- (d);
	\end{tikzpicture}
&
\begin{tikzpicture}[baseline=0]
	\node (a) at (-1,1){\rotatebox{180}{$\triangledown$}};
	\node (b) at (1,1) {\rotatebox{180}{$\triangledown$}};
	\node (c) at (0,0) {$\blacklozenge$};
	\node (d) at (-1,-1) {$\triangledown$};
	\node (e) at (1,-1)  {$\triangledown$};
	\draw (a) -- (c) -- (b);
	\draw (e) -- (c) -- (d);
	\end{tikzpicture}
\\
\hline
\begin{tikzpicture}[baseline=0]
	\node (a) at (0,0) {$\bullet$};
	\node (b) at (1,0) {$\circ$};
	\node (c) at (2,0) {$\bullet$};
	\draw (a) -- (b) -- (c);
\end{tikzpicture}
&
\begin{tikzpicture}[baseline=0]
	\node (a) at (0,1){\rotatebox{180}{$\triangledown$}};
	\node (b) at (-1,0) {$\blacklozenge$};
	\node (c) at (1,0) {$\blacklozenge$};
	\node (d) at (0,-1) {$\triangledown$};
	\draw (a) -- (b) -- (d) -- (c) -- (a);
	\end{tikzpicture}
&
\begin{tikzpicture}[baseline=0]
	\node (a) at (0,1){\rotatebox{180}{$\triangledown$}};
	\node (b) at (-1,0) {$\blacklozenge$};
	\node (c) at (1,0) {$\blacklozenge$};
	\node (d) at (0,-1) {$\triangledown$};
	\draw (a) -- (b) -- (d) -- (c) -- (a);
	\end{tikzpicture}
&
\begin{tikzpicture}[baseline=0]
	\node (a) at (0,1) {\rotatebox{180}{$\triangledown$}};
	\node (b) at (-1,0) {$\blacklozenge$};
	\node (c) at (1,0) {$\blacklozenge$};
	\node (d) at (0,-1) {$\triangledown$};
	\draw (a) -- (b) -- (d) -- (c) -- (a);
\end{tikzpicture}
\\
\hline
\begin{tikzpicture}[baseline=0]
	\node (a) at (0,0) {$\circ$};
	\node (b) at (1,0) {$\bullet$};
	\node (c) at (2,0) {$\bullet$};
	\draw (a) -- (b) -- (c);
\end{tikzpicture}
&
\begin{tikzpicture}[baseline=0]
	\node (a) at (0.5,1) {\rotatebox{180}{$\triangledown$}};
	\node (b) at (1,0) {$\blacklozenge$};
	\node (c) at (2,0) {$\blacklozenge$};
	\node (d) at (0.5,-1) {$\triangledown$};
	\draw (a) -- (b) -- (c);
	\draw (b) -- (d);
\end{tikzpicture}
&
\begin{tikzpicture}[baseline=0]
	\node (a) at (0.5,1) {\rotatebox{180}{$\triangledown$}};
	\node (b) at (1,0) {$\blacklozenge$};
	\node (c) at (2,0) {$\blacklozenge$};
	\node (d) at (2.5,-1) {$\triangledown$};
	\draw (a) -- (b) -- (c) -- (d);
\end{tikzpicture}
&
\begin{tikzpicture}[baseline=0]
	\node (a) at (0.5,1) {\rotatebox{180}{$\triangledown$}};
	\node (b) at (1,0) {$\blacklozenge$};
	\node (c) at (2,0) {$\blacklozenge$};
	\node (d) at (2.5,-1) {$\triangledown$};
	\draw (a) -- (b) -- (c) -- (d);
\end{tikzpicture}
\\
%%%%%%
\hline
\begin{tikzpicture}[baseline=0]
	\node (a) at (0,0) {$\circ$};
	\node (b) at (1,0) {$\bullet$};
	\node (c) at (2,0) {$\bullet$};
	\draw (a) -- node [above ] {$\scriptstyle\infty$} (b) -- (c);
\end{tikzpicture}
&
\begin{tikzpicture}[baseline=0]
	\node (a) at (0.5,1) {\rotatebox{180}{$\triangledown$}};
	\node (b) at (1,0) {$\blacklozenge$};
	\node (c) at (2,0) {$\blacklozenge$};
	\node (d) at (0.5,-1) {$\triangledown$};
	\draw (a) -- node [  right ] {$\scriptstyle\infty$} (b) -- (c);
	\draw (b) --node [ right] {$\scriptstyle\infty$}   (d);
\end{tikzpicture}
&
\begin{tikzpicture}[baseline=0]
	\node (a) at (0.5,1) {\rotatebox{180}{$\triangledown$}};
	\node (b) at (1,0) {$\blacklozenge$};
	\node (c) at (2,0) {$\blacklozenge$};
	\node (d) at (2.5,-1) {$\triangledown$};
	\draw (a) -- node [ left ] {$\scriptstyle\infty$} (b) -- (c);
	\draw (c) -- node [  left ] {$\scriptstyle\infty$}  (d);
\end{tikzpicture}
&
N/A
\\
\hline
\begin{tikzpicture}[baseline=0]
	\node (a) at (0,0) {$\bullet$};
	\node (b) at (1,0) {$\bullet$};
	\node (c) at (2,0) {$\circ$};
	\draw (a) -- node [above] {$\scriptstyle\infty$} (b) -- (c);

\end{tikzpicture}
&
\begin{tikzpicture}[baseline=0]
	\node (a) at (2.5,1) {\rotatebox{180}{$\triangledown$}};
	\node (b) at (1,0) {$\blacklozenge$};
	\node (c) at (2,0) {$\blacklozenge$};
	\node (d) at (2.5,-1) {$\triangledown$};
	\draw (b) --node [above] {$\scriptstyle\infty$}  (c);
	\draw (c) --  (a);
	\draw (c) -- (d);
\end{tikzpicture}
&
N/A
&
N/A
\\
\hline
\end{tabular}}
\end{center}
\end{table}

In this section, we provide some examples of the Dynkin diagrams of the atlas groups. Here $\tilde{G}$ and $\breve{G}$ are the atlas groups constructed in \S\ref{sec:BB} and \S\ref{sec:2nd-atlas} respectively and ``Bruhat atlas'' refers to the one constructed in \cite{HKL}. Note that even when $\tilde{G}$ and $\breve{G}$ are the same, the atlases constructed  in \S\ref{sec:BB} and \S\ref{sec:2nd-atlas} are totally different.

In the first column, we give the Dynkin diagram for the group $G$. The subset $K$ is the set of vertices filled with black color. In the other columns, the Dynkin diagrams consists of three types of vertices: $\triangle$, the vertices in $(I-J)^\sharp$; $\triangledown$, the vertices in $(I-J)^\flat$; $\blacklozenge$, the vertices in $I^\sharp \cap I^\flat=J^\sharp=J^\flat$. 

We also would like to point out that in certain cases, the atlas groups $\tilde{G}$ and $\breve{G}$ constructed in section \ref{sec:BB} and section \ref{sec:2nd-atlas} coincide. However, the Birkhoff-Bruhat atlases constructed there are still quite different in these cases, as one may see from the maps $\tilde{c}_{r}$ and $\breve{c}_r$ in Theorem~\ref{thm:BBatlas1} and Theorem~\ref{thm:BBatlas2} respectively. 
%%%%%%%%%%%%%%%%%%%%%%%%%%%%%%%%%%%%%%%%%%%%%%%%%%%%%%%%%%%%%%%%%%%%

%%%%%%%%%%%%%%%%%%%%%%%%%%%%%%%%%%%%%%%%%%%%%%%%%%%%%%%%%%%%%%%%%%%%
 
 %%%%%%%%%%%%%%%%%%%%%%%%%%%%%%%%%%%%%%%%%%%%%%%%%%%%%%%%%%%% %%%%%%%%%%%%%%%%%%%%%%%%%%%%%%%%%%%%%%%%%%%%%%

\end{document}